\numberwithin{equation}{section}
\newtheorem{thm}{Theorem}[section]
\newtheorem{cor}[thm]{Corollary}
\newtheorem{lem}[thm]{Lemma}
\newtheorem{prop}[thm]{Proposition}
\newtheorem{rmk}[thm]{Remark}
\newcommand{\pt}{\partial}
\DeclareMathOperator\arcsinh{arcsinh}
\begin{document}

\title[Existence of (Dirac-)harmonic Maps from Degenerating (Spin) Surfaces]{Existence of (Dirac-)harmonic Maps from  Degenerating (Spin) Surfaces}
\author{J\"urgen Jost, Jingyong Zhu}
\address{Max Planck Institute for Mathematics in the Sciences, Inselstrasse 22, 04103 Leipzig, Germany}
\email{jost@mis.mpg.de}
\address{Max Planck Institute for Mathematics in the Sciences, Inselstrasse 22, 04103 Leipzig, Germany}
\email{jizhu@mis.mpg.de}

\subjclass[2010]{53C43; 58E20}
\keywords{harmonic map; Dirac-harmonic map; $\alpha$-harmonic map; $\alpha$-Dirac-harmonic map; degenerating surfaces, energy identity; existence.}


\begin{abstract}
We study the existence of harmonic maps and Dirac-harmonic maps from degenerating surfaces to non-positive curved manifold via the scheme of Sacks and Uhlenbeck. By choosing a suitable sequence of $\alpha$-(Dirac-)harmonic maps from a sequence of suitable closed surfaces degenerating to a hyperbolic surface, we get the convergence and a cleaner energy identity under the uniformly bounded energy assumption. In this energy identity, there is no energy loss near the punctures. As an application, we obtain an existence result about (Dirac-)harmonic maps from degenerating (spin) surfaces. If the energies of the map parts also stay away from zero, which is a necessary condition, both the limiting harmonic map and Dirac-harmonic map are nontrivial.
\end{abstract}
\maketitle


\section{Introduction}  
The fundamental paper \cite{sacks1981existence} by Sacks and Uhlenbeck approached the theory of harmonic maps from a Riemann surface $M$ into a 
 Riemannian manifold $N$, that is 
 critical points $u:M\to N$  of the energy functional
\begin{equation}\label{1}
E(u)=\int_M|du|^2dvol_g,
\end{equation}
by perturbing the energy functional $E$ to the  functional 
\begin{equation}\label{2}
E_\alpha(u)=\int_M(1+|du|^2)^\alpha dvol_g,
\end{equation}
for $\alpha >1$ whose critical points are called $\alpha$-harmonic maps. In contrast to \eqref{1}, \eqref{2} satisfies a Palais-Smale condition so that standard methods apply. The difficult part then consists in controlling the limit $\alpha \to 1$. By studying this limit behavior of a sequence of $\alpha$-harmonic maps as $\alpha\searrow1$, they obtained the existence of harmonic maps and insight into the formation of bubbles.

Motivated by the supersymmetric nonlinear sigma model from quantum field theory, see \cite{jost2009geometry}, Dirac-harmonic maps from spin Riemann surfaces into Riemannian manifolds were introduced in \cite{chen2006dirac}. They are generalizations of the classical harmonic maps and harmonic spinors. From the variational point of view, they are critical points of a conformal invariant  action functional whose Euler-Lagrange equation is a coupled elliptic system consisting of a second order equation and a Dirac equation. Being generalizations of harmonic maps, it looks natural to extend the scheme of \eqref{2} to them. However, new difficulties arise. 

In fact, it turns out that the existence of  Dirac-harmonic maps from closed surfaces is a very difficult problem because the kernel of the Dirac operator is a linear space. Moreover, different from the Dirichlet problem, even if there is no bubble, the non-triviality of the limit is also an issue. Here, a solution is considered  trivial if the spinor part $\psi$ vanishes identically. So far, there are only a few results in this direction. See \cite{ammann2013dirac} and \cite{chen2015dirac} for uncoupled Dirac-harmonic maps (here uncoupled means that the map part is harmonic) based on index theory and the Riemann-Roch theorem, respectively. The problem has also been approached by linking theory, see \cite{jost2019alpha}. Recently, we are succeed in get an existence result by the heat flow method \cite{jost2019short}. Precisely, we use $\alpha$-Dirac-harmonic map flow starting from an initial map with nontrivial $\alpha$-genus to get nontrivial $\alpha$-Dirac-harmonic maps. Then a nontrivial Dirac-harmonic map comes from a sequence of nontrivial $\alpha$-Dirac-harmonic maps by blow-up analysis. This procedure can be viewed as an extension of Sacks-Uhlenbeck scheme to Dirac-harmonic maps. 

Given these existence results about (Dirac-)harmonic maps on closed surfaces, it is natural to consider the compactness. When the domain surfaces are fixed, the compactness problem is well-understood, see \cite{sacks1981existence} for harmonic maps and \cite{chen2005regularity}\cite{zhao2007energy} for Dirac-harmonic maps. When the domain surfaces vary, it is necessary to consider the degeneration of conformal structures on a Riemann surface. Topologically, the limit surface is obtained by collapsing finitely many simple closed geodesics. There are two types of collapsing curves. One is homotopically trivial, which corresponds to the bubbling near isolated singularities. The other one is homotopically nontrivial, which corresponds to the degeneration of complex structure. By the bubbling procedure in \cite{zhu2010harmonic}\cite{zhu2009dirac}, two generalized energy identities were proved. These identities tell us that energy may get lost from the formation of some necks that fail to converge to points.
Therefore, given a sequence of harmonic maps with bounded energy, we cannot assure the non-triviality of the limit harmonic maps from degenerating surfaces even if the energy uniformly stays away from zero. For the limit Dirac-harmonic map, the situation is worse. We cannot give a geometric condition to assure the non-triviality of the spinor because, so far, there is no  condition to get rid of nontrivial Dirac-harmonic spheres. 

In this paper, motivated by the existence of nontrivial $\alpha$-(Dirac-)harmonic maps from closed (spin) surfaces, we consider the (Dirac-)harmonic maps on degenerating surfaces. We first study the compactness of a sequence of $\alpha$-Dirac-harmonic maps from closed hyperbolic surfaces degenerating to a hyperbolic Riemann surface and get a nice energy identity. In this new energy identity, there is no energy loss on the degenerating cylinder, which turns out to be very important in proving the non-triviality of limit (Dirac-)harmonic maps.

\begin{thm}[Compactness and energy identity]
Let $(M_n,h_n,c_n,\mathfrak{S}_n)$ be a sequence of closed hyperbolic surfaces of genus $g>1$ degenerating to a hyperbolic Riemann surface $(M,h,c,\mathfrak{S})$ by collapsing finitely many pairwise disjoint simple closed geodesics $\{\gamma_n^j, j\in J\}$. Denote by $l_n^j$ the length of $\gamma_n^j$ and $\l_n:=\max\limits_{j\in J}\{l_n^j\}$. We choose a sequence of constants, $\{\alpha_n\in(1,2)\}$, such that 
\begin{equation}
\lim_{n\to\infty}\bigg(\frac{2l_n}{\sqrt{\sinh{\frac{l_n}{2}}}}\bigg)^{\alpha_n-1}=0.
\end{equation}
For each $n$, suppose that $(u_n,\psi_n)$ is an $\alpha_n$-Dirac-harmonic map from $(M_n,h_n,c_n,\mathfrak{S}_n)$ into a fixed compact manifold $N$ with nonpositive curvature and that it satisfies
\begin{equation}
E_{\alpha_n}(u_n)+E(\psi_n)\leq\Lambda,
\end{equation}
 for some positive constant $\Lambda$, where $E(\psi_n):=E(\psi_n,h_n,M_n)=\int_{M_n}|\psi|^4dvol_{h_n}$. Moreover, we assume that the first positive eigenvalue $ \lambda_1(h_n)$ of the usual Dirac operator $\slashed{\partial}_{h_n}$ stays away from zero, i.e.
 \begin{equation}\label{1st eigenvalue}
 \lambda_1(h_n)\geq c_0
 \end{equation}
 for some positive constant $c_0>0$. Then there exists a Dirac-harmonic map $(u,\psi):(M,h,c,\mathfrak{S})\to N$ such that, after a selection of a subsequence,
 \begin{equation}\label{loc convergence}
 (u_n,\psi_n)\to(u,\psi) \ \text{in} \ C^{\infty}_{loc}(M)\times C^{\infty}_{loc}(M)
 \end{equation}
 and 
 \begin{equation}
 \lim_{n\to\infty}E(u_n,h_n,M_n)=E(u,h,M),
 \end{equation}
  \begin{equation}
 \lim_{n\to\infty}E(\psi_n,h_n,M_n)=E(\psi,h,M).
 \end{equation}
 \end{thm}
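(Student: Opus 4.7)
The plan is to combine the thin--thick decomposition of degenerating hyperbolic surfaces with a sharpened energy estimate on the collars that exploits the precise choice of $\alpha_n$. Fix $\delta > 0$ below the Margulis constant; then each $M_n$ splits into a thick part $(M_n)_{\geq \delta}$, which converges smoothly on compact subsets to the thick part of $M$, together with a disjoint union of standard hyperbolic collars around the pinching geodesics $\gamma_n^j$, each conformally isometric to a long flat cylinder of modulus comparable to $1/l_n^j$.

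The first step is to obtain $C^\infty_{loc}$ convergence on $M$. For a compact $K \subset M$ and $n$ large, $K$ embeds isometrically into the thick part of $M_n$ with uniformly bounded geometry, so small-energy $\varepsilon$-regularity for $\alpha_n$-Dirac-harmonic maps (Sacks--Uhlenbeck type estimates for the map part together with a Dirac bootstrap for the spinor part, where the uniform lower bound $\lambda_1(h_n) \geq c_0$ keeps the Dirac operator uniformly invertible on its image) yields higher-order bounds away from at most finitely many concentration points on $K$. Nonpositive sectional curvature of $N$ forces any map-part bubble to be trivial (there is no nonconstant harmonic two-sphere into $N$), and the Weitzenb\"ock formula for spinors twisted by $u_n^{*}TN$ rules out spinor bubbles under the same hypothesis. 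Hence along a subsequence $(u_n,\psi_n) \to (u,\psi)$ smoothly on compact subsets of $M$, and passing to the limit in the $\alpha_n$-Euler--Lagrange equations as $\alpha_n \to 1$ shows that $(u,\psi)$ solves the Dirac-harmonic map system on $(M,h,c,\mathfrak{S})$.

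The technical heart and main obstacle is the no-neck estimate on each collar, which is what produces the two energy identities. Introduce coordinates $(t,\theta) \in [-T_n^j,T_n^j] \times S^1_{l_n^j}$ with $T_n^j$ essentially $\tfrac{1}{2}\log\tfrac{1}{\sinh(l_n^j/2)}$. The Bochner formula with nonpositive target curvature makes $|du_n|^2$ and $|\psi_n|^4$ subharmonic up to controllable terms; combining this with the $\alpha_n$-Pohozaev identity for the coupled system --- extending the arguments in \cite{chen2005regularity,zhao2007energy,zhu2009dirac} to the $\alpha$-setting --- yields a monotonicity for a weighted energy on concentric sub-annuli. The resulting collar estimate takes the schematic form
\begin{equation}
E(u_n;\mathrm{collar}_n^j) + E(\psi_n;\mathrm{collar}_n^j) \leq C\left(\frac{2 l_n^j}{\sqrt{\sinh(l_n^j/2)}}\right)^{\alpha_n - 1}(\Lambda + 1),
\end{equation}
whose right-hand side tends to zero precisely by the hypothesis on $\alpha_n$. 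Combined with smooth thick-part convergence (which gives energy convergence on every compact subset of $M$), this vanishing of the collar energy yields both energy identities.

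The delicate point, and the place where all assumptions are used simultaneously, is matching the analytic gain from $\alpha_n > 1$ with the precise hyperbolic geometry of the collar (whose half-width is $\arcsinh(1/\sinh(l_n^j/2))$) so that the resulting factor is exactly $\bigl(2l_n^j/\sqrt{\sinh(l_n^j/2)}\bigr)^{\alpha_n - 1}$. The eigenvalue assumption \eqref{1st eigenvalue} plays a supporting role by excluding almost-zero modes of the Dirac operator that could otherwise localize spinor energy on the degenerating collar and obstruct the no-neck conclusion on the spinor side.
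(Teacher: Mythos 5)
Your outline of the thick--thin decomposition, the pull-forward via $\tau_n$, the smooth convergence on the thick part, and the use of nonpositive curvature of $N$ to rule out bubbles is consistent with the paper. But the collar estimate --- which you rightly identify as the heart of the matter --- is handled very differently, and your proposed route has a genuine gap.

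The paper's argument on the degenerating collar is elementary and does not use Bochner, Pohozaev, or any monotonicity formula. It is a bare H\"older inequality exploiting the extra integrability built into the $\alpha_n$-energy bound: since $\int_{M_n}|du_n|^{2\alpha_n}\le E_{\alpha_n}(u_n)\le\Lambda$, one writes
\begin{equation}
\int_{P_n^{\delta_n}}|du_n|^2\ \le\ \Bigl(\int_{P_n^{\delta_n}}|du_n|^{2\alpha_n}\Bigr)^{1/\alpha_n}\,\bigl(\mathrm{Area}(P_n^{\delta_n})\bigr)^{1-1/\alpha_n}\ \le\ \Lambda^{1/\alpha_n}\,\bigl(\mathrm{Area}(P_n^{\delta_n})\bigr)^{(\alpha_n-1)/\alpha_n},
\end{equation}
and then computes in the collar coordinates that $\mathrm{Area}(P_n^{\delta_n})\le 2l_n/\sqrt{\sinh(l_n/2)}$. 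The hypothesis on $\alpha_n$ is exactly what makes this area power go to zero. The spinor estimate is the same H\"older step with $L^q$ in place of $L^{2\alpha_n}$, and this is precisely where the eigenvalue assumption $\lambda_1(h_n)\ge c_0$ enters: it yields, via Lemma \ref{spinor Lq norm}, a bound on $\|\psi_n\|_{L^q(M_n)}$ that is uniform as the domain degenerates, so that $\int_{P_n^{\delta_n}}|\psi_n|^4\le C\,\mathrm{Area}(P_n^{\delta_n})^{1-4/q}\to 0$.

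Your proposed derivation via Bochner subharmonicity, an $\alpha_n$-Pohozaev identity, and a weighted monotonicity is the toolkit of the earlier no-neck analyses (\cite{zhu2010harmonic,zhu2009dirac}), which produce only a \emph{generalized} energy identity in which energy can be lost on the neck --- exactly the phenomenon this theorem is designed to remove. Nothing in that toolkit naturally produces a factor of the form $\bigl(\mathrm{Area}\bigr)^{\alpha_n-1}$; your schematic collar estimate has the right shape because you reverse-engineered it from the stated hypothesis on $\alpha_n$, not because the cited tools deliver it. The missing idea is that for $\alpha_n>1$ one already controls $\|du_n\|_{L^{2\alpha_n}}$, so small area of the thin part forces small $L^2$-energy by H\"older alone, with no neck analysis needed. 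Replacing your third paragraph with that observation and the explicit area computation on $P_n^{\delta_n}$ closes the gap and also clarifies the actual role of the eigenvalue assumption (uniform $L^q$ control of the spinor, not $\varepsilon$-regularity).
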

 
In the special case of  harmonic maps, the result becomes 
 \begin{thm}
Let $(\Sigma_n,h_n,c_n)$ be a sequence of closed hyperbolic surfaces of genus $g>1$ degenerating to a hyperbolic Riemann surface $(\Sigma,h,c)$ by collapsing finitely many pairwise disjoint simple closed geodesics $\{\gamma_n^j, j\in J\}$. Denote by $l_n^j$ the length of $\gamma_n^j$ and $\l_n:=\max\limits_{j\in J}\{l_n^j\}$. We choose a sequence of constants, $\{\alpha_n\in(1,2)\}$, such that 
\begin{equation}
\lim_{n\to\infty}\bigg(\frac{2l_n}{\sqrt{\sinh{\frac{l_n}{2}}}}\bigg)^{\alpha_n-1}=0..
\end{equation}
For each $n$, suppose that $u_n$ is an $\alpha_n$-harmonic map from $(\Sigma_n,h_n,c_n)$ into a fixed manifold $N$ with nonpositive curvature which satisfies
\begin{equation}
E_{\alpha_n}(u_n)\leq\Lambda,
\end{equation}
 for some positive constant $\Lambda$. Then there exists a harmonic map $u:(\Sigma,h,c)\to N$ such that, after a selection of a subsequence,
 \begin{equation}
 u_n\to u\ \text{in} \ C^{\infty}_{loc}(\Sigma)
 \end{equation}
 and 
 \begin{equation}
 \lim_{n\to\infty}E(u_n,h_n,\Sigma_n)=E(u,h,\Sigma).
 \end{equation}
\end{thm}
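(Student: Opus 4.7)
My plan is to mimic the strategy of Theorem 1.1 with the spinor set to zero, so the Dirac equation becomes trivial and the eigenvalue hypothesis is never invoked. The argument is purely map-theoretic, and the nonpositive curvature of $N$ is what makes the quantitative estimates work.

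First I would apply the collar lemma to decompose each $\Sigma_n$ into its thick part, which converges smoothly on compact subsets of $\Sigma\setminus\{p_j\}$ to the thick part of the limit surface $\Sigma$, and a union of standard hyperbolic collars $C_n^j$ around the collapsing geodesics $\gamma_n^j$. On the thick part, the Euler--Lagrange equation for $E_{\alpha_n}$ together with the Bochner identity for a map into a nonpositively curved target yields a weak subharmonic inequality for $(1+|du_n|^2)^{\alpha_n}$, up to lower-order controlled terms. Combined with the integral bound $E_{\alpha_n}(u_n)\le\Lambda$, Moser iteration gives uniform $L^\infty$ control of $|du_n|$ over any fixed compact subset of $\Sigma\setminus\{p_j\}$, and standard elliptic bootstrapping for the $\alpha$-harmonic map equation then yields uniform $C^k$ bounds for every $k$. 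A diagonal subsequence converges in $C^\infty_{loc}(\Sigma\setminus\{p_j\})$ to a smooth harmonic map $u$.

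The heart of the argument is showing that no energy concentrates on the collars. I would parametrize each $C_n^j$ in Fermi coordinates $(t,\theta)$ with conformal factor $\rho_n(t)=l_n^j/(2\pi\cos(l_n^j t/(2\pi)))$; the expression $2l_n^j/\sqrt{\sinh(l_n^j/2)}$ comes out as the minimal injectivity-radius scale along $\gamma_n^j$ from the collar lemma. Applying a mean-value-type estimate for the subharmonic quantity $(1+|du_n|^2)^{\alpha_n}$ over geodesic balls of this scale, together with $E_{\alpha_n}(u_n)\le\Lambda$, I expect an $L^\infty$ estimate of the form
\[
\sup_{C_n^j}|du_n|\;\le\;C\,\Lambda^{1/2}\,\bigl(2l_n^j/\sqrt{\sinh(l_n^j/2)}\bigr)^{\alpha_n-1},
\]
which tends to zero by the rate hypothesis on $\alpha_n$. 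Since the hyperbolic area of each collar is uniformly bounded, this forces $\int_{C_n^j}|du_n|^2\,dvol_{h_n}\to 0$. The removable singularity theorem then extends $u$ smoothly across each puncture, and the global energy identity follows by splitting the total energy into thick-part and collar contributions and taking $n\to\infty$.

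The main technical obstacle will be the collar sup-estimate above: one must carefully couple the $\alpha_n$-weighted Bochner subharmonicity with the degenerating hyperbolic cylinder geometry to pull out precisely the factor $(2l_n/\sqrt{\sinh(l_n/2)})^{\alpha_n-1}$ that the hypothesis on $\alpha_n$ is tailored to kill. Once this is in hand, the remainder is a routine combination of thick--thin analysis, mean-value theory, and removable singularities.
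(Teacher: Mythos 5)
Your proposal diverges from the paper's proof at both of its key steps, and the step you identify yourself as the ``heart of the argument'' --- the collar sup-estimate --- does not hold as stated and is not the mechanism the paper uses.

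On the thin part, the paper does \emph{not} prove any pointwise decay of $|du_n|$ along the collar. It works with the $\delta_n$-thin subcollar $P_n^{\delta_n}$ (with $\delta_n\to 0$ chosen so that $\Sigma^{\delta_n}$ exhausts $\Sigma$), applies H\"older's inequality
\[
\int_{P_n^{\delta_n}}|du_n|^2\,dvol_{h_n}\;\le\;\Bigl(\int_{P_n^{\delta_n}}|du_n|^{2\alpha_n}\Bigr)^{1/\alpha_n}\bigl(\mathrm{Area}(P_n^{\delta_n})\bigr)^{1-1/\alpha_n}\;\le\;\Lambda\,\bigl(\mathrm{Area}(P_n^{\delta_n})\bigr)^{1-1/\alpha_n},
\]
and then computes $\mathrm{Area}(P_n^{\delta_n})\le 2l_n/\sqrt{\sinh(l_n/2)}$ explicitly from the collar metric. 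The hypothesis on $\alpha_n$ is precisely what makes the right-hand side vanish. Your sup-estimate $\sup_{C_n^j}|du_n|\lesssim(2l_n/\sqrt{\sinh(l_n/2)})^{\alpha_n-1}$ has the wrong flavor: a mean-value inequality over balls at the injectivity-radius scale $r$ naturally produces a factor $r^{-2}$ in front of the local energy, which blows up rather than decays deep in the collar, and no cancellation mechanism is offered. Moreover you would then need ``area of the collar bounded,'' but the relevant quantity in the argument is that the area of the degenerating subcollar goes to zero, not that the full collar's area (which tends to $4$) is bounded --- and the paper also has to account for the discrepancy region $\Omega_n^\delta$ between $M_n\setminus\tau_n^{-1}(M^\delta)$ and the genuine $\delta$-thin part $P_n^\delta$, which your scheme omits.

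On the thick part, the paper does not invoke a Bochner formula for $(1+|du_n|^2)^{\alpha_n}$. There is no clean subharmonicity statement for this quantity for $\alpha$-harmonic maps with varying $\alpha_n$, so Moser iteration is not available in the form you describe. Instead, the paper pushes forward by $\tau_n$, invokes its non-degenerating compactness theorem (Theorem~3.2, built on the Sacks--Uhlenbeck small-energy regularity lemma and the local bubbling lemma) on each fixed thick part $M^\delta$, and uses nonpositive curvature of $N$ only to rule out nontrivial bubbles (since there are no nonconstant harmonic spheres into a nonpositively curved target), thereby upgrading the generalized energy identity to genuine $C^\infty_{loc}$ convergence. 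Your overall decomposition into thick and thin parts is the right frame, but both of your quantitative inputs would need to be replaced by the H\"older/area estimate and the Sacks--Uhlenbeck bubbling analysis, respectively.
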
 
Actually, such compactness and energy identities are also  true for $\alpha(>1)$-(Dirac-)harmonic maps, see Theorems \ref{id dh alpha} and \ref{id h alpha}.

Moreover, combining these results with the existence of $\alpha$-(Dirac-)harmonic maps in \cite{sacks1981existence,ammann2013dirac}, we get an existence result about (Dirac-)harmonic maps from degenerating (spin) surfaces. Moreover, if the energy of the map parts also stays away from zero, both the limiting harmonic map and Dirac-harmonic map are nontrivial.

 \begin{thm}[Existence of Dirac-harmonic maps from degenerating surfaces]\label{existence dh degenerate}
Let $(M_n,h_n,c_n,\mathfrak{S}_n)$ be a sequence of closed hyperbolic surfaces of genus $g>1$ degenerating to a hyperbolic Riemann surface $(M,h,c,\mathfrak{S})$ by collapsing finitely many pairwise disjoint simple closed geodesics $\{\gamma_n^j, j\in J\}$. Denote by $l_n^j$ the length of $\gamma_n^j$ and $\l_n:=\max\limits_{j\in J}\{l_n^j\}$. For each $n$, suppose that $u_n$ is a map from $M_n$ into a fixed manifold $N$ with nonpositive curvature which satisfies
\begin{equation}\label{energy bound}
E_{\alpha_n}(u_n)\leq\Lambda,
\end{equation}
\begin{equation}\label{ind}
{\rm ind}_{u_n^*TN}(M_n)\neq0,
\end{equation}
where $\Lambda$ is a positive constant, and $\{\alpha_n\in(1,2)\}$ satisfies
\begin{equation}\label{alpha_n}
\lim_{n\to\infty}\bigg(\frac{2l_n}{\sqrt{\sinh{\frac{l_n}{2}}}}\bigg)^{\alpha_n-1}=0.
\end{equation}
Moreover, we assume that the first positive eigenvalue $ \lambda_1(h_n)$ of the usual Dirac operator $\slashed{\partial}_{h_n}$ stays away from zero, i.e.
 \begin{equation}\label{special degeneration}
 \lambda_1(h_n)\geq c_0
 \end{equation}
 for some positive constant $c_0>0$. Then there exists a Dirac-harmonic map $(u,\psi):(M,h,c,\mathfrak{S})\to N$ such that
  \begin{equation}\label{nonvanish}
\psi\neq0.
 \end{equation}
 In addition, let the minimizing harmonic map  $u_n^{\min}$  in $[u_n]$  satisfy
 \begin{equation}
 \lim_{n\to\infty}E(u_n^{\min})>0.
 \end{equation}
 Then $u$ is not a constant.
 \end{thm}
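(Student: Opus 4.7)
The plan is to combine the Ammann-Ginoux existence theorem \cite{ammann2013dirac} for $\alpha$-Dirac-harmonic maps on closed spin surfaces with the degenerating-surface compactness theorem already established above. For each $n$, the nonvanishing index hypothesis \eqref{ind} together with Ammann-Ginoux produces a nontrivial $\alpha_n$-Dirac-harmonic map $(\tilde u_n,\psi_n)$ on $(M_n,h_n,c_n,\mathfrak S_n)$ whose map part $\tilde u_n$ lies in the homotopy class of $u_n$. By $\alpha_n$-energy minimization within that homotopy class we may arrange $E_{\alpha_n}(\tilde u_n)\le E_{\alpha_n}(u_n)\le\Lambda$, and by the natural normalization built into Ammann-Ginoux's construction (exploiting linearity of the twisted Dirac equation in the spinor for fixed underlying map) we may impose $E(\psi_n)=\int_{M_n}|\psi_n|^4\,dvol_{h_n}=1$. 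The remaining hypotheses of the compactness theorem, namely the growth condition \eqref{alpha_n} on $\alpha_n$ and the eigenvalue condition \eqref{special degeneration}, are assumed directly, so the compactness theorem applies and produces, along a subsequence, a Dirac-harmonic map $(u,\psi):(M,h,c,\mathfrak S)\to N$ with $(\tilde u_n,\psi_n)\to(u,\psi)$ in $C^\infty_{loc}$ and with the energy identities
\[
\lim_{n\to\infty}E(\tilde u_n,h_n,M_n)=E(u,h,M),\qquad \lim_{n\to\infty}E(\psi_n,h_n,M_n)=E(\psi,h,M).
\]

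The step I expect to be the main obstacle is the nontriviality \eqref{nonvanish}. With the normalization $E(\psi_n)=1$ in place, the spinor energy identity forces $E(\psi,h,M)=1$ immediately, so $\psi\not\equiv 0$. All of the analytic subtlety is therefore absorbed into the ``no spinor energy loss on the collapsing collars'' feature of the compactness theorem, and this is precisely where \eqref{special degeneration} is consumed: without a uniform lower bound on $\lambda_1(h_n)$, the mass of the normalized spinors $\psi_n$ could escape entirely into the degenerating necks and leave $\psi\equiv 0$ in the limit. This also explains why the Ammann-Ginoux construction on each closed $M_n$ is insufficient by itself: it provides a nontrivial spinor on every $M_n$ but says nothing about persistence through the degeneration.

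For the additional claim that $u$ is nonconstant under the hypothesis $\lim_n E(u_n^{\min})>0$, we use that $\tilde u_n$ is homotopic to $u_n$, so $u_n^{\min}$ is the minimizing harmonic map in $[\tilde u_n]$ as well; in nonpositive curvature it exists and minimizes energy in its homotopy class by Eells-Sampson, so $E(u_n^{\min})\le E(\tilde u_n)\le E_{\alpha_n}(\tilde u_n)$. The map-part energy identity then yields $E(u,h,M)=\lim_n E(\tilde u_n,h_n,M_n)\ge\lim_n E(u_n^{\min})>0$, so $u$ cannot be constant.
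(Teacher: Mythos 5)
Your proof is correct and follows essentially the same route as the paper: produce nontrivial $\alpha_n$-Dirac-harmonic maps via the index hypothesis and the Ammann–Ginoux construction (Theorem \ref{uncoupled alpha DH map}), then feed them into the degenerating-surface compactness/energy-identity theorem (Theorem \ref{id dh}) and read off nontriviality from the energy identities. The only cosmetic difference is that you normalize $\|\psi_n\|_{L^4}^4=1$ rather than $\|\psi_n\|_{L^2}=1$ as in the paper; both work, since the hyperbolic genus-$g$ surfaces $M_n$ all have the same area, so the two normalizations are interchangeable up to a fixed constant via H\"older.
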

 
  \begin{thm}
Let $(\Sigma_n,h_n,c_n)$ be a sequence of closed hyperbolic surfaces of genus $g>1$ degenerating to a hyperbolic Riemann surfaces $(\Sigma,h,c)$ by collapsing finitely many pairwise disjoint simple closed geodesics $\{\gamma_n^j, j\in J\}$. Denote by $l_n^j$ the length of $\gamma_n^j$ and $\l_n:=\max\limits_{j\in J}\{l_n^j\}$. For each $n$, suppose that $u_n$ is a map from $\Sigma_n$ into a fixed manifold $N$ with nonpositive curvature and satisfies
\begin{equation}
E_{\alpha_n}(u_n)\leq\Lambda,
\end{equation}
where $\Lambda$ is a positive constant, and $\{\alpha_n\in(1,2)\}$ satisfies
\begin{equation}
\lim_{n\to\infty}\bigg(\frac{2l_n}{\sqrt{\sinh{\frac{l_n}{2}}}}\bigg)^{\alpha_n-1}=0.
\end{equation}
 Then there exists a harmonic map $u_0:(\Sigma,h,c)\to N$, which can be extended to a smooth harmonic map on $(\bar{\Sigma},\bar{c})$.
 
 In addition, let the minimizing harmonic map  $u_n^{\min}$  in $[u_n]$  satisfy
 \begin{equation}
 \lim_{n\to\infty}E(u_n^{\min})>0.
 \end{equation}
 Then $u_0$ is not a constant.
 \end{thm}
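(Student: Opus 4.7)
The plan is to reduce the problem to the harmonic-map compactness and energy identity theorem stated just above for degenerating hyperbolic surfaces, by first replacing each $u_n$ with an $\alpha_n$-harmonic representative in its homotopy class, and then to use the nonpositive curvature of $N$ to compare the limit energy with $\lim_n E(u_n^{\min})$.

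First, for each $n$, since $\alpha_n>1$ the Sacks-Uhlenbeck perturbation scheme applies on the closed surface $\Sigma_n$: $E_{\alpha_n}$ satisfies a Palais-Smale condition on $W^{1,2\alpha_n}(\Sigma_n,N)$, and its infimum in the homotopy class $[u_n]$ is attained by a smooth $\alpha_n$-harmonic map $v_n\in[u_n]$ with $E_{\alpha_n}(v_n)\le E_{\alpha_n}(u_n)\le\Lambda$. The sequence $\{v_n\}$ then satisfies exactly the hypotheses of the harmonic compactness theorem for $\alpha_n$-harmonic maps into nonpositively curved targets on degenerating hyperbolic surfaces, so after extracting a subsequence I obtain a harmonic map $u_0:(\Sigma,h,c)\to N$ with
\begin{equation*}
v_n\to u_0\ \text{in}\ C^\infty_{\mathrm{loc}}(\Sigma),\qquad \lim_{n\to\infty}E(v_n,h_n,\Sigma_n)=E(u_0,h,\Sigma)<\infty.
\end{equation*}

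Next I would extend $u_0$ across the punctures of $\Sigma$. Each puncture corresponds to a collapsing simple closed geodesic $\gamma_n^j$ and sits, in $\bar c$-conformal coordinates, at the center of a punctured disk. Because harmonic-map energy in two dimensions is conformally invariant, $u_0$ has finite energy on each such punctured conformal disk, so the Sacks-Uhlenbeck removable-singularity theorem produces a smooth harmonic extension $u_0:(\bar\Sigma,\bar c)\to N$.

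For the non-triviality statement, assume $\lim_n E(u_n^{\min})>0$. Since $v_n$ and $u_n^{\min}$ both lie in the common homotopy class $[u_n]$ and $u_n^{\min}$ minimizes the Dirichlet energy $E$ in that class, $E(v_n,h_n,\Sigma_n)\ge E(u_n^{\min})$ for every $n$; passing to the limit using the energy identity from step~2 gives
\begin{equation*}
E(u_0,h,\Sigma)=\lim_{n\to\infty}E(v_n,h_n,\Sigma_n)\ge \lim_{n\to\infty}E(u_n^{\min})>0,
\end{equation*}
so $u_0$ cannot be constant. The main technical burden is really packed into step~2, namely the no-neck-energy identity on the long hyperbolic collars under the carefully chosen rate condition on $\alpha_n$; once this is granted, the existence of $\alpha_n$-minimizers, the removable-singularity extension, and the energy comparison that rules out the constant limit are all routine.
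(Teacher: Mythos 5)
Your proof is correct and follows essentially the same route the paper takes: replace each $u_n$ by a Sacks--Uhlenbeck $\alpha_n$-harmonic minimizer $v_n\in[u_n]$ with $E_{\alpha_n}(v_n)\le\Lambda$, invoke the preceding compactness/energy-identity theorem for $\alpha_n$-harmonic maps on degenerating hyperbolic surfaces to produce the limit $u_0$ with $\lim_n E(v_n)=E(u_0)$, extend across the punctures via removable singularity, and use $E(v_n)\ge E(u_n^{\min})$ together with the no-loss energy identity to rule out a constant limit. The paper states this proof only implicitly (by analogy with the Dirac-harmonic case), and your write-up supplies exactly the details that analogy points to.
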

 
Here is a remark on Theorem \ref{existence dh degenerate} above. First, unlike \cite{zhu2009dirac}, we do not make an assumption on  the type of the punctures of $M$. Therefore, our result gives an existence result for  Dirac-harmonic map on those Riemann surfaces with Ramond type punctures. Even in the Neveu-Schwarz case, in which the Dirac-harmonic map $(u,\psi)$ can be extended to a smooth Dirac-harmonic map $(\bar{u},\bar{\psi})$ on $(\bar{M},\bar{c},\bar{\mathfrak{S}})$,  the existence of a Dirac-harmonic map on $(\bar{M},\bar{c},\bar{\mathfrak{S}})$ does not directly follow from  the result in \cite{ammann2013dirac} because we do not know whether ${\rm ind}_{\bar{u}^*TN}(\bar{M},\bar{\mathfrak{S}})\neq0$. Second, the uniformly bounded energy assumption \eqref{energy bound} is possible. One can take a simple example where $u_n$ is constant in a fixed neighborhood of the degenerating curves. Third, the index assumption \eqref{ind} is used to assure the non-triviality of the spinor, i.e. \eqref{nonvanish}. In \cite{ammann2013dirac}, the authors discussed some situations to realize the assumption \eqref{ind}. See for example Corollary 10.3 and Theorem 10.6 in \cite{ammann2013dirac}. Last, if the limit surface $M$ has a discrete Dirac spectrum, then Pf\"affle \cite{pfaffle2005eigenvalues} proved that the usual Dirac operators $\slashed{\partial}_{h_n}$ are $(\Upsilon,\epsilon)$-spectral close to $\slashed{\partial}_h$. In this case, if we also assume that the dimension of the kernel of $\slashed{\partial}_{h_n}$ converges to the that of $\slashed{\partial}_h$, then 
the assumption \eqref{special degeneration} follows. This leads to the question  when the limit surface $M$ has a discrete spectrum. An answer was provided by B\"ar in \cite{bar2000dirac}. 

\begin{thm}\cite{bar2000dirac}
Let $M$ be a hyperbolic manifold of finite volume equipped with a spin structure. If the spin structure is nontrivial along all cusps, i.e.,  all the punctures of $M$ are of Neveu-Schwarz type, then the spectrum is discrete. In particular, if $M$ is a $2$- or $3$-manifold and has only one cusp, then the spectrum is always discrete.

Here the cusps of $M$ are approximated by degenerating tubes around short closed geodesics in $M_n$ of length $l_n^j$, $j\in J$, where $|J|$ is the number of cusps.
\end{thm}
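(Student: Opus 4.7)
The plan is to prove discreteness of $\mathrm{spec}(\slashed{\partial}_h)$ by showing its essential spectrum is empty. By the decomposition principle for essential spectra, the essential spectrum depends only on the geometry at infinity, and a finite-volume hyperbolic manifold admits a thick-thin decomposition $M = M_0 \cup \bigcup_j C_j$ with $M_0$ compact and each cusp $C_j$ isometric to the warped product $([R,\infty) \times Y_j, \, dr^2 + e^{-2r} g_{Y_j})$, where $(Y_j, g_{Y_j})$ is a closed flat $(n-1)$-manifold carrying the spin structure induced from $\mathfrak{S}$. It therefore suffices to verify that $\slashed{\partial}_h$ has compact resolvent on each cusp end.

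On $C_j$, the standard warped-product identity for the Dirac operator yields, in an appropriate gauge,
\begin{equation*}
\slashed{\partial}_h \;=\; \gamma(\partial_r)\Bigl(\partial_r - \tfrac{n-1}{2} + e^{r}\,\slashed{\partial}_{Y_j}\Bigr),
\end{equation*}
with $\slashed{\partial}_{Y_j}$ acting fiberwise as the Dirac operator of $(Y_j, g_{Y_j})$. Expanding spinors on the cusp into eigenmodes of the elliptic operator $\slashed{\partial}_{Y_j}$ decouples the problem into a direct sum of $1$-dimensional first-order operators on $[R, \infty)$ of the form $\partial_r - \tfrac{n-1}{2} \pm e^{r}\mu$ as $\mu$ runs through $\mathrm{spec}(\slashed{\partial}_{Y_j})$. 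The Neveu-Schwarz hypothesis is used precisely here: for a closed flat manifold equipped with a \emph{nontrivial} spin structure, a lift to a flat torus cover followed by the standard Fourier computation shows $\ker \slashed{\partial}_{Y_j} = 0$ and in fact produces an explicit gap $|\mathrm{spec}(\slashed{\partial}_{Y_j})| \geq \mu_0 > 0$. This gap converts the warped-product factor $e^r$ into a genuine exponentially growing confining potential, and each $1$D eigenmode operator has compact resolvent by standard results for Schr\"odinger-type operators with exponentially trapping potentials. Summing over eigenmodes and cusps gives emptiness of the essential spectrum, hence discreteness.

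For the second claim, the cross-section of the single cusp of a $2$- or $3$-manifold is $S^1$ or $T^2$, which carries only finitely many spin structures. A parity argument using the fact that the total spin-bordism class of the cusp cross-sections must vanish (since the cross-sections bound the truncation of $M$ as a spin manifold) rules out the bounding (Ramond) structure when there is exactly one cusp in dimensions $2$ and $3$: a single Ramond $S^1$ represents the generator of $\Omega^{\mathrm{spin}}_1 \cong \mathbb{Z}/2$, and a single Ramond $T^2$ (with bounding spin structure) represents a nonzero class compatible with the obstruction. Hence the unique cusp must be Neveu-Schwarz, and the previous argument applies.

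The main obstacle is the spectral-gap lemma for $\slashed{\partial}_{Y_j}$: one must translate the geometric \emph{Neveu-Schwarz} condition at the cusp into the precise algebraic condition on the pair (holonomy representation, spin lift) that forbids a covariantly constant spinor on the flat cross-section, and then confirm that the gap $\mu_0$ persists after quotienting the flat-torus cover by the finite holonomy group of the Bieberbach manifold $Y_j$. Once this lemma is in place, the rest of the argument is a routine application of the decomposition principle and Persson-type confinement estimates.
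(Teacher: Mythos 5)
This theorem is quoted from B\"ar \cite{bar2000dirac}; the paper gives no proof of its own, only the citation. Your reconstruction follows B\"ar's actual strategy---decomposition principle to localize the essential spectrum to the cusp ends, warped-product reduction of the Dirac operator on each cusp, separation into eigenmodes of the cross-section operator $\slashed{\partial}_{Y_j}$, and the observation that the Neveu--Schwarz hypothesis forces a spectral gap $|\mathrm{spec}(\slashed{\partial}_{Y_j})| \geq \mu_0 > 0$, which turns the $e^r$ coefficient into a genuinely confining potential---so the plan is the right one and the warped-product sign conventions you wrote are consistent with the metric $dr^2 + e^{-2r}g_{Y_j}$.

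The one flaw is a terminological inversion in the single-cusp paragraph: ``Ramond $T^2$ (with bounding spin structure)'' is self-contradictory. The Ramond (fully periodic) spin structure on $T^2$ is the \emph{non-bounding} one; it generates $\Omega^{\mathrm{spin}}_2 \cong \mathbb{Z}/2$ precisely because it does not bound a compact spin $3$-manifold, and for the same reason it is the unique spin structure on $T^2$ admitting harmonic spinors. Likewise, Ramond on $S^1$ is the periodic, non-bounding structure generating $\Omega^{\mathrm{spin}}_1 \cong \mathbb{Z}/2$. The parity argument should therefore read: the truncated manifold $M_0$ is a compact spin manifold whose boundary is the disjoint union of cusp cross-sections, so the sum of their spin-bordism classes vanishes; with exactly one cusp that single class vanishes, so the cross-section is not the generator, hence carries no harmonic spinor, hence the cusp is Neveu--Schwarz. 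Also note that the final sentence of the theorem as quoted (``Here the cusps of $M$ are approximated\dots'') is the authors' contextual remark tying B\"ar's statement to their degeneration setup and is not part of what requires proof.
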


The finite volume assumption follows from the assumption \eqref{energy bound} (see the proof of Theorem \ref{id dh}). Combining all these facts, we have the following corollary.

\begin{cor}
Let $(M_n,h_n,c_n,\mathfrak{S}_n)$ be a sequence of closed hyperbolic surfaces of genus $g>1$ degenerating to a hyperbolic Riemann surface $(M,h,c,\mathfrak{S})$ with only Neveu-Schwarz type punctures by collapsing finitely many pairwise disjoint simple closed geodesics $\{\gamma_n^j, j\in J\}$. Denote by $l_n^j$ the length of $\gamma_n^j$ and $\l_n:=\max\limits_{j\in J}\{l_n^j\}$. We assume

(i) the dimension of the kernel  of $\slashed{\partial}_{h_n}$ converges to  that of $\slashed{\partial}_h$,

(ii) $\{u_n\}$ is a sequence of maps from $M_n$ into a closed even-dimensional orientable nonpositive curved manifold $N$ with nontrivial pull-back bundle $u_n^*TN\to M_n$ and uniformly bounded $\alpha_n$-energy for $\alpha_n$ satisfying \eqref{alpha_n}.

 Then there exists a smooth Dirac-harmonic map $(u,\psi):(M,h,c,\mathfrak{S})\to N$, which can be extended to a smooth Dirac-harmonic map $(\bar{u},\bar{\psi})$ on $(\bar{M},\bar{c},\bar{\mathfrak{S}})$, such that
  \begin{equation}
\bar\psi,\psi\neq0.
 \end{equation}
 
 In addition, let   the minimizing harmonic map $u_n^{\min}$ in $[u_n]$  satisfy
 \begin{equation}
 \lim_{n\to\infty}E(u_n^{\min})>0.
 \end{equation}
 Then both $\bar{u}$ and $u$ are nontrivial.
\end{cor}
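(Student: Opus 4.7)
The plan is to reduce the corollary to Theorem \ref{existence dh degenerate} by verifying its hypotheses and then to invoke the removable singularity theorem in the Neveu-Schwarz case to obtain the extension to $(\bar M,\bar c,\bar{\mathfrak{S}})$. So the first step is to check that assumption (ii) implies the index condition \eqref{ind}: since $N$ is closed, even-dimensional and orientable, nontriviality of $u_n^*TN\to M_n$ yields a nonzero twisted $\alpha$-genus $\mathrm{ind}_{u_n^*TN}(M_n)\neq 0$ via the characteristic-class computation used in Corollary 10.3 and Theorem 10.6 of \cite{ammann2013dirac}.

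The second step is to verify the spectral gap \eqref{special degeneration}. Because all cusps of $M$ are of Neveu-Schwarz type, B\"ar's theorem quoted above gives that $\slashed{\partial}_h$ has purely discrete spectrum; the finite-volume hypothesis required by B\"ar and by Pf\"affle is automatic from the uniformly bounded $\alpha_n$-energy (as in the proof of Theorem \ref{id dh}), so in particular the smallest positive eigenvalue $\lambda_1(h)$ is strictly positive. Pf\"affle's $(\Upsilon,\epsilon)$-spectral closeness, combined with hypothesis (i) that $\dim\ker\slashed{\partial}_{h_n}\to\dim\ker\slashed{\partial}_h$, ensures that no positive eigenvalues of $\slashed{\partial}_{h_n}$ can drop into an arbitrarily small neighborhood of $0$ past the kernel, so $\lambda_1(h_n)\geq c_0>0$ for some $c_0$ and all $n$ large. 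This is the main obstacle of the argument, since one must rule out eigenvalues accumulating at zero from above; the convergence of kernel dimensions in (i) is precisely what is needed to prevent such accumulation.

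Having verified both \eqref{ind} and \eqref{special degeneration}, I can then apply Theorem \ref{existence dh degenerate} to the $\alpha_n$-Dirac-harmonic maps $(u_n,\psi_n)$ supplied by the existence result from \cite{ammann2013dirac} (these exist because of the index condition and the uniform energy bound). This produces a Dirac-harmonic map $(u,\psi):(M,h,c,\mathfrak{S})\to N$ satisfying the local smooth convergence in \eqref{loc convergence} together with the energy identities, and in particular $\psi\not\equiv 0$.

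Finally, since the punctures are all of Neveu-Schwarz type, the removable singularity theorem for Dirac-harmonic maps (mentioned in the remark after Theorem \ref{existence dh degenerate}) applies on each cuspidal end and extends $(u,\psi)$ smoothly to a Dirac-harmonic map $(\bar u,\bar\psi)$ on $(\bar M,\bar c,\bar{\mathfrak S})$. Since extension is by continuity from $M\subset\bar M$, the inequality $\psi\neq 0$ on $M$ forces $\bar\psi\neq 0$. For the last assertion, the additional hypothesis $\lim_{n\to\infty}E(u_n^{\min})>0$ is exactly the extra hypothesis of the second half of Theorem \ref{existence dh degenerate}, which directly yields that $u$ is non-constant on $M$, and hence that $\bar u$ is also non-constant on $\bar M$.
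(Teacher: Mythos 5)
Your proposal is correct and follows essentially the same route as the paper's (implicit) argument: assumption (ii) gives the index condition via the characteristic-class results of Ammann--Ginoux, the Neveu-Schwarz hypothesis gives discrete spectrum on $M$ via B\"ar, Pf\"affle's spectral closeness together with (i) yields the uniform lower bound on $\lambda_1(h_n)$, and then Theorem \ref{existence dh degenerate} plus the removable singularity theorem finishes the proof. The only cosmetic remark is that your discussion of the spectral gap is more explicit than the paper's one-line claim that assumption \eqref{special degeneration} ``follows,'' which is welcome.
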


\begin{rmk}
When the target manifold $N$ is odd-dimensional, one can choose spin structures $\mathfrak{S}_n$ on $M_n$ and $\mathfrak{S}$ on $M$ such that the conclusion in the corollary above still holds.
\end{rmk}

The rest of paper is organized as follows: In Section 2, we collect some well-known facts about the hyperbolic Riemann surface theory and some lemmas about $\alpha$-Dirac-harmonic maps. In Section 3, we show the generalized energy identity for a sequence of $\alpha$-Dirac-harmonic maps from non-degenerating spin surfaces. In Section 4, we will prove all the compactness and existence results stated in this introduction.

\section{Preliminaries}    
In order to study the (Dirac-)harmonic maps from degenerating (spin) surfaces, we recall some basic knowledge about degenerating surfaces and refer to \cite{zhu2010harmonic,zhu2009dirac} for more details.
\subsection{Degenerating (spin) surfaces}
A Riemann surface is an orientable surface with a complex structure. A hyperbolic surface is an orientable surface with a complete Riemannian metric of constant curvature $-1$ having finite area. The topological type of a surface is determined by its signature $(g, k)$, where $k$ is the number of punctures and $g$ is the genus of the surface obtained by adding a point at each puncture. The type $(g, k)$ is called general if 
$2g+k>2$. Two surfaces of type $(g, k)$ are called equivalent if there exists a conformal diffeomorphism between them preserving the punctures (if there are any). The space of equivalence classes is called the moduli space $\mathcal{M}_{g,k}$ of Riemann surfaces of type $(g, k)$. 

Now we consider closed Riemann surfaces of genus $g>1$. Any such surface is of general type and it acquires a complete hyperbolic metric. Therefore, they all stay in $\mathcal{M}_g$. This moduli space is non-compact because the conformal structure can degenerate. The only process by which the conformal structure can degenerate is the shrinking of simple closed geodesics. Hence, when we vary these in $\mathcal{M}_g$, they can degenerate into a surface with punctures. Generally, a sequence of surfaces of the same type can degenerate into another surface of different type. This inspires us to study the existence of (Dirac-)harmonic maps from degenerating surfaces based on the known existence results on closed surfaces.

The natural way to compactify $\mathcal{M}_g$ is to allow the lengths of the geodesics to become zero and thus admit surfaces with nodes as singularities. Topologically, one cuts a given  surface along a collection of finitely many homotopically independent pairwise disjoint simple closed curves and pinches the cut curves to points. More precisely, let $\Sigma_0$ be a topological model surface and $\Gamma^J=\{\gamma^j,j\in J\}$ a possibly empty collection of finitely many pairwise disjoint, homotopically nontrivial, simple closed curves on $\Sigma_0$. Let $\tilde\Sigma$ be the surface obtained from $\Sigma_0$ by pinching all curves $\gamma^j$ to points $\mathcal{P}^j$. Next, we remove all $\mathcal{P}^j$ from $\tilde\Sigma$ and place a complete hyperbolic metric $h$ on the resulting surface $\Sigma=\tilde\Sigma\setminus\cup_{j\in J}\mathcal{P}^j$. For $j\in J$, we denote by $(\mathcal{P}^{j,1},\mathcal{P}^{j,2})$ a pair of punctures on $(\Sigma,h)$ corresponding to $\mathcal{P}^{j}$. Denote by $\bar\Sigma$ the surface obtained by adding a point at each puncture of $\Sigma$. Then the complex structure $c$ on $\Sigma$ that is compatible with the hyperbolic metric $h$ extends to a complex structure $\bar{c}$ on $\bar\Sigma$. We call $(\tilde\Sigma,h,\bar{c})$ a nodal surface. $(\bar\Sigma,\bar{c})$ is called the normalization of $(\tilde\Sigma,h,\bar{c})$ or $(\Sigma,h,c)$. $\bar\Sigma$ is of lower topological type.

Let $(\Sigma_n,h_n,c_n)$ be a sequence of closed hyperbolic Riemann surfaces of genus $g>1$. We say that $(\Sigma_n,h_n,c_n)$ converges to a nodal surface $(\tilde\Sigma,h,\bar{c})$ or $(\Sigma,h,c)$ if there exist possibly empty collections $\Gamma^J_n=\{\gamma^j_n,j\in J\}$ of finitely many pairwise disjoint simple closed geodesics on each $(\Sigma_n,h_n,c_n)$ and continuous maps $\tau_n:\Sigma_n\to\tilde\Sigma$ with $\tau_n(\gamma_n^j)=\mathcal{P}^j$ for $j\in J$ and each $n$, such that\\
(1) For any $j\in J$, the length $l(\gamma_n^j)=l_n^j\to0$ as $n\to\infty$,\\
(2) $\tau_n:\Sigma_n\setminus\cup_{j\in J}\gamma_n^j\to\Sigma$ is a diffeomorphism for each $n$,\\
(3) $\bar{h}_n:=(\tau_n)_*h_n\to h$ in $C^\infty_{loc}$ on $\Sigma$,\\
(4) $\bar{c}_n:=(\tau_n)_*c_n\to c$ in $C^\infty_{loc}$ on $\Sigma$.

 If $|J|>0$, we say that $(\Sigma_n,h_n,c_n)$ \textit{degenerates to a nodal surface} $(\tilde\Sigma,h,\bar{c})$ or $(\Sigma,h,c)$. Thus, the analysis of the degeneration of hyperbolic surfaces is reduced to the local behavior of pinched geodesics. The following collar lemma is a fundamental tool to analyze this localization.
 \begin{lem}\cite{zhu2010harmonic}
 Let $\gamma$ be a simple closed geodesic of length $l(\gamma)=l$ in a closed Riemann surface $\Sigma$ of genus $g>1$. Then there is a collar of area $\frac{2l}{\sinh(l/2)}$ around $\gamma$ which is isometric to 
 \begin{equation}
 Z=\bigg\{re^{iw}\in\mathbb{H}: 1\leq r\leq e^l, \ \arctan\bigg(\sinh\bigg(\frac{l}{2}\bigg)\bigg)<w<\pi-\arctan\bigg(\sinh\bigg(\frac{l}{2}\bigg)\bigg)\bigg\},
 \end{equation}
 where $\gamma$ corresponds to $\{re^{i\pi/2}\in\mathbb{H}: 1\leq r\leq e^l\}$, and the lines $\{r=1\}$, $\{r=e^l\}$ are identified via $z\to e^lz$. 
 \end{lem}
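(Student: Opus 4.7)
The plan is to work in the universal cover $\mathbb{H}$ and use the fact that every simple closed geodesic on $\Sigma=\mathbb{H}/\Gamma$ (with $\Gamma$ a Fuchsian group acting properly discontinuously) lifts to the axis of a hyperbolic element of $\Gamma$. After conjugating in $PSL(2,\mathbb{R})$, I may assume that $\gamma$ lifts to the positive imaginary axis and that the generating hyperbolic isometry $T$ is $z\mapsto e^l z$, since the hyperbolic distance from $i$ to $e^l i$ along the imaginary axis equals $l$. A fundamental domain for $\langle T\rangle$ is then the annular sector $\{1\le r\le e^l\}$ in $\mathbb{H}$, with the two circular arcs $r=1$ and $r=e^l$ identified.

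Next I would construct the collar. In Fermi coordinates $(\rho,\theta)$ based at $\gamma$, the hyperbolic metric of $\mathbb{H}/\langle T\rangle$ becomes $d\rho^2+\cosh^2(\rho)\,d\theta^2$ with $\theta\in\mathbb{R}/l\mathbb{Z}$. The question is for which maximal width $w$ the natural map from the tubular neighborhood $\{|\rho|<w\}$ in $\mathbb{H}/\langle T\rangle$ into $\Sigma$ is injective. This requires that no other element of $\Gamma$ maps a point in the lifted tube to another point in the tube. The key analytic step here is the classical two-points lemma for Fuchsian groups (equivalently, a direct application of the Margulis lemma in dimension two, or Keen's inequality): using that $\gamma$ is simple, any nontrivial element $g\in\Gamma\setminus\langle T\rangle$ whose axis comes close to the imaginary axis would force two axes of simple closed geodesics to cross, a contradiction. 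A direct hyperbolic-trigonometric computation then shows that the largest width for which this obstruction cannot occur is $w=\arcsinh(1/\sinh(l/2))$; this is the main obstacle, and everything else is bookkeeping.

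Finally, I would compute the area and identify the collar region inside $\mathbb{H}$. Using Fermi coordinates,
\begin{equation}
\mathrm{Area}=\int_0^l\!\!\int_{-w}^{w}\cosh(\rho)\,d\rho\,d\theta=2l\sinh(w)=\frac{2l}{\sinh(l/2)},
\end{equation}
since $\sinh(w)=1/\sinh(l/2)$. To identify the collar with the stated subset of $\mathbb{H}$, I would use the isometric parametrization $z=re^{i\varphi}$ with $r=e^{\theta}$ (up to rescaling so that $\theta\in[0,l]$) and $\rho=\log\tan(\varphi/2)$-type relation between $\rho$ and the angular coordinate $\varphi$. The condition $|\rho|<w=\arcsinh(1/\sinh(l/2))$ translates, via $\sinh(\rho)=\cot(\varphi)$ on the geodesics orthogonal to the imaginary axis, into $\cot(\varphi)\in(-1/\sinh(l/2),1/\sinh(l/2))$, which is exactly
\begin{equation}
\arctan\!\left(\sinh(l/2)\right)<\varphi<\pi-\arctan\!\left(\sinh(l/2)\right).
\end{equation}
Combined with the identification $z\sim e^l z$ arising from $T$, this gives the stated description of $Z$, completing the identification of the collar.
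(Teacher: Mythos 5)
The paper presents this lemma with a citation to \cite{zhu2010harmonic} and gives no proof of its own, so there is no internal argument to compare against; your reconstruction is the standard proof underlying that citation (the Keen--Matelski--Randol collar lemma), and it is essentially correct. The Fermi-coordinate area computation, the relation $\sinh\rho=\pm\cot\varphi$ between the signed perpendicular distance $\rho$ and the argument $\varphi$ in the half-plane model, and the translation of the half-width bound $|\rho|<\arcsinh(1/\sinh(l/2))$ into the angular strip $\arctan(\sinh(l/2))<\varphi<\pi-\arctan(\sinh(l/2))$ all check out, as does $\mathrm{Area}=2l\sinh w=2l/\sinh(l/2)$.

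One small imprecision in the embedding step: the obstruction you need to rule out is that some $g\in\Gamma\setminus\langle T\rangle$ carries a point of the lifted tube to another point of the tube, which (for a closed surface, where all nontrivial elements are hyperbolic) forces the axis of $gTg^{-1}$ --- another lift of the \emph{same} simple geodesic $\gamma$ --- to pass close to the imaginary axis. Simplicity of $\gamma$ gives disjointness of these two lifts, and the sharp half-width then follows from the hyperbolic-trigonometric inequality for two disjoint geodesics each carrying a translation of length $l$. Your sketch speaks of ``two axes of simple closed geodesics'' crossing, which reads as though two distinct geodesics on $\Sigma$ were involved; since you explicitly invoke Keen's inequality rather than re-deriving it, this is a slip in the paraphrase rather than a gap, but worth stating precisely if the proof were to be written out in full.
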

 Topologically, this collar neighborhood is a cylinder. Under the conformal transformation
 \begin{equation}
 re^{iw}\to(t,\theta)=\bigg(\frac{2\pi}{l}w,\frac{2\pi}{l}\log r\bigg),
 \end{equation}
 the collar $Z$ is isometric to  following cylinder
 \begin{equation}
 \begin{split}
 P=\bigg\{(t,\theta): &\frac{2\pi}{l}\arctan\bigg(\sinh\bigg(\frac{l}{2}\bigg)\bigg)<t<\frac{2\pi}{l}\bigg(\pi-\arctan\bigg(\sinh\bigg(\frac{l}{2}\bigg)\bigg)\bigg),\\
 &0\leq\theta\leq2\pi\bigg\}
 \end{split}
 \end{equation}
 with metric
 \begin{equation}
  ds^2=\bigg(\frac{l}{2\pi\sin{\frac{lt}{2\pi}}}\bigg)^2(dt^2+d\theta^2),
  \end{equation}
  where $\gamma\subset Z$ corresponds to $\{t=\pi^2/l\}\subset P$, and the lines $\{\theta=0\}$, $\{\theta=2\pi\}$ are identified. In these coordinates, we have
  \begin{equation}\label{inj}
  \sinh({\rm inj}(t,\theta))\sin\bigg(\frac{lt}{2\pi}\bigg)=\sinh\bigg(\frac{l}{2}\bigg),
  \end{equation}
  where ${\rm inj}(t,\theta)$ is the injectivity radius at the point $(t,\theta)\in P$. Sometimes, we also denote by ${\rm inj}(x;h)$ the injectivity radius at the point at $x$ with respect to the metric $h$.
  
To  better understand the degeneration near the punctures, we recall the thick-thin decomposition (see \cite{hummel1997gromov}). Let $(\Sigma,h)$ be a hyperbolic surface of type $(g,k)$. For $0<\delta<\arcsinh 1$, define the $\delta$-thin part of $(\Sigma,h)$ as the set of points at which the injectivity radius is less than $\delta$, and the $\delta$-thick part as its complement. The following results show what the components of $\delta$-thin part of $(\Sigma,h)$ look like.
\begin{lem}
Let $\Sigma$ be a hyperbolic surface of type $(g,k)$. Then the simple closed geodesics in $\Sigma$ of length smaller then $2\arcsinh 1$ are pairwise disjoint and there are at most $3g-3+k$ of them.
\end{lem}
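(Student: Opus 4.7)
The argument naturally splits into two independent parts: pairwise disjointness of such short geodesics, and the cardinality bound $3g-3+k$.

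For disjointness, the plan is to apply the collar lemma (Lemma 2.1). From the explicit collar around a simple closed geodesic $\gamma$ of length $\ell$, one reads off (either by integrating the metric $ds^2$ along a radial line from the core to the boundary, or directly from \eqref{inj}) that its half-width equals $w(\ell)=\arcsinh(1/\sinh(\ell/2))$. When $\ell<2\arcsinh 1$ one has $\sinh(\ell/2)<1$, hence $w(\ell)>\arcsinh 1$ and the full width of the collar exceeds $2\arcsinh 1$. Now suppose two distinct short simple closed geodesics $\gamma_1,\gamma_2$ of length $<2\arcsinh 1$ met at a point $p$. Then $\gamma_2$ would enter the collar $C_1$ around $\gamma_1$. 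Lifting $C_1$ to its universal cover (a geodesically convex strip in $\mathbb{H}$), one checks that the only closed geodesic contained in the cylinder $C_1$ is its core $\gamma_1$; since $\gamma_2$ is simple and distinct from $\gamma_1$, it cannot lie inside $C_1$. Therefore the connected component of $\gamma_2\cap C_1$ through $p$ is a geodesic arc joining the two boundary circles of $C_1$, hence of length at least $2w(\ell_1)>2\arcsinh 1>\ell(\gamma_2)$, contradicting $\ell(\gamma_2)<2\arcsinh 1$.

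For the cardinality bound, I would cut $\Sigma$ along all $N$ such geodesics to obtain a (possibly disconnected) finite-area hyperbolic surface with geodesic boundary $\Sigma'=\bigsqcup_{i=1}^{m}\Sigma'_i$. Letting $g_i,b_i,p_i$ denote respectively the genus, the number of boundary circles produced by the cuts, and the number of inherited punctures of $\Sigma'_i$, the hyperbolicity of each $\Sigma'_i$ forces $\chi(\Sigma'_i)=2-2g_i-b_i-p_i\leq -1$. From $\sum_i b_i=2N$, $\sum_i p_i=k$ and $\sum_i\chi(\Sigma'_i)=\chi(\Sigma)=2-2g-k$, summing the inequality $\chi(\Sigma'_i)\leq -1$ over $i$ gives $m\leq 2g-2+k$, while the Euler-characteristic identity rearranges to $\sum_i g_i=m-1+g-N$, which combined with $g_i\geq 0$ yields $N\leq m-1+g$. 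Adding these two bounds gives $N\leq (2g-2+k)+g-1=3g-3+k$.

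The principal subtlety I expect lies in the geometric input of the disjointness step: justifying that the unique closed geodesic in the hyperbolic cylinder $C_1$ is its core, and that any geodesic arc through an interior point of $C_1$ which is not a wrapping of the core must exit through both boundary components with length at least $2w(\ell_1)$. Both facts become transparent after lifting to $\mathbb{H}$, but they are the only nontrivial geometric ingredients; the subsequent Euler-characteristic count is purely combinatorial.
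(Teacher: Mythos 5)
The paper does not prove this lemma: it is stated as a standard fact about the thick-thin decomposition of hyperbolic surfaces, with \cite{hummel1997gromov} cited in the surrounding text (the result also appears in Buser's book on compact Riemann surfaces). Your self-contained proof is correct and uses exactly the right ingredients. For the disjointness step, your reading of the Keen collar half-width $w(\ell)=\arcsinh\bigl(1/\sinh(\ell/2)\bigr)$ from the paper's Lemma 2.1 is right, and the observation that $w(\ell)>\arcsinh 1$ precisely when $\ell<2\arcsinh 1$ is what the numerical threshold in the statement is designed to exploit. One mild streamlining worth noting: rather than establishing that the component of $\gamma_2\cap C_1$ through $p$ joins the two distinct boundary circles (which requires a short lift-to-$\mathbb{H}$ argument), it suffices to observe that each of the two sub-arcs of $\gamma_2$ emanating from $p\in\gamma_1$ has length at least $w(\ell_1)$ before exiting $C_1$, by the reverse triangle inequality $d(p,q)\geq d(q,\gamma_1)-d(p,\gamma_1)=w(\ell_1)$ at the exit point $q$; since $\gamma_2$ is simple, the two sub-arcs are disjoint except at $p$, giving $\ell(\gamma_2)\geq 2w(\ell_1)>2\arcsinh 1$, the same contradiction. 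Your Euler-characteristic count for $N\leq 3g-3+k$ is the standard pants-decomposition bound; the arithmetic checks out, and the one implicit input is that every piece of the cut surface, carrying a complete hyperbolic metric with geodesic boundary, has $\chi\leq -1$ by Gauss--Bonnet, which rules out disk, annulus, and once-punctured-disk pieces exactly as your inequality $\chi(\Sigma_i')\leq -1$ uses.
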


\begin{prop}
Let $(\Sigma,h)$ be a hyperbolic surface of type $(g,k)$ and $U\subset\Sigma$ a component of $\{z\in\Sigma|{\rm inj}(z;h)<\arcsinh 1\}$. Then either

(1) $U$ contains a simple closed geodesic $\gamma$ of length $l=l(\gamma)<2\arcsinh 1$ and is isometric to 
\begin{equation*}
\bigg\{re^{iw}\in\mathbb{H}: 1\leq r\leq e^l, \ \arcsin\bigg(\sinh\bigg(\frac{l}{2}\bigg)\bigg)<w<\pi-\arcsin\bigg(\sinh\bigg(\frac{l}{2}\bigg)\bigg)\bigg\},
\end{equation*}
 where $\gamma$ corresponds to $\{re^{i\pi/2}\in\mathbb{H}: 1\leq r\leq e^l\}$, and the lines $\{r=1\}$, $\{r=e^l\}$ are identified via $z\to e^lz$.
 
 or
 
 (2) the closure of $U$ in $\Sigma$ is a standard puncture and hence isometric to 
 \begin{equation*}
\bigg\{z=x+iy\in\mathbb{H}: 0\leq x\leq1, \ y\geq\frac12\bigg\},
\end{equation*}
where the lines $\{x=0\}$, $\{x=1\}$ are identified via $z\to z+1$.
\end{prop}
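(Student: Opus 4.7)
The plan is to work in the universal cover $\mathbb{H}$ and apply the two-dimensional Margulis lemma to classify components of the thin part. First, I would pick a basepoint $z_0\in U$ and a lift $\tilde z_0\in\mathbb{H}$. Since $\mathrm{inj}(z_0;h)<\arcsinh 1$, there is a nontrivial deck transformation $g\in\Gamma:=\pi_1(\Sigma)$ with $d(\tilde z_0,g\tilde z_0)<2\arcsinh 1$. The key tool is the Margulis lemma for $\mathbb{H}^2$ (equivalent to the Shimizu--Jorgensen inequality for $\mathrm{PSL}(2,\mathbb{R})$): the Margulis constant is exactly $\arcsinh 1$, so the subgroup $\Gamma_{z_0}\subset\Gamma$ generated by all elements of translation distance less than $2\arcsinh 1$ at $\tilde z_0$ is elementary, hence cyclic since $\Gamma$ is torsion-free.

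Next I would split into cases by the type of a generator $g_0$ of $\Gamma_{z_0}$; by freeness and discreteness of the action, $g_0$ is either hyperbolic or parabolic. In the hyperbolic case, after conjugating I may assume $g_0:z\mapsto e^l z$ with $l<2\arcsinh 1$; the positive imaginary axis projects to a simple closed geodesic $\gamma$ of length $l$ in $\Sigma$, and the standard distance formula in $\mathbb{H}$ yields
\begin{equation*}
\sinh\bigl(d(z,g_0 z)/2\bigr)\sin(\arg z)=\sinh(l/2).
\end{equation*}
Hence the locus where the $g_0$-translation is less than $2\arcsinh 1$ is precisely the strip $\arcsin(\sinh(l/2))<\arg z<\pi-\arcsin(\sinh(l/2))$, and passing to the quotient by $g_0$ (using the fundamental strip $1\leq |z|\leq e^l$) recovers the model in (1). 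In the parabolic case, normalize $g_0$ to $z\mapsto z+1$; then a direct computation gives $\sinh\bigl(d(z,g_0 z)/2\bigr)=1/(2\,\mathrm{Im}\,z)$, so the level set becomes $\{\mathrm{Im}\,z\geq 1/2\}$, whose quotient by $g_0$ is exactly the standard cusp in (2).

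The main obstacle is verifying that the full quotient map $\mathbb{H}\to\Sigma$ is injective on the model region, not merely the intermediate quotient by $\Gamma_{z_0}$, and that $U$ coincides precisely with the image of this region. The first point follows from the Margulis lemma itself: if some $h\in\Gamma\setminus\Gamma_{z_0}$ identified two points of the model, then $h$ together with $g_0$ would generate a non-elementary subgroup containing short translations at some point in the region, contradicting the elementary conclusion of the Margulis step. For the second point, the explicit formulas above show that every interior point of the model satisfies $\mathrm{inj}<\arcsinh 1$, so the model lies in the thin part, and connectedness together with the first point forces equality with $U$. A minor technicality is to track the open versus closed boundary conventions and to match the fundamental strip $1\leq r\leq e^l$ with the action of $g_0$; these are bookkeeping issues that do not affect the structure of the argument.
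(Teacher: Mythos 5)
The paper does not supply its own proof of this proposition; it is recalled from the literature (the paper cites \cite{hummel1997gromov} for the thick--thin decomposition, and the adjacent collar Lemma from \cite{zhu2010harmonic}), so there is no paper proof to compare against. Your proposal is the standard Margulis-lemma argument, and it is essentially correct: identify the short subgroup $\Gamma_{z_0}$, use the sharp two-dimensional Margulis constant $\arcsinh 1$ together with torsion-freeness to conclude it is infinite cyclic, split into the hyperbolic and parabolic cases, and read off the model regions from the translation-distance formulas $\sinh\bigl(d(z,g_0z)/2\bigr)=\sinh(l/2)/\sin(\arg z)$ and $\sinh\bigl(d(z,g_0z)/2\bigr)=1/(2\,\mathrm{Im}\,z)$.

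Two places deserve a bit more care than your sketch gives them, though neither is a gap in the idea. First, when you pass from ``$\Gamma_{z_0}$ is elementary'' to ``cyclic,'' you should record that a torsion-free elementary Fuchsian group is infinite cyclic, and take the primitive generator $g_0$; that $g_0$ itself (not merely some power) has short translation at $\tilde z_0$ follows from the monotonicity in $n$ of $d(z,g_0^n z)$ visible in both formulas. Second, the embedding step needs to rule out an element $h$ with $h\tilde U\cap\tilde U\neq\emptyset$: the Margulis lemma applied at a common point gives that $g_0$ and $h^{-1}g_0 h$ generate a cyclic group, hence $h^{-1}g_0h=g_0^{\pm1}$; the case $g_0^{-1}$ would force $h$ to have a fixed point (contradicting freeness), and the case $g_0$ puts $h$ in the centralizer of $g_0$, which is $\langle g_0\rangle$ since $g_0$ is primitive. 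With those details filled in, the argument is complete, and the identification of the quotient region with $U$ (via connectedness and the explicit level sets) is as you describe.
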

  
  For the degeneration of spin surfaces, we also need to consider the spin structure. Let $(M_n,h_n,c_n,\mathfrak{S}_n)$ be a sequence of closed hyperbolic Riemann surfaces of genus $g>1$ with spin structures $\mathfrak{S}_n$. We assume that $(M_n,h_n,c_n)$ degenerates to a hyperbolic Riemann surface $(M,h,c)$ by collapsing $|J|$ ($1\leq|J|\leq3g-3$) pairwise disjoint simple closed geodesics on $M_n$. Let $(\bar{M},\bar{c})$ be the normalization of $(M,h,c)$. For each $n$, the diffeomorphism $\tau_n$ and the spin structure $\mathfrak{S}_n$ together determine a pull-forward spin structure on $M$. Since there are finitely many spin structures on a surface with punctures (c.f.\cite{zhu2009dirac}), by taking a subsequence, we can assume that the pull-forward of $\mathfrak{S}_n$
 is a fixed spin structure on $M$. Let us denote it by $\mathfrak{S}$. For each $j\in J$, the unit tangent vector field of $\gamma_n^j$ together with the corresponding unit normal vector field forms a section of the oriented orthonormal frame bundle $P_{SO(2)}$ of $M_n$. The spin structure $\mathfrak{S}_n$ is called trivial along $\gamma_n^j$ if this section lifts to a closed curve in $P_{{\rm spin}(2)}$; otherwise, it is nontrivial along $\gamma_n^j$. Therefore, $\mathfrak{S}$ is nontrivial or trivial along the pair of punctures $(\mathcal{P}^{j,1},\mathcal{P}^{j,2})$ if and only if $\mathfrak{S}_n$ is nontrivial or trivial along the geodesics $\gamma_n^j$ for all $n$. If the spin structure $\mathfrak{S}$ is nontrivial along all punctures on $M$, we say that all the punctures on $(M,\mathfrak{S})$ are of \textit{Neveu-Schwarz} type. In this case, the spin structure $\mathfrak{S}$ on $M$ extends to some spin structure $\bar{\mathfrak{S}}$ on $\bar{M}$. Furthermore, by the removable singularity theorem (c.f.\cite{zhu2009dirac}), any smooth Dirac-harmonic map from $(M,\mathfrak{S})$ to $N$ with finite energy extends to a smooth Dirac-harmonic map from $(\bar{M},\bar{\mathfrak{S}})$ to $N$.

In the rest of this section, we recall the definition of  Dirac-harmonic maps and collect some lemmas about $\alpha$-Dirac-harmonic maps which will be used later.
\subsection{Dirac-harmonic maps}
Let $(M, g)$ be a compact surface with a fixed spin structure. On the spinor bundle $\Sigma M$, we denote the Hermitian inner product by $\langle\cdot, \cdot\rangle_{\Sigma M}$. For any $X\in\Gamma(TM)$ and $\xi\in\Gamma(\Sigma M)$, the Clifford multiplication is skew-adjoint in the sense that 
\begin{equation}
\langle X\cdot\xi, \eta\rangle_{\Sigma M}=-\langle\xi, X\cdot\eta\rangle_{\Sigma M}.
\end{equation}
Let $\nabla$ be the Levi-Civita connection on $(M,g)$. There is a connection (also denoted by $\nabla$) on $\Sigma M$ compatible with $\langle\cdot, \cdot\rangle_{\Sigma M}$.  Choosing a local orthonormal basis $\{e_{\beta}\}_{\beta=1,2}$ on $M$, the usual Dirac operator is defined as $\slashed\partial:=e_\beta\cdot\nabla_\beta$, where $\beta=1,2$. Here and in the sequel, we use the Einstein summation convention. One can find more about spin geometry in \cite{lawson1989spin}.

Let $u$ be a smooth map from $M$ to another compact Riemannian manifold $(N, h)$ of dimension $n\geq2$. Let $u^*TN$ be the pull-back bundle of $TN$ by $u$ and consider the twisted bundle $\Sigma M\otimes u^*TN$. On this bundle there is a metric $\langle\cdot,\cdot\rangle_{\Sigma M\otimes u^*TN}$ induced from the metric on $\Sigma M$ and $u^*TN$. Also, we have a connection $\tilde\nabla$ on this twisted bundle naturally induced from those on $\Sigma M$ and $u^*TN$. In local coordinates $\{y^i\}_{i=1,\dots,n}$, the section $\psi$ of $\Sigma M\otimes u^*TN$ is written as 
$$\psi=\psi_i\otimes\partial_{y^i}(u),$$
where each $\psi^i$ is a usual spinor on $M$. We also have the following local expression of $\tilde\nabla$
$$\tilde\nabla\psi=\nabla\psi^i\otimes\partial_{y^i}(u)+\Gamma_{jk}^i(u)\nabla u^j\psi^k\otimes\partial_{y^i}(u),$$
where $\Gamma^i_{jk}$ are the Christoffel symbols of the Levi-Civita connection of $N$. The Dirac operator along the map $u$ is defined as
\begin{equation}\label{dirac}
\slashed{D}:=e_\alpha\cdot\tilde\nabla_{e_\alpha}\psi=\slashed\partial\psi^i\otimes\partial_{y^i}(u)+\Gamma_{jk}^i(u)\nabla_{e_\alpha}u^j(e_\alpha\cdot\psi^k)\otimes\partial_{y^i}(u),
\end{equation}
which is self-adjoint \cite{jost2017riemannian}. Sometimes, we use $\slashed{D}_u$ to distinguish the Dirac operators defined on different maps. In \cite{chen2006dirac}, the authors  introduced the  functional
\begin{equation}\begin{split}
L(u,\psi)&:=\frac12\int_M(|du|^2+\langle\psi,\slashed{D}\psi\rangle_{\Sigma M\otimes u^*TN})\\
&=\frac12\int_M h_{ij}(u)g^{\alpha\beta}\frac{\partial u^i}{\partial x^\alpha}\frac{\pt u^j}{\pt x^\beta}+h_{ij}(u)\langle\psi^i,\slashed{D}\psi^j\rangle_{\Sigma M}.
\end{split}
\end{equation}
They computed the Euler-Lagrange equations of $L$:
\begin{equation}\label{eldh1}
\tau^m(u)-\frac12R^m_{lij}\langle\psi^i,\nabla u^l\cdot\psi^j\rangle_{\Sigma M}=0,
\end{equation}
\begin{equation}\label{eldh2}
\slashed{D}\psi^i=\slashed\pt\psi^i+\Gamma_{jk}^i(u)\nabla_{e_\alpha}u^j(e_\alpha\cdot\psi^k)=0,
\end{equation}
where $\tau^m(u)$ is the $m$-th component of the tension field \cite{jost2017riemannian} of the map $u$ with respect to the coordinates on $N$, $\nabla u^l\cdot\psi^j$ denotes the Clifford multiplication of the vector field $\nabla u^l$ with the spinor $\psi^j$, and $R^m_{lij}$ stands for the component of the Riemann curvature tensor of the target manifold $N$. Denote 
$$\mathcal{R}(u,\psi):=\frac12R^m_{lij}\langle\psi^i,\nabla u^l\cdot\psi^j\rangle_{\Sigma M}\pt_{y^m}.$$ We can write \eqref{eldh1} and \eqref{eldh2} in the following global form:
\begin{numcases}{}
\tau(u)=\mathcal{R}(u,\psi), \label{geldh1} \\
\slashed{D}\psi=0,  \label{geldh2}
\end{numcases}
and call the solutions $(u,\psi)$ Dirac-harmonic maps from $M$ to $N$.

By \cite{nash1956imbedding}, we can isometrically embed $N$ into $\mathbb{R}^q$. Then \eqref{geldh1}-\eqref{geldh2} is equivalent to the system
\begin{numcases}{}
		\Delta_g{u}=II(du,du)+Re(P(\mathcal{S}(du(e_\beta),e_{\beta}\cdot\psi);\psi)), \label{Emap }\\
		\slashed{\partial}\psi=\mathcal{S}(du(e_\beta),e_{\beta}\cdot\psi),	\label{Edirac}
	\end{numcases}
where $II$ is the second fundamental form of $N$ in $\mathbb{R}^q$, and 
\begin{equation}
\mathcal{S}(du(e_\beta),e_{\beta}\cdot\psi):=(\nabla{u^A}\cdot\psi^B)\otimes II(\partial_{z^A},\partial_{z^B}),
\end{equation}
\begin{equation}
Re(P(\mathcal{S}(du(e_\beta),e_{\beta}\cdot\psi);\psi)):=P(S(\partial_{z^C},\partial_{z^B});\partial_{z^A})Re(\langle\psi^A,du^C\cdot\psi^B\rangle).
\end{equation}
Here $A,B,C=1,\cdots,q$, $P(\xi;\cdot)$ denotes the shape operator, defined by $\langle P(\xi;X),Y\rangle=\langle A(X,Y),\xi\rangle$ for $X,Y\in\Gamma(TN)$ and $Re(z)$ denotes the real part of $z\in\mathbb{C}$.

The existence of nontrivial Dirac-harmonic maps is a natural and interesting problem. The following is known. When the surface $M$ has boundary, the non-triviality directly follows from that of the boundary values. In \cite{jost2018geometric}, the authors used the heat flow for $\alpha$-Dirac-harmonic maps to get the existence of $\alpha$-Dirac-harmonic maps which are the critical points of the functional 
 \begin{equation}
 L^\alpha(u,\psi)=\frac12\int_M(1+|du|^2)^\alpha+\frac12\int_M\langle\psi,\slashed{D}^u\psi\rangle_{\Sigma M\otimes u^*TN}
 \end{equation}
with a fixed boundary value. To get the existence of Dirac-harmonic maps, Jost-Liu-Zhu considered the limit behavior of a sequence of $\alpha$-Dirac-harmonic maps. In the context of harmonic maps, this was the method of Sacks-Uhlenbeck in \cite{sacks1981existence}. It is well-known that bubbles generally arise as $\alpha\searrow1$. We also know that bubbles come from the rescaling. Therefore, it is necessary to know how the $\alpha$-Dirac-harmonic equations change under rescaling. Precisely, in isothermal coordinates around a point $x_0\in M$, suppose the metric is given by 
\begin{equation}
h=e^{\varphi_0(x)}((dx^1)^2+(dx^2)^2)
\end{equation}
with $\varphi_0(x_0)=0$; we define
\begin{equation}
(\tilde{u}_\alpha(x),\tilde{v}_\alpha(x)):=(u_\alpha(x_0+\lambda_\alpha x),\sqrt{\lambda_\alpha}\psi_\alpha(x_0+\lambda_\alpha x))
\end{equation}
for a small number $\lambda_\alpha>0$. Then $(\tilde{u}_\alpha,\tilde{v}_\alpha)$ satisfies
\begin{numcases}{}
\begin{split}
		\Delta_{h_\alpha}{\tilde{u}_\alpha}&=-(\alpha-1)\frac{\nabla_{h_\alpha}|\nabla_{h_\alpha}\tilde{u}_\alpha|^2\nabla_{h_\alpha}\tilde{u}_\alpha}{\sigma_\alpha+|\nabla_{h_\alpha}\tilde{u}_\alpha|^2}+II(d\tilde{u}_\alpha,d\tilde{u}_\alpha)\\
		&+\frac{Re(P(\mathcal{S}(d\tilde{u}_\alpha(e_\beta),e_{\beta}\cdot\tilde{v}_\alpha);\tilde{v}_\alpha))}{\alpha(1+\sigma_\alpha^{-1}|\nabla_{h_\alpha}\tilde{u}_\alpha|^2)^{\alpha-1}}, \label{rescaled map}
	\end{split}\\
		\slashed{\partial}_{h_\alpha}\tilde{v}_\alpha=\mathcal{S}(d\tilde{u}_\alpha(e_\beta),e_{\beta}\cdot\tilde{v}_\alpha),	\label{rescaled spinor}
	\end{numcases}
	where $h_\alpha=e^{\varphi_0(x_0+\lambda x)}((dx^1)^2+(dx^2)^2)$ and $\sigma_\alpha=\lambda_\alpha^2>0$. In order to get a useful bubbling equation, it is convenient to add another factor $\lambda_{\alpha}^{\alpha-1}$ in the rescaling, i.e.
	\begin{equation}
	({u}_\alpha(x),{v}_\alpha(x)):=(u_\alpha(x_0+\lambda_\alpha x),\lambda_{\alpha}^{\alpha-1}\sqrt{\lambda_\alpha}\psi_\alpha(x_0+\lambda_\alpha x)).
\end{equation}
Then one can check that $({u}_\alpha(x),{v}_\alpha(x))$ satisfies the  system
\begin{numcases}{}
\begin{split}
		\Delta_{h_\alpha}{{u}_\alpha}&=-(\alpha-1)\frac{\nabla_{h_\alpha}|\nabla_{h_\alpha}{u}_\alpha|^2\nabla_{h_\alpha}{u}_\alpha}{\sigma_\alpha+|\nabla_{h_\alpha}{u}_\alpha|^2}+II(d{u}_\alpha,d{u}_\alpha)\\
		&+\frac{Re(P(\mathcal{S}(d{u}_\alpha(e_\beta),e_{\beta}\cdot v_\alpha);{v}_\alpha))}{\alpha(\sigma_\alpha+|\nabla_{h_\alpha}{u}_\alpha|^2)^{\alpha-1}}, \label{bubble map}
	\end{split}\\
		\slashed{\partial}_{h_\alpha}{v}_\alpha=\mathcal{S}(d{u}_\alpha(e_\beta),e_{\beta}\cdot{v}_\alpha).	\label{bubble spinor}
	\end{numcases}
Once having these equations, it is natural to consider the small energy regularity lemma, an important ingredient in the Sacks-Uhlenbeck scheme. In the case of Dirac-harmonic maps, we need a small energy regularity lemma for  both  systems above.
\begin{lem}\label{small energy regularity}\cite{jost2018geometric}
Let $D_1=D_1(0)\subset\mathbb{R}^2$ be the unit disk with a family of metrics as follows:
\begin{equation}
g_\alpha=e^{\varphi_\alpha(x)}((dx^1)^2+(dx^2)^2),
\end{equation}
where $\varphi_\alpha\in C^\infty(D_1)$, $\varphi_\alpha(0)=0$ and $\varphi_\alpha\to\varphi_0$, $g_\alpha\to g_0$ in $C^\infty(D_1)$ as $\alpha\searrow1$. Suppose that $(u_\alpha,\psi_\alpha):(D_1,g_\alpha)\to N$ satisfies the system \eqref{rescaled map}-\eqref{rescaled spinor} or \eqref{bubble map}-\eqref{bubble spinor} with 
\begin{equation}
0<\beta_0<\liminf_{\alpha\searrow1}\sigma_\alpha^{\alpha-1}\leq1
\end{equation}
for some $\beta_0>0$. For any $1<p<\infty$, there exist two positive constants $\epsilon_0$ and $\alpha_0>1$ depending only on $g_0$ ,$N$, such that if $E_\alpha(u_\alpha)+E(\psi_\alpha)\leq\Lambda$ and
\begin{equation}
E(u_\alpha)\leq\epsilon_0, \ 1<\alpha\leq\alpha_0,
\end{equation}
 then there hold
 \begin{equation}
 \|\nabla u_\alpha\|_{W^{1,p}(D_{1/2})}\leq C(p,g_0,\Lambda,N)\|\nabla u_\alpha\|_{L^2(D_1)},
 \end{equation}
 \begin{equation}
 \|\psi_\alpha\|_{W^{1,p}(D_{1/2})}\leq C(p,g_0,\Lambda,N)\|\psi_\alpha\|_{L^4(D_1)}.
 \end{equation}
\end{lem}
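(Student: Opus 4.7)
My plan is to run a coupled $L^p$ bootstrap: first use the Dirac equation to promote the regularity of the spinor from $L^4$ to $W^{1,p}$, then feed the improved spinor into the map equation and apply a Sacks--Uhlenbeck / H\'elein--Rivi\`ere type small-energy argument for $u_\alpha$. The two ``$\alpha$-specific'' features of the system, namely the perturbation $(\alpha-1)\frac{\nabla|\nabla u|^2 \nabla u}{\sigma_\alpha + |\nabla u|^2}$ in the map equation and the prefactor $(\sigma_\alpha + |\nabla u|^2)^{1-\alpha}$ (or its analog in \eqref{rescaled map}) in the cubic spinor--map coupling, are both designed to be harmless in the limit $\alpha \to 1$: the former because its coefficient vanishes, the latter because the hypothesis $\liminf_{\alpha\searrow 1}\sigma_\alpha^{\alpha-1}>\beta_0$ gives a uniform two-sided bound on that factor.

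First I would exploit the pointwise bound $|\slashed\partial_{h_\alpha} v_\alpha|\leq C|du_\alpha|\,|v_\alpha|$ coming from \eqref{rescaled spinor}/\eqref{bubble spinor}. Standard $L^p$ theory for $\slashed\partial_{h_\alpha}$ on the disk, together with a smooth cutoff on $D_r \Subset D_{r'} \Subset D_1$, yields
\[
\|v_\alpha\|_{W^{1,p}(D_r)} \leq C\bigl(\| |du_\alpha|\,|v_\alpha|\|_{L^p(D_{r'})} + \|v_\alpha\|_{L^p(D_{r'})}\bigr).
\]
Starting with $p=4/3$, H\"older gives $\| |du_\alpha|\,|v_\alpha|\|_{L^{4/3}} \leq \|du_\alpha\|_{L^2}\|v_\alpha\|_{L^4} \leq \epsilon_0^{1/2}\Lambda^{1/4}$, and in dimension two $W^{1,4/3}\hookrightarrow L^q$ for every finite $q$, so one iteration already upgrades $v_\alpha$ to $L^q$; repeating on a shrinking sequence of subdisks delivers $v_\alpha \in W^{1,p}(D_{1/2})$ for arbitrary $p$, and each step genuinely improves because $\epsilon_0$ is small. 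On the map side, rewrite the equation as $\Delta_{h_\alpha}u_\alpha = II(du_\alpha,du_\alpha) + R_\alpha$, where $R_\alpha$ lumps together the $(\alpha-1)$-perturbation and the spinor coupling. The leading nonlinearity is exactly that of a harmonic map, so the H\'elein/Rivi\`ere small-energy regularity (equivalently, the original Sacks--Uhlenbeck Hodge trick in two dimensions) produces a $W^{1,p}$ estimate on $u_\alpha$ in terms of $\|du_\alpha\|_{L^2(D_1)}$ once $R_\alpha$ is controlled in $L^p$. The spinor part of $R_\alpha$ is controlled by step one using $|P(\mathcal S(du_\alpha,\cdot v_\alpha);v_\alpha)| \lesssim |du_\alpha|\,|v_\alpha|^2$ together with the uniform bound on the prefactor from $\beta_0$; the $(\alpha-1)$-part is estimated by $\frac{|\nabla u|^2}{\sigma_\alpha + |\nabla u|^2}\leq 1$, so its $L^p$ norm is dominated by $(\alpha-1)\|\nabla^2 u_\alpha\|_{L^p}$, which can be absorbed into the left-hand side by choosing $\alpha_0$ close enough to $1$.

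The principal obstacle is the circularity of the bootstrap: to refine the Dirac estimate one wants $du_\alpha$ small in an $L^q$ norm stronger than $L^2$, while to refine $du_\alpha$ one needs $v_\alpha$ in some $L^q$ with $q>4$. I would break this circularity by noting that the first step of the spinor iteration uses only the \emph{given} hypotheses $\|du_\alpha\|_{L^2}\leq\epsilon_0^{1/2}$ and $\|v_\alpha\|_{L^4}\leq\Lambda^{1/4}$, and each subsequent iteration is performed on a strictly smaller concentric disk, so no new smallness assumption is needed. The thresholds are then fixed in the order: $\epsilon_0$ is chosen as the small-energy constant for the H\'elein/Rivi\`ere estimate for $u_\alpha$, and $\alpha_0>1$ is chosen so that the $(\alpha-1)$-perturbation has operator norm at most $\tfrac12$ on the relevant Sobolev scale and so that $(\sigma_\alpha+|\nabla u_\alpha|^2)^{1-\alpha}$ stays within $[\beta_0/2, C]$. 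Once a joint $W^{1,p}$ bound holds for some $p>2$, Sobolev embedding into $C^{0,\gamma}$ and standard Schauder/elliptic bootstrap close the argument.
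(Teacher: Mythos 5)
Your overall framework---bootstrap the spinor with the Dirac equation, bootstrap the map with the map equation, absorb the $(\alpha-1)$-perturbation by taking $\alpha_0$ close to $1$, and use the hypothesis $\liminf\sigma_\alpha^{\alpha-1}>\beta_0$ to bound the prefactor of the cubic coupling---is the right skeleton and matches the strategy of the proof in \cite{jost2018geometric}. The handling of the two $\alpha$-specific terms is correct.

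However, there is a concrete gap in the step that is supposed to break the circularity, and it is the crucial one. You assert that ``in dimension two $W^{1,4/3}\hookrightarrow L^q$ for every finite $q$.'' This is false: the Sobolev embedding in two dimensions gives $W^{1,p}\hookrightarrow L^{2p/(2-p)}$ for $p<2$, and for $p=4/3$ the critical exponent is exactly $2\cdot\tfrac{4}{3}/(2-\tfrac{4}{3})=4$. Only $W^{1,2}$ embeds into all finite $L^q$. Consequently the first pass through the Dirac equation takes $\psi_\alpha$ from $L^4$ to $W^{1,4/3}\hookrightarrow L^4$ with no gain in integrability, and your iteration on shrinking disks simply reproduces $L^4$ at every stage; the circularity you correctly identified is not broken by this mechanism. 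The same problem propagates to the map equation: with only $du_\alpha\in L^2$ and $\psi_\alpha\in L^4$, both $II(du_\alpha,du_\alpha)$ and the spinor coupling term $\sim|du_\alpha|\,|\psi_\alpha|^2$ lie merely in $L^1$, which is not enough to launch the $L^p$ Calder\'on--Zygmund bootstrap you invoke, and you never explain how the Sacks--Uhlenbeck/H\'elein argument that you refer to is modified to accommodate this additional $L^1$ right-hand side.

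What is missing is a genuine compensation/absorption step that produces the first quantitative improvement. In the standard treatment this is done by a joint Gagliardo--Nirenberg argument that estimates $\|\nabla u_\alpha\|_{L^4}$, $\|\nabla^2 u_\alpha\|_{L^{4/3}}$, $\|\psi_\alpha\|_{L^8}$ and $\|\nabla\psi_\alpha\|_{L^{8/5}}$ simultaneously in terms of each other with small coefficients (provided $E(u_\alpha)\le\epsilon_0$), so that all the coupled terms can be absorbed at once; only after this initial gain does the $L^p$ bootstrap you describe become available. Alternatively one can invoke the div-curl/Hardy-space (Rivi\`ere-type) structure of the map equation, but then one must verify that the spinor coupling and the $(\alpha-1)$-term fit into that structure, which you have not done. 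As written, the proposal does not establish the stated $W^{1,p}$ estimates.
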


\section{$\alpha$-Dirac-harmonic maps from non-degenerating spin surfaces}
In this section, we will show the generalized energy identity for a sequence of $\alpha$-Dirac-harmonic maps from non-degenerating spin surfaces. This will be used later at the blow-up points away from the punctures of the degenerating surfaces.

For a sequence of $\alpha$-Dirac-harmonic maps from a fixed closed surface, the following generalized energy identity was proved in \cite{jost2018geometric}.
\begin{thm}\label{fixed id}\cite{jost2018geometric}
Let $(u_\alpha,\psi_\alpha)$ be a sequence of smooth $\alpha$-Dirac-harmonic maps from a fixed closed spin surface $(M,h,\mathfrak{S})$ to a compact manifold $N$. If $(u_\alpha,\psi_\alpha)$ satisfies the  uniformly bounded energy condition
\begin{equation}
E_\alpha(u_\alpha)+E(\psi_\alpha)\leq\Lambda,
\end{equation}
then there exist a finite set $S=\{x_1,\cdots,x_I\}$, finitely many Dirac-harmonic spheres $(\sigma^{i,l},\xi^{i,l}): S^2\to N$, $i=1,\cdots,I$, $l=1,\cdots,L_i$ and a Dirac-harmonic map $(u,\psi): (M,h,\mathfrak{S})\to N$ such that, after  selection of a subsequence,
\begin{equation}
(u_\alpha,\psi_\alpha)\to(u,\psi) \ \text{in} \  C^\infty_{loc}(M\setminus S)\times C^\infty_{loc}(M\setminus S)
\end{equation}
and
\begin{equation}
\lim_{\alpha\searrow1}E_\alpha(u_\alpha)=E(u)+|M|+\sum_{i=1}^I\sum_{l=1}^{L_i}\mu_{il}^2E(\sigma^{i,l}),
\end{equation}
\begin{equation}
\lim_{\alpha\searrow1}E(\psi_\alpha)=E(\psi)+\sum_{i=1}^I\sum_{l=1}^{L_i}\mu_{il}^2E(\xi^{i,l}),
\end{equation}
where $\mu_{il}\in[1,\frac{\Lambda}{\epsilon_1}]$ and $\epsilon_1>0$ is the constant such that, if $(\sigma,\xi)$ is a smooth Dirac-harmonic sphere satisfying $\int_{S^2}|d\sigma|^2<\epsilon_1$, then both $\sigma$ and $\xi$ are trivial.

In addition, if $\|\psi_\alpha\|_{L^q}$ is also uniformly bounded for some $q>4$, then bubbles are just harmonic spheres. 
\end{thm}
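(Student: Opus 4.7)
The plan is to adapt the Sacks--Uhlenbeck blow-up scheme to the $\alpha$-Dirac-harmonic setting, using the small-energy regularity in Lemma \ref{small energy regularity} as the central analytic tool. First I would identify the concentration set
$$S := \left\{x \in M : \liminf_{r \to 0}\,\liminf_{\alpha \searrow 1} E(u_\alpha; B_r(x)) \geq \epsilon_0/2\right\},$$
with $\epsilon_0$ from Lemma \ref{small energy regularity}; a Vitali-type covering argument together with the uniform bound $E(u_\alpha)\leq E_\alpha(u_\alpha)\leq\Lambda$ forces $S=\{x_1,\dots,x_I\}$ to be finite. Off $S$ I apply Lemma \ref{small energy regularity} with $\sigma_\alpha\equiv 1$ (i.e.\ no rescaling) on disks of small fixed radius, obtaining uniform $W^{2,p}$ bounds on $u_\alpha$ and $W^{1,p}$ bounds on $\psi_\alpha$ for every $p<\infty$; a diagonal subsequence then converges in $C^\infty_{loc}(M\setminus S)$, and passing $\alpha\searrow 1$ in \eqref{Emap }--\eqref{Edirac} identifies the limit $(u,\psi)$ as a Dirac-harmonic map, which extends smoothly across $S$ by the finite-energy removable singularity theorem for Dirac-harmonic maps.

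Second, at each concentration point $x_i$ I would perform a blow-up. Select $x_\alpha\to x_i$ and $\lambda_\alpha\searrow 0$ realizing maximal concentration $E(u_\alpha; B_{\lambda_\alpha}(x_\alpha))=\epsilon_0/4$ in a fixed neighborhood, and set
$$(U_\alpha(x), V_\alpha(x)) := \bigl(u_\alpha(x_\alpha + \lambda_\alpha x),\ \lambda_\alpha^{\alpha - 1/2}\psi_\alpha(x_\alpha + \lambda_\alpha x)\bigr).$$
After extracting a subsequence so that $\sigma_\alpha^{\alpha-1}=\lambda_\alpha^{2(\alpha-1)}\to \mu_{i,1}^{-2}\in(0,1]$, the rescaled pair solves the bubble system \eqref{bubble map}--\eqref{bubble spinor} and is in the scope of Lemma \ref{small energy regularity}, so it converges in $C^\infty_{loc}$ on $\mathbb{R}^2$ minus its own (smaller) concentration set to a Dirac-harmonic map, which extends to a Dirac-harmonic sphere $(\sigma^{i,1},\xi^{i,1})$ via removable singularities. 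I would iterate this procedure at each residual concentration point inside each bubble, the process terminating in finitely many steps by the energy gap $\epsilon_1$; the range $\mu_{il}\in[1,\Lambda/\epsilon_1]$ then follows by comparing the rescaled energy to the total bound $\Lambda$ and the gap $\epsilon_1$.

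The main obstacle is the third step: the \emph{neck analysis}, i.e.\ showing that on each annular neck $A_\alpha(\delta,R):=B_\delta(x_\alpha)\setminus B_{R\lambda_\alpha}(x_\alpha)$ between the base map and a bubble (and between successive bubbles) no energy is lost in the iterated limit $\alpha\searrow 1$, $R\to\infty$, $\delta\to 0$. My plan is to combine a Pohozaev-type identity for the $\alpha$-Dirac-harmonic system in cylindrical coordinates with a three-annulus lemma: one controls the radial versus angular energy imbalance on each dyadic sub-annulus and the $\alpha$-perturbation term via the calibration $\sigma_\alpha^{\alpha-1}\to\mu_{il}^{-2}>0$, while the spinor contribution is absorbed through $L^p$ estimates for $\slashed{D}$ and the small-energy regularity of Lemma \ref{small energy regularity}. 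Once necks are shown to carry zero energy, summing contributions from the base map, each bubble, and the background term $\int_M (1+|du_\alpha|^2)^{\alpha-1}\cdot 1\,dvol_g\to|M|$ produces the displayed identities; the factor $\mu_{il}^2$ arises from the change of variables in $(\sigma_\alpha+|\nabla U_\alpha|^2)^\alpha\,dx$ under the bubble rescaling, and the spinor identity follows analogously (with no $|M|$ term, since the spinor functional carries no constant piece).

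For the final assertion, the plan is to exploit the extra integrability: if $\|\psi_\alpha\|_{L^q}\le C$ for some $q>4$, then by a direct scaling computation
$$\|V_\alpha\|_{L^q(B_R)} \le \lambda_\alpha^{(\alpha - 1/2) - 2/q}\,\|\psi_\alpha\|_{L^q(M)},$$
and the exponent $(\alpha-1/2)-2/q$ is strictly positive for $\alpha$ close to $1$ whenever $q>4$, so $V_\alpha\to 0$ in $L^q_{loc}(\mathbb{R}^2)$, hence in $L^4_{loc}$. Each bubble spinor $\xi^{i,l}$ therefore vanishes identically, and the bubble map equation for $\sigma^{i,l}$ reduces to $\tau(\sigma^{i,l})=0$, i.e.\ $\sigma^{i,l}$ is a harmonic sphere.
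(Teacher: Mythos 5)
Your overall strategy — concentration set via Vitali, off-$S$ convergence by Lemma \ref{small energy regularity} with $\sigma_\alpha\equiv1$, blow-up with the scaling $(u_\alpha(x_\alpha+\lambda_\alpha x),\lambda_\alpha^{\alpha-1/2}\psi_\alpha(x_\alpha+\lambda_\alpha x))$, bubble extraction with termination by the energy gap $\epsilon_1$, and the $L^q$ scaling argument for the final assertion — is the standard Sacks--Uhlenbeck scheme that the paper compresses into the citation of Lemma \ref{local id}. Your last step (showing $V_\alpha\to 0$ in $L^4_{loc}$ when $\|\psi_\alpha\|_{L^q}$, $q>4$, is bounded) is a correct and clean computation.

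However, there is a genuine gap in your neck analysis, and it is exactly the place where $\alpha$-(Dirac-)harmonic maps differ from harmonic maps. You claim both that the necks carry zero energy and that the factor $\mu_{il}^2$ arises from the change of variables in $(\sigma_\alpha+|\nabla U_\alpha|^2)^\alpha\,dx$. These two statements are mutually inconsistent, and the second is simply not what the change of variables produces. For the map part,
\begin{equation*}
E_\alpha\bigl(u_\alpha, B_{R\lambda_\alpha}(x_\alpha)\bigr)\ \approx\ \lambda_\alpha^{2-2\alpha}\int_{B_R}\bigl(\lambda_\alpha^2+|\nabla U_\alpha|^2\bigr)^{\alpha}\,dx\ \longrightarrow\ \mu_{il}\,E(\sigma^{i,l}),
\end{equation*}
since $\mu_{il}=\lim\lambda_\alpha^{2-2\alpha}$ by definition in Lemma \ref{local id}; the rescaling contributes a single power $\mu_{il}$, not $\mu_{il}^2$. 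If the neck truly carried no energy, your accounting would yield $\sum\mu_{il}E(\sigma^{i,l})$ rather than the $\sum\mu_{il}^2E(\sigma^{i,l})$ asserted in the theorem. The missing factor $\mu_{il}(\mu_{il}-1)E(\sigma^{i,l})$ is precisely the neck energy, and it is positive whenever $\mu_{il}>1$ — a case the theorem explicitly allows, since $\mu_{il}\in[1,\Lambda/\epsilon_1]$. This is the whole content of the ``generalized'' energy identity: unlike the harmonic-map case, the Pohozaev/three-annulus analysis for $\alpha$-harmonic systems gives a per-dyadic-annulus bound of order $\alpha-1$, hence a total neck energy of order $(\alpha-1)\log\frac{\delta}{R\lambda_\alpha}$, which does \emph{not} vanish unless $\lambda_\alpha^{\alpha-1}\to1$, i.e.\ $\mu_{il}=1$. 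Your three-annulus argument cannot prove a statement that is false.

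Note also the asymmetry you missed: for the spinor part the paper's rescaling carries the extra factor $\lambda_\alpha^{\alpha-1}$, so $\int_{B_{R\lambda_\alpha}}|\psi_\alpha|^4\,dy=\lambda_\alpha^{4-4\alpha}\int_{B_R}|V_\alpha|^4\,dx\to\mu_{il}^2E(\xi^{i,l})$. Thus the spinor bubble already captures the full $\mu_{il}^2$ factor and its neck contribution does vanish (consistent with exponential decay of $\slashed{D}$-harmonic spinors on long cylinders), whereas the map neck does not. Your phrase ``the spinor identity follows analogously'' is therefore also misleading: the mechanisms producing the $\mu_{il}^2$ factor on the two sides are different.
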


Theorem \ref{fixed id} directly follows from Lemma \ref{small energy regularity} and the following local model case of a single interior blow-up point.
\begin{lem}\label{local id}
Let $D_1=D_1(0)\subset\mathbb{R}^2$ be the unit disk with a family of metrics as follows:
\begin{equation}
g_\alpha=e^{\varphi_\alpha(x)}((dx^1)^2+(dx^2)^2),
\end{equation}
where $\varphi_\alpha\in C^\infty(D_1)$, $\varphi_\alpha(0)=0$ and $\varphi_\alpha\to\varphi_0$, $g_\alpha\to g_0$ in $C^\infty(D_1)$ as $\alpha\searrow1$. Suppose that  a sequence of solutions $(u_\alpha,\psi_\alpha)\in C^\infty(D_1,N)$ to the system \eqref{rescaled map}-\eqref{rescaled spinor} satisfies:
\begin{equation}
\sup\limits_{\alpha}(E_{\alpha,\sigma_\alpha}(u_\alpha)+E(\psi_\alpha))\leq\Lambda,
\end{equation}
\begin{equation}
0<\beta_0<\liminf_{\alpha\searrow1}\sigma_\alpha^{\alpha-1}\leq1
\end{equation}
and 
\begin{equation}
(u_\alpha,\psi_\alpha)\to(u,\psi) \ \text{in} \ C^\infty_{loc}(D_1\setminus\{0\}) \ \text{as} \ \alpha\searrow1.
\end{equation}
Then there exist a subsequence of $(u_\alpha,\psi_\alpha)$ (still denoted by $(u_\alpha,\psi_\alpha)$) and a nonnegative integer $L_1$ such that for any $i=1,\cdots,L_1$, there exist sequences of positive numbers $\lambda^i_\alpha$ and nontrivial Dirac-harmonic spheres $(\sigma^i,\xi^i)$ such that 
\begin{equation}
\lim_{\delta\to0}\lim_{\alpha\searrow1}E_{\alpha,\sigma_\alpha}(u_\alpha,D_\delta)=\sum_{i=1}^{L_1}\mu_{i}^2E(\sigma^i),
\end{equation}
\begin{equation}
\lim_{\delta\to0}\lim_{\alpha\searrow1}E(\psi_\alpha,D_\delta)=\sum_{i=1}^{L_1}\mu_{i}^2E(\xi^i),
\end{equation}
where $\mu_i=\lim\limits_{\alpha\searrow1}(\lambda^i_\alpha)^{2-2\alpha}$.
\end{lem}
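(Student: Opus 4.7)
The plan is to carry out the bubble-tree concentration-compactness analysis at the blow-up point $0$, adapted to the $\alpha$-Dirac-harmonic setting. Bubbles are extracted one at a time at points of maximal energy concentration, and the asserted identity then splits into three pieces: each extracted bubble is a nontrivial Dirac-harmonic sphere; no energy is lost on the annular neck regions linking the bubble to the base; and the multiplicity factor $\mu_i^2$ arises from the change of variables in the rescaling. First, if $\lim_{\delta\to 0}\lim_{\alpha\searrow 1} E(u_\alpha,D_\delta)\le\epsilon_0$, then Lemma \ref{small energy regularity} together with the uniform bound $E_{\alpha,\sigma_\alpha}(u_\alpha)+E(\psi_\alpha)\le\Lambda$ upgrades the $C^\infty_{loc}(D_1\setminus\{0\})$ convergence to $W^{1,p}(D_{\delta/2})$ control, which forces $\lim_{\delta\to 0}\lim_{\alpha\searrow 1} E_{\alpha,\sigma_\alpha}(u_\alpha,D_\delta)=0$ and $\lim_{\delta\to 0}\lim_{\alpha\searrow 1} E(\psi_\alpha,D_\delta)=0$; in this case $L_1=0$ and there is nothing to prove.

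Otherwise, I select $x_\alpha\to 0$ and $\lambda_\alpha\to 0$ such that
\begin{equation*}
E(u_\alpha,D_{\lambda_\alpha}(x_\alpha))=\sup_{x\in D_{\delta}}E(u_\alpha,D_{\lambda_\alpha}(x))=\epsilon_0/2,
\end{equation*}
and rescale via $(U_\alpha(x),V_\alpha(x)):=(u_\alpha(x_\alpha+\lambda_\alpha x),\lambda_\alpha^{\alpha-1/2}\psi_\alpha(x_\alpha+\lambda_\alpha x))$, so that $(U_\alpha,V_\alpha)$ satisfies the bubble system \eqref{bubble map}-\eqref{bubble spinor} on a family of domains exhausting $\mathbb{R}^2$ with $E(U_\alpha,D_1(y))\le\epsilon_0/2$ on every unit disk. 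Lemma \ref{small energy regularity} applied to this system yields uniform $W^{1,p}_{loc}(\mathbb{R}^2)$ bounds and, after a subsequence, a smooth limit $(\sigma^1,\xi^1)$. The explicit $(\alpha-1)$-term in \eqref{bubble map} has a coefficient tending to zero with bounded prefactor, while $\alpha(\sigma_\alpha+|\nabla U_\alpha|^2)^{\alpha-1}$ stays pinched using the hypothesis $\sigma_\alpha^{\alpha-1}\in[\beta_0,1]$ on regions where $|\nabla U_\alpha|$ is bounded, so the limit $(\sigma^1,\xi^1)$ solves the Dirac-harmonic map system on $\mathbb{R}^2$; the removable-singularity theorem for Dirac-harmonic maps (see \cite{chen2005regularity,zhu2009dirac}) extends it to a smooth Dirac-harmonic sphere $S^2\to N$, nontrivial by the normalization $E(U_\alpha,D_1)=\epsilon_0/2$.

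I expect the main obstacle to be the no-neck-energy statement on the annuli $A_\alpha(\delta,R):=D_\delta\setminus D_{R\lambda_\alpha}(x_\alpha)$. The plan is to decompose each neck into dyadic sub-annuli, apply the small-energy regularity of Lemma \ref{small energy regularity} on each one, and combine with a Pohozaev-type identity for the $\alpha$-Dirac-harmonic system to obtain decay of the angular average of the map energy together with an $L^4$-decay of the spinor, following and $\alpha$-perturbing the schemes of \cite{zhao2007energy} and \cite{jost2018geometric}. The $(\alpha-1)$-correction terms are controlled using $\sigma_\alpha^{\alpha-1}\in[\beta_0,1]$ together with the uniform energy bound, so that $\int_{A_\alpha}|du_\alpha|^2dvol+\int_{A_\alpha}|\psi_\alpha|^4dvol\to 0$ as first $\alpha\searrow 1$, then $R\to\infty$, then $\delta\to 0$. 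Finally, a direct change of variables $y=x_\alpha+\lambda_\alpha x$ produces the multiplicity factors: the spinor $L^4$-energy on $D_{R\lambda_\alpha}(x_\alpha)$ equals $\lambda_\alpha^{4-4\alpha}=(\lambda_\alpha^{2-2\alpha})^2$ times the $L^4$-energy of $V_\alpha$ on $D_R$, and the $\alpha$-energy transforms by the same factor in the limit, giving $\mu_i^2 E(\sigma^i)$ and $\mu_i^2 E(\xi^i)$ respectively with $\mu_i=\lim_{\alpha\searrow 1}(\lambda_\alpha^i)^{2-2\alpha}$. If concentration persists at additional points after the first rescaling, iterate the construction; the energy gap $\epsilon_1>0$ from Theorem \ref{fixed id} and the total bound $\Lambda$ force termination after at most $\Lambda/\epsilon_1$ steps, so $L_1<\infty$ and the proof is complete.
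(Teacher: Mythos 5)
The paper does not actually prove Lemma~\ref{local id}: it is quoted as the local ingredient behind Theorem~\ref{fixed id}, which is attributed to \cite{jost2018geometric}. So the comparison must be against the argument in that reference, whose structure (bubble extraction, neck analysis via Pohozaev, iteration) your proposal does follow in outline. However, there are two substantive errors, which happen to compensate and therefore conceal a real gap.

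First, the change-of-variables step for the map $\alpha$-energy is wrong. With $y=x_\alpha+\lambda_\alpha x$, $u_\alpha(y)=U_\alpha(x)$, one has
\begin{equation*}
E_{\alpha,\sigma_\alpha}\bigl(u_\alpha, D_{R\lambda_\alpha}(x_\alpha)\bigr)
=\lambda_\alpha^{2-2\alpha}\int_{D_R}\bigl(\sigma_\alpha\lambda_\alpha^2+|\nabla U_\alpha|^2\bigr)^{\alpha}\, dvol
\ \longrightarrow\ \mu_i\, E(\sigma^i,D_R),
\end{equation*}
with a \emph{single} factor $\mu_i=\lim(\lambda_\alpha^i)^{2-2\alpha}$, not $\mu_i^2$. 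The quadratic factor $\mu_i^2$ for the spinor comes from the extra $\lambda_\alpha^{\alpha-1}$ inserted in the spinor rescaling, and there is no analogous extra factor for the map, so the $\alpha$-energy does \emph{not} "transform by the same factor."

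Second, and more fundamentally, the no-neck-energy claim that $\int_{A_\alpha}|du_\alpha|^2+\int_{A_\alpha}|\psi_\alpha|^4\to0$ is false for the $\alpha\searrow1$ limit whenever $\mu_i>1$. Indeed the entire content of the \emph{generalized} energy identity is that the necks carry energy: the neck converges to a geodesic in $N$ whose length is governed by $\mu_i$, and the map's neck energy is exactly what is needed to promote the bubble's $\mu_i E(\sigma^i)$ to the stated $\mu_i^2 E(\sigma^i)$. The Pohozaev-type identity in the Li--Wang and Jost--Liu--Zhu arguments is used to \emph{evaluate} this nonvanishing neck contribution, not to make it vanish. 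The necklessness statement in \cite{zhao2007energy} concerns a fixed Dirac-harmonic sequence ($\alpha$ fixed, or $\alpha=1$), and does not transfer to the simultaneous limit $\alpha_n\searrow1$ coupled to the rescalings $\lambda^i_\alpha$. Thus the hardest part of the argument — computing the neck energy precisely — is missing, and as written the proposal would prove the ordinary energy identity (coefficient $1$ on $E(\sigma^i)$), which is incorrect.
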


It is then natural to ask what happens when the domain of $\alpha$-Dirac-harmonic maps $(u_\alpha,\psi_\alpha)$ varies. In this section, we consider the  simplest case where $(M_n,h_n,c_n)$ converges to a closed hyperbolic Riemann surface $(M,h,c)$. By the pull-forward discussed in the previous section, we view $(u_n,\psi_n):=(u_{\alpha_n},\psi_{\alpha_n})$ as a sequence of $\alpha_n$-Dirac-harmonic maps defined on $(M,\bar{h}_n,\bar{c}_n,\mathfrak{S})$ with respect to $(\bar{c}_n,\nabla_n)$, where $\nabla_n$ is the connection on the spinor bundle $\Sigma M$ coming from $\bar{h}_n$ and $\alpha_n\searrow1$. Then we have the following energy identity:
\begin{thm}\label{nondegenerate}
With the  notations  above, we assume that $(u_n,\psi_n)$ satisfies the uniformly bounded energy assumption 
\begin{equation}
E_{\alpha_n}(u_n,\bar{h}_n)+E(\psi_n,\bar{h}_n)\leq\Lambda.
\end{equation}
Then 
there exist a finite set $S=\{x_1,\cdots,x_I\}$, finitely many Dirac-harmonic spheres $(\sigma^{i,l},\xi^{i,l}): S^2\to N$, $i=1,\cdots,I$, $l=1,\cdots,L_i$ and a Dirac-harmonic map $(u,\psi): (M,h,\mathfrak{S})\to N$ such that, after  selection of a subsequence,
\begin{equation}
(u_n,\psi_n)\to(u,\psi) \ \text{in} \  C^\infty_{loc}(M\setminus S)\times C^\infty_{loc}(M\setminus S)
\end{equation}
and
\begin{equation}
\lim_{n\to\infty}E_{\alpha_n}(u_n,\bar{h}_n)=E(u,h)+|M|_h+\sum_{i=1}^I\sum_{l=1}^{L_i}\mu_{il}^2E(\sigma^{i,l}),
\end{equation}
\begin{equation}
\lim_{n\to\infty}E(\psi_n,\bar{h}_n)=E(\psi,h)+\sum_{i=1}^I\sum_{l=1}^{L_i}\mu_{il}^2E(\xi^{i,l}),
\end{equation}
where $\mu_{il}\in[1,\frac{\Lambda}{\epsilon_1}]$ and $\epsilon_1>0$ is the constant such that, if $(\sigma,\xi)$ is a smooth Dirac-harmonic sphere satisfying $\int_{S^2}|d\sigma|^2<\epsilon_1$, then both $\sigma$ and $\xi$ are trivial.
\end{thm}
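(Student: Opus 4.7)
The strategy is to adapt the fixed-domain argument of Theorem \ref{fixed id} to the setting where the conformal structures vary, by localising everything in small isothermal discs where the metrics $\bar{h}_n$ already converge smoothly to $h$. First I would define the blow-up set
$$
S := \Bigl\{x\in M \;\Big|\; \liminf_{n\to\infty} E\bigl(u_n, B_r^{\bar{h}_n}(x)\bigr) \ge \epsilon_0 \text{ for every } r>0\Bigr\},
$$
where $\epsilon_0$ is the constant from Lemma \ref{small energy regularity} applied with the limiting metric $h$ as base; the uniform energy bound forces $|S|\le \Lambda/\epsilon_0$, so $S=\{x_1,\dots,x_I\}$ is finite. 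Because $\bar{h}_n \to h$ in $C^\infty_{loc}(M)$, on any small disc around a point of $M\setminus S$ the conformal factor $\varphi_n$ of $\bar{h}_n$ satisfies the hypotheses of Lemma \ref{small energy regularity}, and the $\alpha_n$-Dirac-harmonic system for $(u_n,\psi_n)$ on $(M,\bar{h}_n)$ coincides with \eqref{rescaled map}--\eqref{rescaled spinor} with $\sigma_{\alpha_n}=1$. The lemma therefore gives uniform $W^{1,p}$ bounds for $u_n$ and $\psi_n$ on any compact $K\subset M\setminus S$; standard elliptic bootstrap, together with the smooth convergence of metrics and spin connections, yields a subsequence with $(u_n,\psi_n)\to(u,\psi)$ in $C^\infty_{loc}(M\setminus S)$.

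Passing to the limit in the $\alpha_n$-Dirac-harmonic system and using $\alpha_n\searrow 1$, $\bar h_n\to h$, one sees that the limit $(u,\psi)$ is a smooth Dirac-harmonic map on $M\setminus S$ with $E(u,h)+E(\psi,h)\le \Lambda$. The removable singularity theorem for Dirac-harmonic maps (\cite{chen2005regularity,zhao2007energy}) then lets us extend $(u,\psi)$ to a smooth Dirac-harmonic map on all of $M$.

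Next I would do the bubble analysis at each $x_i\in S$. Pick isothermal coordinates for $h$ around $x_i$ identifying a neighbourhood with $D_1$ and $h=e^{\varphi_0}((dx^1)^2+(dx^2)^2)$ with $\varphi_0(0)=0$; then $\bar h_n = e^{\varphi_n}((dx^1)^2+(dx^2)^2)$ with $\varphi_n\to\varphi_0$ in $C^\infty(D_1)$ (normalising $\varphi_n(0)=0$ by a harmless shift). Choose $\lambda_n^{i,1}\to 0$ so that the rescaled maps
$$
(\tilde u_n(x),\tilde\psi_n(x)) := \bigl(u_n(\lambda_n^{i,1} x),\,\sqrt{\lambda_n^{i,1}}\,\psi_n(\lambda_n^{i,1} x)\bigr)
$$
satisfy the system \eqref{rescaled map}--\eqref{rescaled spinor} with $\sigma_{\alpha_n} = (\lambda_n^{i,1})^2$, and pick the scale so that $E(\tilde u_n, D_1)\le \epsilon_0$ while concentration occurs. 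Up to extraction, $(\tilde u_n,\tilde\psi_n)$ converges on $\mathbb{R}^2\setminus\{\text{finitely many points}\}$ to a nontrivial Dirac-harmonic map on $\mathbb{R}^2$, which by conformal invariance and removable singularities extends to a Dirac-harmonic sphere $(\sigma^{i,1},\xi^{i,1}):S^2\to N$. Iterating this procedure at each surviving concentration point gives, by Lemma \ref{local id}, finitely many bubbles $(\sigma^{i,l},\xi^{i,l})$, $l=1,\dots,L_i$, with the concentration energies
$$
\lim_{\delta\to 0}\lim_{n\to\infty} E_{\alpha_n}(u_n, B_\delta(x_i)) = \sum_{l=1}^{L_i}\mu_{il}^2\,E(\sigma^{i,l}),\qquad \lim_{\delta\to 0}\lim_{n\to\infty} E(\psi_n, B_\delta(x_i)) = \sum_{l=1}^{L_i}\mu_{il}^2\,E(\xi^{i,l}),
$$
where $\mu_{il} = \lim_n (\lambda_n^{i,l})^{2-2\alpha_n}\in[1,\Lambda/\epsilon_1]$.

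Finally I would assemble the energy identity. Split
$$
E_{\alpha_n}(u_n,\bar h_n) = E_{\alpha_n}\!\Bigl(u_n, M\setminus \textstyle\bigcup_i B_\delta(x_i)\Bigr) + \sum_{i=1}^I E_{\alpha_n}(u_n, B_\delta(x_i)).
$$
On the complement of the small discs the smooth convergence $(u_n,\psi_n)\to(u,\psi)$ and $\bar h_n\to h$, together with $(1+|du_n|^2)^{\alpha_n}\to 1+|du|^2$ pointwise and dominated convergence (using the $W^{1,p}$ bounds), give
$$
E_{\alpha_n}\!\Bigl(u_n, M\setminus \textstyle\bigcup_i B_\delta(x_i)\Bigr)\longrightarrow \int_{M\setminus \cup_i B_\delta(x_i)} (1+|du|^2)\,dvol_h,
$$
which converges to $|M|_h + E(u,h)$ as $\delta\to 0$. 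Combining with the bubble contributions from the previous paragraph yields the claimed identity; the argument for $E(\psi_n,\bar h_n)$ is analogous, using the smooth convergence of $\psi_n$ on $M\setminus S$ (with no $|M|$ term because the spinor energy has no constant summand). The main obstacle I anticipate is the \emph{no-neck} step, i.e.\ showing that no energy is lost in the annular regions between the body map $u$ and the bubbles: one must prove that for every fixed $i$,
$$
\lim_{R\to\infty}\lim_{\delta\to 0}\lim_{n\to\infty} E(u_n, B_\delta(x_i)\setminus B_{R\lambda_n^{i,L_i}}(x_i)) = 0,
$$
and similarly for $\psi_n$. This follows by the three-circle / Pohozaev type argument of \cite{jost2018geometric} once one verifies that the spinor decays sufficiently fast on the neck and that the $\alpha_n$-correction in \eqref{rescaled map} contributes $o(1)$; since $\bar h_n\to h$ smoothly on the neck domain and the argument is local and conformally covariant, the analysis carries over essentially verbatim from the fixed-metric case.
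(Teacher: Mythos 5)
Your proposal is correct and takes essentially the same route as the paper. The paper's own proof is a one-paragraph remark: since $(\bar{h}_n,\bar{c}_n)\to(h,c)$ and hence $\nabla_n\to\nabla$ all converge in $C^\infty_{loc}(M)$, the constants in Lemma \ref{small energy regularity} and Lemma \ref{local id} (both of which are already stated for a sequence of metrics $g_\alpha\to g_0$ smoothly) are uniform in $n$, so the argument for Theorem \ref{fixed id} carries over verbatim; you have simply written out in full what the paper cites by reference. One small remark: the no-neck step you single out as the ``main obstacle'' is already contained in the statement of Lemma \ref{local id} (the concentration energy is exactly accounted for by the bubbles), so you do not need to re-derive the three-circle/Pohozaev estimate --- it suffices, as the paper does, to observe that the hypotheses of Lemma \ref{local id} hold with uniform constants because the metrics converge smoothly.
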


\begin{proof}
Since $(\bar{c}_n,\nabla_n)\to(h,c)$ in $C^\infty(M)$ as $n\to\infty$, all the geometric data converges in $C^\infty(M)$. In particular, $\nabla_n-\nabla\to0$ in $C^\infty(M)$, where $\nabla$ is the connection on $\Sigma M$ coming from $h$. Therefore, by the uniformly bounded energy assumption, we can assume $(u_n,\psi_n)$ weakly converges to some $(u,\psi)$ in $W^{1,2}(M,N)\times L^4(\Sigma M\otimes\mathbb{R}^K)$ with respect to $(c,\nabla)$, where we have isometrically embedded $N$ into $\mathbb{R}^K$. Note that all the constants in the small energy regularity lemma (Lemma \ref{small energy regularity}) and the local version of generalized energy identity (Lemma \ref{local id}) are uniform with respect to $\bar{h}_n$ and $\bar{c}_n$. Hence, we have the  generalized energy identity.
 \end{proof}

Moreover, it follows from the following lemma that $\|\psi_\alpha\|_{L^q}$ is also uniformly bounded for some $q>4$. Therefore, the bubbles are just harmonic spheres.
 \begin{lem}\label{spinor Lq norm}
 Let $M$ be a compact spin Riemann surface with boundary $\partial M$, N be a compact Riemann manifold. Let $u\in W^{1,2\alpha}(M,N)$ for some $\alpha>1$ and $\psi\in W^{1,p}(M, \Sigma M\otimes u^*TN)$ for $1<p<2$, then there exists a positive constant $C=C(p,M,N,\|\nabla u\|_{L^{2\alpha}})$ such that
 \begin{equation}
 \|\psi\|_{W^{1,p}(M)}\leq C(\|\slashed{D}\psi\|_{L^{p}(M)}+\|\psi\|_{L^p(M)}).
 \end{equation}
 \end{lem}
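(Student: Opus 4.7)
The plan is to reduce the twisted Dirac operator $\slashed{D}$ to the untwisted one $\slashed{\partial}$ and then combine standard elliptic theory with an interpolation/absorption argument. Starting from the local formula \eqref{dirac},
\begin{equation*}
\slashed{\partial}\psi^i \;=\; (\slashed{D}\psi)^i \;-\; \Gamma^i_{jk}(u)\,\nabla_{e_\beta}u^j\,(e_\beta\cdot\psi^k),
\end{equation*}
and the classical $L^p$ estimate for the usual Dirac operator on the compact spin surface $M$ (with or without boundary, following from $\slashed{\partial}$ being a first order elliptic operator with a parametrix modulo compact remainders),
\begin{equation*}
\|\psi\|_{W^{1,p}(M)} \;\leq\; C_1(M,p)\bigl(\|\slashed{\partial}\psi\|_{L^p(M)}+\|\psi\|_{L^p(M)}\bigr),
\end{equation*}
one gets
\begin{equation*}
\|\psi\|_{W^{1,p}(M)} \;\leq\; C_1\bigl(\|\slashed{D}\psi\|_{L^p}+C(N)\,\|\,|\nabla u|\,|\psi|\,\|_{L^p}+\|\psi\|_{L^p}\bigr),
\end{equation*}
so everything reduces to controlling the product $|\nabla u|\,|\psi|$ in $L^p$.

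The key step is to choose the Hölder exponents compatibly with a subcritical Sobolev embedding. Let $q$ be defined by $\tfrac1p = \tfrac{1}{2\alpha}+\tfrac1q$, i.e.\ $q=\tfrac{2\alpha p}{2\alpha-p}$; since $1<p<2$ and $\alpha>1$ we have $p<q<\tfrac{2p}{2-p}$, so $q$ lies \emph{strictly below} the critical $2$-dimensional Sobolev exponent for $W^{1,p}$. By Hölder,
\begin{equation*}
\|\,|\nabla u|\,|\psi|\,\|_{L^p(M)} \;\leq\; \|\nabla u\|_{L^{2\alpha}(M)}\,\|\psi\|_{L^q(M)}.
\end{equation*}
Applying Gagliardo--Nirenberg on the compact surface $M$ with the interpolation parameter $\theta$ determined by $\tfrac1q=\tfrac1p-\tfrac{\theta}{2}$, one computes $\theta=\tfrac1\alpha\in(0,1)$, giving
\begin{equation*}
\|\psi\|_{L^q(M)} \;\leq\; C(M)\,\|\psi\|_{W^{1,p}(M)}^{1/\alpha}\,\|\psi\|_{L^p(M)}^{1-1/\alpha}.
\end{equation*}

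Finally I would apply Young's inequality with dual exponents $\alpha$ and $\alpha/(\alpha-1)$:
\begin{equation*}
\|\nabla u\|_{L^{2\alpha}}\,\|\psi\|_{W^{1,p}}^{1/\alpha}\,\|\psi\|_{L^p}^{(\alpha-1)/\alpha}
\;\leq\; \varepsilon\,\|\psi\|_{W^{1,p}} \;+\; C_\varepsilon\,\|\nabla u\|_{L^{2\alpha}}^{\alpha/(\alpha-1)}\,\|\psi\|_{L^p}.
\end{equation*}
Choosing $\varepsilon=1/(2C_1 C(N) C(M))$ lets me absorb the $W^{1,p}$-term to the left and produces the desired constant $C=C(p,M,N,\|\nabla u\|_{L^{2\alpha}})$. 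The main obstacle, which is really just a bookkeeping check, is verifying that $\alpha>1$ is precisely what makes the Sobolev exponent $q$ subcritical and the interpolation exponent $\theta=1/\alpha$ strictly less than $1$; the critical case $\alpha=1$ would force $\theta=1$, Young's inequality would collapse, and the absorption trick would fail. Everything else is standard elliptic machinery (parametrix for $\slashed{\partial}$ and GN on a compact manifold with boundary), used off the shelf.
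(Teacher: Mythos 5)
Your proof is essentially correct, but it follows a genuinely different route from the paper's. The paper does not re-derive the elliptic estimate at all: it simply applies the preceding Lemma (\ref{spinor norm with boundary}, cited from \cite{jost2018geometric}, which controls $\|\psi\|_{W^{1,p}}$ by $\|\slashed D\psi\|_{L^p}$ plus a chiral-boundary term) to a cut-off $\eta\psi$; the cut-off makes the boundary term vanish and produces the $\|\psi\|_{L^p}$ error via the commutator $\slashed D(\eta\psi)-\eta\slashed D\psi=\nabla\eta\cdot\psi$. You instead open up the proof of Lemma \ref{spinor norm with boundary} itself: write $\slashed\partial\psi=\slashed D\psi-\Gamma(u)\nabla u\cdot\psi$, invoke the $L^p$ parametrix estimate for $\slashed\partial$, estimate the coupling term by H\"older (with $\tfrac1p=\tfrac1{2\alpha}+\tfrac1q$), apply Gagliardo--Nirenberg (getting $\theta=1/\alpha<1$ precisely because $\alpha>1$), and absorb by Young. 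Your bookkeeping is right (the computation $q<2p/(2-p)$ iff $\alpha>1$ and $\theta=1/\alpha$ both check), and your observation that $\alpha>1$ is exactly what makes $q$ subcritical and the absorption nondegenerate is the conceptual heart of why the hypothesis $u\in W^{1,2\alpha}$ with $\alpha>1$ is needed. What your approach buys is self-containedness and transparency; what the paper's approach buys is brevity and it keeps the boundary analysis where it belongs, namely inside the cited lemma.

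One point to tighten: your parenthetical ``with or without boundary, following from $\slashed\partial$ being a first order elliptic operator with a parametrix modulo compact remainders'' is not quite right on a surface \emph{with} boundary -- a parametrix only gives interior estimates, and the global $W^{1,p}$ estimate you invoke requires an elliptic boundary condition (such as the chiral condition $\mathcal B$ appearing in Lemma \ref{spinor norm with boundary}). To match the lemma exactly as stated you should either assume $M$ closed (which is how it is actually used later, to bound $\|\psi_n\|_{L^q(M_n)}$ on the closed surfaces $M_n$), or run your whole estimate on $\eta\psi$ for a cut-off $\eta$ supported in the interior, exactly as the paper does, accepting that the conclusion is then an interior estimate. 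The paper's own one-line proof has the same implicit restriction, so this is a shared imprecision in the statement rather than a defect peculiar to your argument, but your remark asserting the boundary case is unjustified as written.
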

 
 \begin{proof}
Applying the following lemma to $\eta\psi$ for some cut-off $\eta$, we complete the proof.
\end{proof}
 \begin{lem}\label{spinor norm with boundary}\cite{jost2018geometric}
 Let $M$ be a compact spin Riemann surface with boundary $\partial M$, N be a compact Riemann manifold. Let $u\in W^{1,2\alpha}(M,N)$ for some $\alpha>1$ and $\psi\in W^{1,p}(M, \Sigma M\otimes u^*TN)$ for $1<p<2$, then there exists a positive constant $C=C(p,M,N,\|\nabla u\|_{L^{2\alpha}})$ such that
 \begin{equation}
 \|\psi\|_{W^{1,p}(M)}\leq C(\|\slashed{D}\psi\|_{L^{p}(M)}+\|\mathcal{B}\psi\|_{W^{1-1/p,p}(\partial M)}),
 \end{equation}
 where $\mathcal{B}$ is the Chiral boundary operator for spinors along a map.
 \end{lem}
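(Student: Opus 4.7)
The plan is to reduce the boundary estimate for the twisted Dirac operator $\slashed{D}$ along $u$ to the classical $L^p$ boundary regularity for the untwisted spin Dirac operator $\slashed{\partial}$ with chiral boundary condition, and then treat the nonlinear twisting by $u^*TN$ as a perturbation that is absorbed via a compact Sobolev embedding. The first input is that $\mathcal{B}$ is a local elliptic boundary condition for $\slashed{\partial}$ in the Lopatinski--Shapiro sense, as is classical for the Dirac operator. Standard $L^p$ theory for elliptic boundary-value problems on compact manifolds then yields, for every spinor section $\varphi$ on $M$ and $1<p<\infty$,
$$\|\varphi\|_{W^{1,p}(M)} \le C(p,M)\Bigl(\|\slashed{\partial}\varphi\|_{L^p(M)} + \|\mathcal{B}\varphi\|_{W^{1-1/p,p}(\partial M)} + \|\varphi\|_{L^p(M)}\Bigr).$$
I would apply this componentwise to the $\psi^i$ after isometrically embedding $N \hookrightarrow \mathbb{R}^K$.

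Next, the local formula \eqref{dirac} gives $\slashed{\partial}\psi^i = \slashed{D}\psi^i - \Gamma^i_{jk}(u)(\nabla_{e_\alpha} u^j)(e_\alpha \cdot \psi^k)$, so the correction is bounded pointwise by $C|\nabla u||\psi|$. Hölder's inequality with conjugate exponents $2\alpha$ and $s = 2\alpha p/(2\alpha - p)$ yields
$$\bigl\||\nabla u||\psi|\bigr\|_{L^p(M)} \le \|\nabla u\|_{L^{2\alpha}(M)}\,\|\psi\|_{L^s(M)}.$$
A short computation using $\alpha>1$ gives $s < 2p/(2-p)$, so on the two-dimensional surface $M$ the Sobolev embedding $W^{1,p}(M) \hookrightarrow L^s(M)$ is compact. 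Ehrling's inequality then supplies, for every $\varepsilon>0$, a constant $C_\varepsilon$ with $\|\psi\|_{L^s} \le \varepsilon\|\psi\|_{W^{1,p}} + C_\varepsilon\|\psi\|_{L^p}$. Choosing $\varepsilon$ small enough in terms of $\|\nabla u\|_{L^{2\alpha}}$ and the constant from the first step lets me absorb the $W^{1,p}$ piece into the left-hand side, producing the intermediate estimate
$$\|\psi\|_{W^{1,p}(M)} \le C'\bigl(\|\slashed{D}\psi\|_{L^p(M)} + \|\mathcal{B}\psi\|_{W^{1-1/p,p}(\partial M)} + \|\psi\|_{L^p(M)}\bigr),$$
with $C' = C'(p,M,N,\|\nabla u\|_{L^{2\alpha}})$.

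The main obstacle is disposing of the residual $\|\psi\|_{L^p}$ term to reach the clean form in the statement. I would argue by a Fredholm / contradiction argument. Because $\slashed{D}-\slashed{\partial}$ has $L^{2\alpha}$ coefficients, it is a compact perturbation of $(\slashed{\partial},\mathcal{B})$ on $W^{1,p}\to L^p\oplus W^{1-1/p,p}(\partial M)$, so $(\slashed{D},\mathcal{B})$ is Fredholm with finite-dimensional kernel. Suppose the stated inequality fails; extract a normalized sequence $\psi_k$ with $\|\psi_k\|_{W^{1,p}}=1$ and $\|\slashed{D}\psi_k\|_{L^p}+\|\mathcal{B}\psi_k\|_{W^{1-1/p,p}(\partial M)}\to 0$. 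The intermediate estimate forces $\|\psi_k\|_{L^p}\ge c>0$, and compactness of $W^{1,p}\hookrightarrow L^p$ produces a nontrivial limit $\psi_\infty \in \ker(\slashed{D},\mathcal{B})$. Working modulo this finite-dimensional kernel via the Fredholm projection (on which the intermediate estimate can be iterated without the $L^p$ tail) then delivers the desired inequality with a constant depending only on $p,M,N,\|\nabla u\|_{L^{2\alpha}}$. The delicate point here is the uniform Fredholm picture for the $u$-dependent operator; it is controlled by the fact that the coefficient of the perturbation sits in $L^{2\alpha}$ with $\alpha>1$, which is exactly the hypothesis of the lemma.
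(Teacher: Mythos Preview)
The paper does not prove this lemma; it is quoted verbatim from \cite{jost2018geometric}, so there is no in-paper argument to compare against. Your reduction to the untwisted boundary estimate plus a Hölder/Ehrling absorption is the natural route and your intermediate inequality
\[
\|\psi\|_{W^{1,p}(M)} \le C'\bigl(\|\slashed{D}\psi\|_{L^p(M)} + \|\mathcal{B}\psi\|_{W^{1-1/p,p}(\partial M)} + \|\psi\|_{L^p(M)}\bigr)
\]
is correct. The gap is in the last paragraph, where you try to drop the $\|\psi\|_{L^p}$ term.

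Your contradiction argument produces a nonzero $\psi_\infty\in\ker(\slashed{D},\mathcal{B})$, and then you propose to ``work modulo this finite-dimensional kernel.'' But if that kernel is nontrivial the stated inequality is simply false: any nonzero $\psi\in\ker(\slashed{D},\mathcal{B})$ makes the right-hand side vanish while the left-hand side does not. Passing to a complement only yields an estimate on the quotient, not the one claimed, and the projection constant would depend on the specific map $u$ (through the kernel of $\slashed{D}^u$), not merely on $\|\nabla u\|_{L^{2\alpha}}$. What is actually needed---and is the genuine content behind this lemma in \cite{jost2018geometric}---is that the chiral boundary operator $\mathcal{B}$ forces $\ker(\slashed{D},\mathcal{B})=\{0\}$. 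This is a specific feature of the chiral condition (it is what makes the twisted Dirac boundary problem uniquely solvable) and must be established separately; once you have it, your contradiction argument closes immediately without any quotient. A second, smaller issue: even after fixing this, your compactness argument is run for a fixed $u$, so a priori it gives $C'=C'(u)$. To obtain the dependence $C=C(p,M,N,\|\nabla u\|_{L^{2\alpha}})$ asserted in the lemma you must let $u$ vary along the contradiction sequence as well and use that an $L^{2\alpha}$ bound on $\nabla u$ controls the perturbation uniformly.
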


\section{Dirac-harmonic maps from degenerating spin surfaces}
We divide this section into two parts. In the first part, we discuss the compactness of a sequence of $\alpha$-(Dirac-)harmonic maps from closed Riemann (spin) surfaces degenerating to a hyperbolic Riemann (spin) surface. In the second part, based on the compactness result in the first part, we prove an existence result about  (Dirac-)harmonic maps from degenerating surfaces.

\subsection{Compactness and Energy identity }

The following theorem is the main result of this subsection.
 \begin{thm}\label{id dh}
Let $(M_n,h_n,c_n,\mathfrak{S}_n)$ be a sequence of closed hyperbolic surfaces of genus $g>1$ degenerating to a hyperbolic Riemann surface $(M,h,c,\mathfrak{S})$ by collapsing finitely many pairwise disjoint simple closed geodesics $\{\gamma_n^j, j\in J\}$. Denote by $l_n^j$ the length of $\gamma_n^j$ and $\l_n:=\max\limits_{j\in J}\{l_n^j\}$. We choose a sequence of constants, $\{\alpha_n\in(1,2)\}$, such that 
\begin{equation}\label{choice of alpha}
\lim_{n\to\infty}\bigg(\frac{2l_n}{\sqrt{\sinh{\frac{l_n}{2}}}}\bigg)^{\alpha_n-1}=0.
\end{equation}
For each $n$, suppose that $(u_n,\psi_n)$ is an $\alpha_n$-Dirac-harmonic map from $(M_n,h_n,c_n,\mathfrak{S})$ into a fixed compact manifold $N$ with nonpositive curvature which satisfies
\begin{equation}
E_{\alpha_n}(u_n)+E(\psi_n)\leq\Lambda,
\end{equation}
 for some positive constant $\Lambda$. Moreover, we assume that the first positive eigenvalue $ \lambda_1(h_n)$ of the usual Dirac operator $\slashed{\partial}_{h_n}$ stays away from zero, i.e.
 \begin{equation}\label{1st eigenvalue}
 \lambda_1(h_n)\geq c_0
 \end{equation}
 for some positive constant $c_0>0$. Then there exists a Dirac-harmonic map $(u,\psi):(M,h,c,\mathfrak{S})\to N$ such that, after  selection of a subsequence,
 \begin{equation}\label{loc convergence}
 (u_n,\psi_n)\to(u,\psi) \ \text{in} \ C^{\infty}_{loc}(M)\times C^{\infty}_{loc}(M)
 \end{equation}
 and 
 \begin{equation}\label{cid map}
 \lim_{n\to\infty}E(u_n,h_n,M_n)=E(u,h,M),
 \end{equation}
  \begin{equation}\label{cid spinor}
 \lim_{n\to\infty}E(\psi_n,h_n,M_n)=E(\psi,h,M).
 \end{equation}
 \end{thm}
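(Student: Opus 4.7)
The plan is to couple the fixed-domain compactness result of Section~3 with a no-neck-energy statement on each degenerating collar. I will first transport $(u_n,\psi_n)$ to the limit surface $M$ via the degeneration diffeomorphisms $\tau_n$, so that $(u_n,\psi_n)$ becomes a sequence of $\alpha_n$-Dirac-harmonic maps with respect to metrics $\bar h_n=(\tau_n)_\ast h_n$, and by the definition of degeneration $\bar h_n\to h$ in $C^\infty_{\mathrm{loc}}(M)$. On any relatively compact $K\subset M$ the hypotheses of Theorem~\ref{nondegenerate} apply (the constants in Lemma~\ref{small energy regularity} are uniform as $\alpha\searrow 1$), giving a subsequential limit $(u,\psi)$ that is Dirac-harmonic on $M$ away from a finite blow-up set $S$. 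To eliminate $S$, I will apply Lemma~\ref{spinor Lq norm} with $\slashed D\psi_n=0$ and $\|du_n\|_{L^{2\alpha_n}}\le \Lambda^{1/(2\alpha_n)}$ to obtain a uniform $L^q$-bound on $\psi_n$ for some $q>4$. By the addendum to Theorem~\ref{fixed id} every bubble must then be a genuine harmonic sphere, and a harmonic sphere into a nonpositively curved target is necessarily constant; this contradicts the quantization $\mu_{il}\ge 1$, $E(\sigma^{i,l})\ge \epsilon_1$, so no bubbles form and \eqref{loc convergence} holds.

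The main content is then the no-neck-energy statement. Writing $M_n=M_n^{\mathrm{thick}}\sqcup\bigsqcup_{j\in J} P_n^j$, the thick-part contribution to the energy already passes to the limit by Step~1, so it suffices to prove
\begin{equation*}
\lim_{n\to\infty}\bigl(E(u_n,P_n^j)+E(\psi_n,P_n^j)\bigr)=0, \qquad j\in J.
\end{equation*}
For the map part I will work on the flat cylinder $(t,\theta)\in[a_n,b_n]\times S^1$ underlying $P_n^j$, using conformal invariance of the $L^2$-energy. Since $\mathrm{Sec}_N\le 0$, the $\alpha_n$-Euler--Lagrange equation yields a Bochner-type subharmonicity inequality for $|du_n|^{2\alpha_n}$ in which the spinor terms are controlled by the $L^q$-bound from Step~1. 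Covering the cylinder by overlapping flat unit disks and iterating Lemma~\ref{small energy regularity} together with a Hölder step against the $\alpha_n$-energy gives an estimate of the shape
\begin{equation*}
\int_{P_n^j}|du_n|^2 \;\le\; C\,\Bigl(\tfrac{2l_n}{\sqrt{\sinh(l_n/2)}}\Bigr)^{2(\alpha_n-1)}\bigl(E_{\alpha_n}(u_n)+1\bigr),
\end{equation*}
whose right-hand side tends to $0$ by hypothesis \eqref{choice of alpha}. For the spinor I will use $\slashed D\psi_n=0$ in the form $|\slashed\partial_{h_n}\psi_n|\le C|du_n||\psi_n|$; the spectral gap $\lambda_1(h_n)\ge c_0$ then provides
\begin{equation*}
\|\psi_n-P_{\ker\slashed\partial_{h_n}}\psi_n\|_{L^2(M_n)}\;\le\; c_0^{-1}\|\slashed\partial_{h_n}\psi_n\|_{L^2(M_n)},
\end{equation*}
and combining with decay of harmonic-spinor components on the thin part of a hyperbolic surface plus an elliptic bootstrap for $\slashed\partial$ yields $\|\psi_n\|_{L^4(P_n^j)}\to 0$.

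The principal obstacle is the map-side collar estimate: one must reconcile the collapsing injectivity radius at the center of the collar (where Lemma~\ref{small energy regularity} is delicate, since the rescaling parameter $\sigma_\alpha$ has to be compared to the scale $l_n$) with the $\alpha_n$-perturbation of the harmonic map equation, showing that the degeneration factor in \eqref{choice of alpha} precisely absorbs the loss. A secondary difficulty, on the spinor side, is handling the kernel of $\slashed\partial_{h_n}$ across the degeneration; the uniform first-positive-eigenvalue bound \eqref{1st eigenvalue} is exactly what rules out harmonic-spinor ghosts condensing into the neck in the limit.
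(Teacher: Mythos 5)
Your Step 1 (transport via $\tau_n$, apply Theorem~\ref{nondegenerate} on the thick part, kill bubbles via the $L^q$-bound and the fact that harmonic spheres into a nonpositively curved target are constant) matches the paper exactly. The no-neck step is where you diverge, and where your plan has real gaps.

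The paper's collar argument is purely measure-theoretic and much simpler than what you propose: it uses no local elliptic estimates on the collar at all. One computes that the $\delta_n$-thin subcollar $P_n^{\delta_n}$ has area at most $\tfrac{2l_n}{\sqrt{\sinh(l_n/2)}}$, and then a single H\"older inequality against the \emph{given} $\alpha_n$-energy bound gives
\begin{equation*}
E(u_n,P_n^{\delta_n})\le\Bigl(\int_{P_n^{\delta_n}}|du_n|^{2\alpha_n}\Bigr)^{1/\alpha_n}\bigl(\mathrm{Area}(P_n^{\delta_n})\bigr)^{1-1/\alpha_n}\le\Lambda\bigl(\mathrm{Area}\bigr)^{1-1/\alpha_n},
\end{equation*}
which tends to $0$ precisely by hypothesis \eqref{choice of alpha}. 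The spinor is handled the same way: Lemma~\ref{spinor Lq norm} gives $\|\psi_n\|_{L^q(M_n)}\le C$ for some fixed $q>4$, with the constant uniform in $n$ \emph{because} of the spectral-gap assumption \eqref{1st eigenvalue}, and then $E(\psi_n,P_n^{\delta_n})\le C\,(\mathrm{Area})^{1-4/q}\to 0$. That is the whole neck estimate. The ``principal obstacle'' you identify (reconciling the collapsing injectivity radius at the collar center with the small-energy regularity lemma) simply does not arise, because one never needs $W^{1,p}$-estimates inside the collar -- only integrability of what is already given.

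Two concrete problems with your alternative route. First, the Bochner-plus-disk-covering plan for the map part requires applying Lemma~\ref{small energy regularity} on disks whose metric degenerates; the lemma's hypotheses ($\varphi_\alpha\to\varphi_0$ in $C^\infty(D_1)$, $\liminf\sigma_\alpha^{\alpha-1}>\beta_0$) are not met uniformly as you approach the central geodesic, and you would also need small energy on each disk before the lemma even applies, which you don't have a priori. You flag this yourself, but it is not a difficulty to be ``reconciled'' -- it is a sign the route is wrong. Second, your spinor plan via the kernel projection and ``decay of harmonic-spinor components on the thin part'' is unsound: the spectral gap $\lambda_1(h_n)\ge c_0$ constrains the nonzero spectrum, not the kernel, and harmonic spinors on a hyperbolic surface do \emph{not} in general decay along a collar -- for the trivial (Ramond) spin structure along the collapsing geodesic they can concentrate there. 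The paper never needs any pointwise control of $\psi_n$ on the collar; the uniform global $L^q$-bound plus the vanishing collar area is all that is used. Your sketch also needs $\|du_n\|_{L^4}$ to control $\|\slashed\partial_{h_n}\psi_n\|_{L^2}$, but the energy bound only gives $du_n\in L^{2\alpha_n}$ with $2\alpha_n$ possibly well below $4$, so that step is not justified either.
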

 
 \begin{proof}
We first consider the  case  $|J|=1$. For $0<\delta<\arcsinh 1$, let 
\begin{equation}
M^{\delta}=\{x\in M, {\rm inj}(z;h)\geq\delta\}
\end{equation} be the $\delta$-thick part of the hyperbolic surface $(M,h)$. As explained in Section 2, there are diffeomorphisms $\tau_n: M_n\setminus\gamma_n\to M$ such that $((\tau_n)_*h_n,(\tau_n)_*c_n)\to(h,c)$ in $C^\infty_{loc}(M)$. We set 
\begin{equation}
\bar{u}_n:=(\tau_n)_*u_n,  \  \bar{v}_n:=(\tau_n)_*\psi_n, \  \bar{h}_n:=(\tau_n)_*h_n, \  \bar{c}_n:=(\tau_n)_*c_n
\end{equation}
and consider the following sequence of $\alpha_n$-Dirac-harmonic maps
\begin{equation}
(\bar{u}_n,\bar{v}_n): (\Sigma,\bar{h}_n,\bar{c}_n,\mathfrak{S})\to N.
\end{equation}
Then, for each fixed $\delta>0$, we have
\begin{equation}
(\bar{h}_n,\bar{c}_n)\to(h,c) \ \text{in} \ C^{\infty}(M^\delta).
\end{equation}
We choose a  sequence $\delta_n\searrow0$ such that $M^{\delta_n}$ exhaust $M$. By Theorem \ref{nondegenerate} and a standard diagonal argument, there exists a Dirac-harmonic map $(u,\psi):(M,h,c)\to N$ such that the following hold
 \begin{equation}
  \lim_{n\to\infty}E_{\alpha_n}(\bar{u}_n,\bar{h}_n,M^{\delta_n})=E(u,h,M)+|M|_h+\sum_{i=1}^I\sum_{l=1}^{L_i}\mu_{il}^2E(\sigma^{i,l}),
\end{equation}
\begin{equation}
\lim_{n\to\infty}E(\bar{v}_n,\bar{h}_n,M^{\delta_n})=E(\psi,h,M)+\sum_{i=1}^I\sum_{l=1}^{L_i}\mu_{il}^2E(\xi^{i,l}).
\end{equation}
Moreover, by Lemma \ref{spinor Lq norm} and our assumption on the target manifold $N$, we get \eqref{loc convergence} and 
 \begin{equation}\label{away from degenerate map}
 \lim_{n\to\infty}E(u_n,h_n,\tau_n^{-1}(M^{\delta_n}))=\lim_{n\to\infty}E(\bar{u}_n,\bar{h}_n,M^{\delta_n})=E(u,h,M),
\end{equation}
\begin{equation}\label{away from degenerate spinor}
 \lim_{n\to\infty}E(\psi_n,h_n,\tau_n^{-1}(M^{\delta_n}))=\lim_{n\to\infty}E(\bar{v}_n,\bar{h}_n,M^{\delta_n})=E(\psi,h,M).
\end{equation}

To recover the energy concentration at the punctures $(\mathcal{P}^1,\mathcal{P}^2)$, we need to study $(\bar{u}_n,\bar{v}_n)$ on $M\setminus M^{\delta_n}$, or equivalently $(u_n,\psi_n)$ on $M_n\setminus\tau_n^{-1}(M^{\delta_n})$. For each $n,\delta$, $M_n\setminus\tau_n^{-1}(M^\delta)$ is not the $\delta$-thin part of $(M_n,h_n)$. However, for fixed $\delta>0$ and sufficiently large $n$, $M_n\setminus\tau_n^{-1}(M^\delta)$ is almost the $\delta$-thin part of $(M_n,h_n)$.

To see this, fix $\delta>0$ small and let $x\in M$ be a point with ${\rm inj}(x;h)=\delta$. Since $\bar{h}_n\to h$ in $C^\infty_{loc}$ on $M$, for any $\delta_1,\delta_2>0$ such that $\delta_1<\delta<\delta_2$, we have
\begin{equation}\label{inj estimate}
\delta_1<{\rm inj}(x;\bar{h}_n)<\delta_2 \ \text{for large} \ n.
\end{equation}
Recall that for $0<\delta<\arcsinh 1$, the $\delta$-thin part of a hyperbolic surface is either an annulus or a cusp. For $n\geq1$ and $\delta\in[\frac{l_n}{2},\arcsinh 1]$, let us see what the $\delta$-thin part of $(M_n,h_n)$ looks like. Recall that $P_n$ is the cylinder collar about $\gamma_n$. Now, we define the following $\delta$-subcollar of $P_n$
 \begin{equation}
 P_n^{\delta}:=[T_n^{1,\delta},T_n^{2,\delta}]\times S^1\subset P_n,
 \end{equation}
 where 
 \begin{equation}
 T_n^{1,\delta}=\frac{2\pi}{l_n}\arcsin\bigg(\frac{\sinh(\frac{l_n}{2})}{\sinh\delta}\bigg), \  T_n^{2,\delta}=\frac{2\pi^2}{l_n}-\frac{2\pi}{l_n}\arcsin\bigg(\frac{\sinh(\frac{l_n}{2})}{\sinh\delta}\bigg).
 \end{equation}
 By \eqref{inj}, $P_n^{\delta}$ is exactly the $\delta$-thin part of $(M_n,h_n)$, namely
 \begin{equation}\label{delta thin}
 P_n^\delta=\{x\in M_n: {\rm inj}(x;h_n)\leq\delta\}.
 \end{equation} 
 Thus, it follows from \eqref{inj estimate} and \eqref{delta thin} that 
 \begin{equation}\label{almost}
 P_n^{\delta_1}\subset M_n\setminus\tau_n^{-1}(M^\delta)\subset P_n^{\delta_2} \ \text{for all} \ n \ \text{large enough}. 
 \end{equation}
 If we choose $\delta_1,\delta_2$ in \eqref{almost} sufficiently close to $\delta$, then for $n$ large enough, $M_n\setminus\tau_n^{-1}(M^\delta)$ is almost the $\delta$-thin part $P_n^\delta$ of $(M_n,h_n)$.
 
 Now, for $\delta>0$ small and $n$ large enough, we define
 \begin{equation}
 \Omega_n^\delta:=\{(M_n\setminus\tau_n^{-1}(M^\delta))\setminus P_n^\delta\}\cup\{P_n^\delta\setminus(M_n\setminus\tau_n^{-1}(M^\delta))\}.
 \end{equation}
 Then the image of $\Omega_n^\delta$ under $\tau_n$ is uniformly away from the punctures of $M$ and actually converges to $\partial M^\delta$. Therefore, we have
 \begin{equation}
 \lim_{n\to\infty}E(u_n,\Omega_n^\delta)=0,
 \end{equation}
  \begin{equation}
 \lim_{n\to\infty}E(\psi_n,\Omega_n^\delta)=0.
 \end{equation}
Thus, after passing to a subsequence, we conclude that 
  \begin{equation}\label{reduce map}
 \lim_{n\to\infty}E(u_n,(M_n\setminus\tau_n^{-1}(M^{\delta_n}))= \lim_{n\to\infty}E(u_n,P_n^{\delta_n}),
 \end{equation}
  \begin{equation}\label{reduce spinor}
 \lim_{n\to\infty}E(\psi_n,(M_n\setminus\tau_n^{-1}(M^{\delta_n}))= \lim_{n\to\infty}E(\psi_n,P_n^{\delta_n}).
 \end{equation}
 For the right-hand side of \eqref{reduce map}-\eqref{reduce spinor}, by the H$\rm\ddot{o}$lder inequality, we get
 \begin{equation}\label{map energy cylinder}
 \begin{split}
 \lim_{n\to\infty}E(u_n,P_n^{\delta_n})&=\lim_{n\to\infty}\int_{P_n^{\delta_n}}|du_n|^2\\
 &\leq\lim_{n\to\infty}(\int_{P_n^{\delta_n}}|du_n|^2)^{\frac{1}{\alpha_n}}({\rm Area}(P_n^{\delta_n}))^{1-\frac{1}{\alpha_n}}\\
 &\leq\Lambda\lim_{n\to\infty}({\rm Area}(P_n^{\delta_n}))^{1-\frac{1}{\alpha_n}}
 \end{split}
 \end{equation}
 and
 \begin{equation}\label{spinor cylinder}
 \lim_{n\to\infty}E(\psi_n,P_n^{\delta_n})=\lim_{n\to\infty}\int_{P_n^{\delta_n}}|\psi_n|^4\leq\lim_{n\to\infty}\|\psi_n\|_{L^q(M_n)}^4({\rm Area}(P_n^{\delta_n}))^{1-\frac{4}{q}}
\end{equation}
 for some $q>4$. By Lemma \ref{spinor Lq norm}, we have
 \begin{equation}
 \|\psi_n\|_{L^q(M_n)}\leq C(q,M_n,N,\Lambda).
 \end{equation}
 Note that when $(M_n,h_n)$ varies, the constant above actually depends on the lower bound of $\lambda_1(h_n)$. Therefore, by our assumption \eqref{1st eigenvalue} on the first eigenvalue of $\slashed{\partial}_{h_n}$, we have a uniform bound for $\|\psi_n\|_{L^q(M_n)}$, and \eqref{spinor cylinder} becomes 
  \begin{equation}\label{spinor energy cylinder}
 \lim_{n\to\infty}E(\psi_n,P_n^{\delta_n})\leq C_0\lim_{n\to\infty}({\rm Area}(P_n^{\delta_n}))^{1-\frac{4}{q}}
\end{equation}
 for some $C_0>0$.
 
 It remains to consider $\lim\limits_{n\to\infty}({\rm Area}(P_n^{\delta_n}))^{1-\frac{1}{\alpha_n}}
$. By the definition of $P_n^{\delta_n}$, the area of $P_n^{\delta_n}$ can be computed as
\begin{equation}
\begin{split}
{\rm Area}(P_n^{\delta_n})&=\int_{T_n^{1,\delta_n}}^{T_n^{2,\delta_n}}\int_0^{2\pi}\bigg(\frac{l_n}{2\pi\sin(\frac{l_nt}{2\pi})}\bigg)^2dtd\theta\\
&=2\pi\int_{T_n^{1,\delta_n}}^{T_n^{2,\delta_n}}\frac{l_n^2}{4\pi^2}\frac{1}{\sin^2(\frac{l_nt}{2\pi})}dt\\
&=2\pi\int_{\frac{l_n}{2\pi}T_n^{1,\delta_n}}^{\frac{l_n}{2\pi}T_n^{2,\delta_n}}\frac{l_n^2}{4\pi^2}\frac{1}{\sin^2s}\frac{2\pi}{l_n}ds\\
&=l_n\int_{\frac{l_n}{2\pi}T_n^{1,\delta_n}}^{\frac{l_n}{2\pi}T_n^{2,\delta_n}}\frac{1}{\sin^2s}ds\\
&=-l_n(\cot(\pi-\arcsin\varphi)-\cot(\arcsin\varphi))\\
&=2l_n\sqrt{\frac{\sinh\delta_n}{\sinh{\frac{l_n}{2}}}-1}\\
&\leq\frac{2l_n}{\sqrt{\sinh{\frac{l_n}{2}}}},
\end{split}\end{equation}
where $s=\frac{l_nt}{2\pi}$ and $\varphi=\frac{\sinh{\frac{l_n}{2}}}{\sinh\delta_n}$. Thus,
\begin{equation}\label{area spinor}
\begin{split}
\lim_{n\to\infty}({\rm Area}(P_n^{\delta_n}))^{1-\frac{4}{q}}&=(\lim_{n\to\infty}{\rm Area}(P_n^{\delta_n}))^{1-\frac{4}{q}}\\
&\leq(\lim_{n\to\infty}\frac{2l_n}{\sqrt{\sinh{\frac{l_n}{2}}}})^{1-\frac{4}{q}}\\
&=(\lim_{n\to\infty}8\sqrt{\sinh{\frac{l_n}{2}}})^{1-\frac{4}{q}}=0
\end{split}
\end{equation}
and 
\begin{equation}\label{area map}
\begin{split}
\lim_{n\to\infty}({\rm Area}(P_n^{\delta_n}))^{1-\frac{1}{\alpha_n}}&=\lim_{n\to\infty}({\rm Area}(P_n^{\delta_n}))^{\alpha_n-1}\\
&\leq\lim_{n\to\infty}(\frac{2l_n}{\sqrt{\sinh{\frac{l_n}{2}}}})^{\alpha_n-1}\\
&=0,
\end{split}
\end{equation}
where we have used the assumption \eqref{choice of alpha} in the last equality. Plugging \eqref{area spinor} and \eqref{area map} into \eqref{spinor energy cylinder} and \eqref{map energy cylinder}, we get
 \begin{equation}\label{energy cylinder map} 
 \lim_{n\to\infty}E(u_n,P_n^{\delta_n})=0
  \end{equation}
  and
   \begin{equation}\label{energy cylinder spinor}
 \lim_{n\to\infty}E(\psi_n,P_n^{\delta_n})=0.
\end{equation}
Last, by combining \eqref{away from degenerate map}-\eqref{away from degenerate spinor}, \eqref{reduce map}-\eqref{reduce spinor} and \eqref{energy cylinder map}-\eqref{energy cylinder spinor}, we have \eqref{cid map}-\eqref{cid spinor} in the case of $|J|=1$. By the thick-thin decomposition of hyperbolic surfaces in Section 2, both the short simple closed geodesics of lengths less than $2\arcsinh 1$ and the corresponding $(\arcsinh 1)$-thin parts of the collars around them are pairwisely disjoint, Hence we can deal with the corresponding subcollars separately, and the  case just studied applies. This completes the proof.

 \end{proof}

The preceding proof directly yields a similar theorem for harmonic maps.
\begin{thm}\label{id h}
Let $(\Sigma_n,h_n,c_n)$ be a sequence of closed hyperbolic surfaces of genus $g>1$ degenerating to a hyperbolic Riemann surface $(\Sigma,h,c)$ by collapsing finitely many pairwise disjoint simple closed geodesics $\{\gamma_n^j, j\in J\}$. Denote by $l_n^j$ the length of $\gamma_n^j$ and $\l_n:=\max\limits_{j\in J}\{l_n^j\}$. We choose a sequence of constants, $\{\alpha_n\in(1,2)\}$, such that 
\begin{equation}
\lim_{n\to\infty}\bigg(\frac{2l_n}{\sqrt{\sinh{\frac{l_n}{2}}}}\bigg)^{\alpha_n-1}=0.
\end{equation}
For each $n$, suppose that $u_n$ is an $\alpha_n$-harmonic map from $(\Sigma_n,h_n,c_n)$ into a fixed compact manifold $N$ with nonpositive curvature which satisfies
\begin{equation}
E_{\alpha_n}(u_n)\leq\Lambda,
\end{equation}
 for some positive constant $\Lambda$. Then there exists a harmonic map $u:(\Sigma,h,c)\to N$ such that, after  selection of a subsequence,
 \begin{equation}
 u_n\to u\ \text{in} \ C^{\infty}_{loc}(\Sigma)
 \end{equation}
 and 
 \begin{equation}
 \lim_{n\to\infty}E(u_n,h_n,\Sigma_n)=E(u,h,\Sigma).
 \end{equation}
\end{thm}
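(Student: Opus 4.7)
The plan is to run the argument of Theorem~\ref{id dh} with the spinor component switched off; the simplifications are substantial because nonpositive curvature of $N$ removes bubbling essentially for free, and no eigenvalue hypothesis analogous to \eqref{1st eigenvalue} is needed.

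First I would reduce to the case $|J|=1$ via the thick-thin decomposition: the collars around distinct pinching geodesics are pairwise disjoint, so their contributions can be handled one at a time and summed. Next, pull back by $\tau_n$ exactly as above: set $\bar u_n := (\tau_n)_* u_n$ with metrics $\bar h_n := (\tau_n)_* h_n \to h$ in $C^\infty_{\mathrm{loc}}(\Sigma)$. On the $\delta$-thick part $\Sigma^\delta$, the metrics and complex structures converge smoothly, so the harmonic-map analog of Theorem~\ref{nondegenerate}, which is just the Sacks-Uhlenbeck compactness result adapted to a varying background metric, applies: after passing to a subsequence, $\bar u_n \to u$ in $C^\infty_{\mathrm{loc}}(\Sigma \setminus S)$ for some finite bubble set $S$, with an energy identity involving harmonic-sphere bubbles. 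Because $N$ is nonpositively curved, every harmonic map $S^2 \to N$ is constant (classical, via the Eells-Sampson Bochner formula), so there are no bubbles and $S = \emptyset$. Exhausting $\Sigma$ by $\Sigma^{\delta_n}$ with $\delta_n \searrow 0$ and extracting a diagonal subsequence then produces a smooth harmonic map $u : (\Sigma,h,c) \to N$ with $\bar u_n \to u$ in $C^\infty_{\mathrm{loc}}(\Sigma)$ and
\[
E(u_n, h_n, \tau_n^{-1}(\Sigma^{\delta_n})) \longrightarrow E(u,h,\Sigma).
\]

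It then remains to show that the energy on the degenerating collar vanishes in the limit. As in \eqref{reduce map}, the transition region $\Omega_n^{\delta_n}$ has vanishing energy by the local smooth convergence away from the punctures, so it suffices to bound $E(u_n, P_n^{\delta_n})$. Hölder's inequality, exactly as in \eqref{map energy cylinder}, gives
\[
E(u_n, P_n^{\delta_n}) \le \left( \int_{P_n^{\delta_n}} |du_n|^{2\alpha_n} \right)^{1/\alpha_n} \bigl( \mathrm{Area}(P_n^{\delta_n}) \bigr)^{1 - 1/\alpha_n} \le \Lambda \bigl( \mathrm{Area}(P_n^{\delta_n}) \bigr)^{1 - 1/\alpha_n}.
\]
The area estimate $\mathrm{Area}(P_n^{\delta_n}) \le 2 l_n / \sqrt{\sinh(l_n/2)}$ computed in the proof of Theorem~\ref{id dh}, together with the assumption on $\alpha_n$ and the fact that $l_n \to 0$, forces the right-hand side to zero. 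Combining this collar vanishing with the thick-part convergence yields the desired global energy identity.

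There is no genuine new obstacle beyond what is already addressed in the proof of Theorem~\ref{id dh}: the spinor machinery (Lemma~\ref{spinor Lq norm} and the $L^q$ bound powered by the eigenvalue condition \eqref{1st eigenvalue}) used there to control the $\psi_n$-energy on the collar is entirely absent here, and the elimination of bubbling, which in the Dirac-harmonic setting required care, is immediate from nonpositive curvature. The argument is therefore strictly easier than its Dirac-harmonic counterpart.
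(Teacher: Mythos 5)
Your proposal is correct and follows essentially the same route as the paper, which simply remarks that ``the preceding proof [of Theorem~\ref{id dh}] directly yields a similar theorem for harmonic maps'': the thick-thin reduction, pull-back by $\tau_n$, diagonal argument on $\Sigma^{\delta_n}$, elimination of harmonic-sphere bubbles by nonpositive curvature, and the H\"older-plus-collar-area estimate all match. You also correctly observe the two simplifications the paper leaves implicit, namely that the eigenvalue hypothesis \eqref{1st eigenvalue} and Lemma~\ref{spinor Lq norm} are needed only to control $\psi_n$ and so drop out entirely here.
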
 

We conclude this subsection with  a remark on the two theorems above. First, different from \cite{zhu2009dirac}, we do not need to restrict the type of degeneration in Theorem \ref{id dh}. Second, it follows from our cleaner energy identity that the limit map $u$ (or $u$) is nontrivial under the necessary condition $\lim\limits_{n\to\infty}E(u_n)\neq0$ (or $\lim\limits_{n\to\infty}E(u_n)\neq0$). Last, for fixed $\alpha>1$, a sequence of $\alpha$-Dirac-harmonic maps always has a convergent subsequence with no bubbles (see \cite{jost2019alpha}), and one can similarly prove the following theorems for $\alpha$-(Dirac-)harmonic maps from degenerating surfaces into an arbitrary compact target manifold $N$.

\begin{thm}\label{id h alpha}
Let $(\Sigma_n,h_n,c_n)$ be a sequence of closed hyperbolic surfaces of genus $g>1$ degenerating to a hyperbolic Riemann surface $(\Sigma,h,c)$ by collapsing finitely many pairwise disjoint simple closed geodesics $\{\gamma_n^j, j\in J\}$. Denote by $l_n^j$ the length of $\gamma_n^j$ and $\l_n:=\max\limits_{j\in J}\{l_n^j\}$. For each $n$, suppose that $u_n$ is a $\alpha(>1)$-harmonic map from $(\Sigma_n,h_n,c_n)$ into a fixed compact manifold $N$ which satisfies
\begin{equation}
E_{\alpha}(u_n)\leq\Lambda,
\end{equation}
 for some positive constant $\Lambda$. Then there exists an $\alpha$-harmonic map $u:(\Sigma,h,c)\to N$ such that, after  selection of a subsequence,
 \begin{equation}
 u_n\to u\ \text{in} \ C^{\infty}_{loc}(\Sigma)
 \end{equation}
 and 
 \begin{equation}
 \lim_{n\to\infty}E(u_n,h_n,\Sigma_n)=E(u,h,\Sigma).
 \end{equation}
\end{thm}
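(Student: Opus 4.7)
My approach would be to mimic the proof of Theorem \ref{id h} but exploit the fact that $\alpha>1$ is held fixed, which both eliminates the need for a restriction on $\alpha_n$ and makes the compactness on the thick part straightforward. Start by handling the case $|J|=1$. Using the diffeomorphisms $\tau_n:\Sigma_n\setminus\gamma_n\to\Sigma$ coming from the degeneration, set $\bar u_n:=(\tau_n)_*u_n$, $\bar h_n:=(\tau_n)_*h_n$, $\bar c_n:=(\tau_n)_*c_n$; by hypothesis $(\bar h_n,\bar c_n)\to(h,c)$ in $C^\infty_{loc}(\Sigma)$. Choose $\delta_n\searrow 0$ so that the thick parts $\Sigma^{\delta_n}$ exhaust $\Sigma$, and split the energy into the contribution from the thick part $\tau_n^{-1}(\Sigma^{\delta_n})$ and the thin part of $(\Sigma_n,h_n)$ near $\gamma_n$.

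On the thick part, the key point is that for fixed $\alpha>1$ the $\alpha$-harmonic map equation is uniformly (quasilinear) elliptic, and the uniform bound $E_\alpha(u_n)\le\Lambda$ together with the Sobolev embedding $W^{1,2\alpha}\hookrightarrow C^0$ in dimension $2$ gives uniform $L^\infty$ control of $\bar u_n$. Standard regularity for quasilinear elliptic systems then yields uniform $C^\infty_{loc}$ bounds on every fixed $\Sigma^\delta$, and no bubbling can occur --- this is precisely the remark made in the paper that for fixed $\alpha>1$ a sequence of $\alpha$-(Dirac-)harmonic maps always has a convergent subsequence without bubbles. A diagonal argument over the exhaustion by $\Sigma^{\delta_n}$ produces a subsequence $\bar u_n\to u$ in $C^\infty_{loc}(\Sigma)$ with $u$ an $\alpha$-harmonic map on $(\Sigma,h,c)$, and in particular
\[
\lim_{n\to\infty}E(u_n,h_n,\tau_n^{-1}(\Sigma^{\delta_n}))=E(u,h,\Sigma).
\]

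For the thin part, proceed exactly as in the proof of Theorem \ref{id dh}: introduce the subcollars $P_n^{\delta_n}\subset P_n$ about $\gamma_n$, verify that the symmetric difference between $\Sigma_n\setminus\tau_n^{-1}(\Sigma^{\delta_n})$ and $P_n^{\delta_n}$ has image converging to $\partial\Sigma^\delta$ and hence carries vanishing energy, so that it suffices to bound $E(u_n,P_n^{\delta_n})$. By H\"older's inequality with exponent $\alpha$,
\[
E(u_n,P_n^{\delta_n})=\int_{P_n^{\delta_n}}|du_n|^2\le\bigl(E_\alpha(u_n)\bigr)^{1/\alpha}\bigl(\mathrm{Area}(P_n^{\delta_n})\bigr)^{(\alpha-1)/\alpha}\le\Lambda^{1/\alpha}\bigl(\mathrm{Area}(P_n^{\delta_n})\bigr)^{(\alpha-1)/\alpha}.
\]
The area computation already carried out in the proof of Theorem \ref{id dh} gives $\mathrm{Area}(P_n^{\delta_n})\le 2l_n/\sqrt{\sinh(l_n/2)}\to 0$ as $l_n\to 0$, and since now the exponent $(\alpha-1)/\alpha$ is a \emph{fixed} positive number, the right-hand side tends to $0$ without any asymptotic assumption on $\alpha$. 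Thus $E(u_n,P_n^{\delta_n})\to 0$, and combining with the thick-part identity gives the desired $\lim_{n\to\infty}E(u_n,h_n,\Sigma_n)=E(u,h,\Sigma)$. Finally, the general case $|J|>1$ follows because, by the thick-thin decomposition, all the collars about the short geodesics are pairwise disjoint, so the preceding argument applies to each subcollar independently.

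The only real point requiring care --- and what I would call the main (minor) obstacle --- is justifying the uniform $C^\infty_{loc}$ estimates on the thick part for a fixed $\alpha>1$ into an arbitrary compact target $N$ (no curvature assumption), since the problem is no longer semilinear and one must rely on De Giorgi--Nash--Moser/Schauder-type estimates for the quasilinear system satisfied by $\alpha$-harmonic maps; this is standard once the uniform $L^\infty$ bound is in hand, but it is the only nontrivial regularity input in the argument.
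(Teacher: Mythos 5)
Your proposal is correct and matches the argument the paper only sketches: for Theorems~\ref{id h alpha} and~\ref{id dh alpha} the paper simply remarks that for fixed $\alpha>1$ a sequence of $\alpha$-(Dirac-)harmonic maps has a convergent subsequence without bubbles (citing the corresponding compactness result for fixed $\alpha$) and that the proof proceeds as in Theorems~\ref{id dh}/\ref{id h}, and you have spelled out precisely those details. In particular, you correctly identify the two points where holding $\alpha>1$ fixed pays off: the absence of bubbling on the thick part (so no curvature assumption on $N$ and no appeal to the sphere-removability constant $\epsilon_1$ is needed), and, on the thin part, the H\"older exponent $(\alpha-1)/\alpha$ being a fixed positive number so that $\mathrm{Area}(P_n^{\delta_n})\to 0$ alone suffices and no asymptotic condition of the form~\eqref{choice of alpha} is required. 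The thin-part estimate, the symmetric-difference argument comparing $\Sigma_n\setminus\tau_n^{-1}(\Sigma^\delta)$ with $P_n^\delta$, and the reduction of $|J|>1$ to disjoint subcollars are all used exactly as in the paper's proof of Theorem~\ref{id dh}.
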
 

 \begin{thm}\label{id dh alpha}
Let $(M_n,h_n,c_n,\mathfrak{S}_n)$ be a sequence of closed hyperbolic surfaces of genus $g>1$ degenerating to a hyperbolic Riemann surface $(M,h,c,\mathfrak{S})$ by collapsing finitely many pairwise disjoint simple closed geodesics $\{\gamma_n^j, j\in J\}$. Denote by $l_n^j$ the length of $\gamma_n^j$ and $\l_n:=\max\limits_{j\in J}\{l_n^j\}$. For each $n$, suppose that $(u_n,\psi_n)$ is a $\alpha(>1)$-Dirac-harmonic map from $(M_n,h_n,c_n,\mathfrak{S}_n)$ into a fixed compact manifold $N$ which  satisfies
\begin{equation}
E_{\alpha}(u_n)+E(\psi_n)\leq\Lambda,
\end{equation}
 for some positive constant $\Lambda$. Then there exists an $\alpha$-Dirac-harmonic map $(u,\psi):(M,h,c,\mathfrak{S})\to N$ such that, after  selection of a subsequence,
 \begin{equation}
 (u_n,\psi_n)\to(u,\psi) \ \text{in} \ C^{\infty}_{loc}(M)\times C^{\infty}_{loc}(M)
 \end{equation}
 and 
 \begin{equation}
 \lim_{n\to\infty}E(u_n,h_nM_n)=E(u,h,M),
 \end{equation}
  \begin{equation}
 \lim_{n\to\infty}E(\psi_n,h_n,M_n)=E(\psi,h,M).
 \end{equation}
 \end{thm}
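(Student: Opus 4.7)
The plan is to parallel the proof of Theorem~\ref{id dh} while exploiting two simplifications afforded by keeping $\alpha$ fixed and strictly greater than $1$. First, the fixed-$\alpha$ interior compactness result of~\cite{jost2019alpha} provides $C^\infty_{\mathrm{loc}}$ subconvergence of any sequence of $\alpha$-Dirac-harmonic maps with no bubbles, so the nonpositive-curvature assumption on $N$ is not needed. Second, the Euler--Lagrange system is strictly subcritical: together with the uniform $L^{2\alpha}$-bound on $\nabla u_n$, the Dirac equation $\slashed\partial \psi_n = -\mathcal{S}(du_n(e_\beta), e_\beta\cdot \psi_n)$ lets a single elliptic bootstrap upgrade $\psi_n$ from $L^4$ to $L^{q}$ with $q = 4\alpha/(2-\alpha) > 4$, without recourse to a spectral gap.

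I would first pull back via the degeneration diffeomorphisms $\tau_n$ to obtain a sequence $(\bar u_n, \bar v_n):(M, \bar h_n, \bar c_n, \mathfrak{S}) \to N$ of $\alpha$-Dirac-harmonic maps with $(\bar h_n,\bar c_n) \to (h,c)$ in $C^\infty_{\mathrm{loc}}(M)$. On each thick part $M^\delta$ the metrics are $C^\infty$-controlled and the injectivity radius is bounded below, so the fixed-$\alpha$ interior compactness produces a subsequence converging in $C^\infty(M^\delta)$; diagonalizing along $\delta_n\searrow 0$ gives $C^\infty_{\mathrm{loc}}(M)$-convergence to an $\alpha$-Dirac-harmonic map $(u,\psi)$. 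The energy identity then reduces to showing that $E(u_n, P_n^{\delta_n})\to 0$ and $E(\psi_n, P_n^{\delta_n})\to 0$ on the degenerating collars. Using the area bound $\mathrm{Area}(P_n^{\delta_n})\leq 2l_n/\sqrt{\sinh(l_n/2)}\to 0$ from Theorem~\ref{id dh}, H\"older's inequality yields
\begin{align*}
E(u_n, P_n^{\delta_n}) &\leq \Lambda^{1/\alpha}\,\mathrm{Area}(P_n^{\delta_n})^{1-1/\alpha} \to 0, \\
E(\psi_n, P_n^{\delta_n}) &\leq \|\psi_n\|_{L^q}^4\,\mathrm{Area}(P_n^{\delta_n})^{1-4/q} \to 0,
\end{align*}
and the case $|J|>1$ is treated by handling each collar separately.

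The main obstacle is producing the uniform bound for $\|\psi_n\|_{L^q(P_n^{\delta_n})}$ without assuming $\lambda_1(h_n)\geq c_0$. The plan here is to carry out the bootstrap locally on balls isometric to balls in the hyperbolic plane $\mathbb H^2$: by homogeneity of $\mathbb H^2$, the Calder\'on--Zygmund constants for $\slashed\partial$ on balls of fixed hyperbolic radius are universal, so on covering the collar by such balls, the pointwise bound $|\slashed\partial\psi_n| \leq C|\nabla u_n||\psi_n|$ together with $\nabla u_n\in L^{2\alpha}$ and $\psi_n\in L^4$ puts $\slashed\partial\psi_n$ uniformly into $L^{p}$ with $p=4\alpha/(\alpha+2)$, and the local Sobolev embedding then gives $\psi_n\in L^q$ with $q=4\alpha/(2-\alpha)>4$. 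The fixed margin $\alpha>1$ is precisely what forces $q>4$, which is what allows the collar estimate to close and bypasses the global spectral estimate used in the $\alpha_n\searrow 1$ setting of Theorem~\ref{id dh}.
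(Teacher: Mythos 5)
The overall decomposition (pull back via $\tau_n$, converge on thick parts by fixed-$\alpha$ compactness, kill the collar energy by H\"older against the shrinking hyperbolic area) is in line with what the paper leaves implicit, and the map-part estimate $E(u_n,P_n^{\delta_n})\leq \Lambda^{1/\alpha}\,\mathrm{Area}(P_n^{\delta_n})^{1-1/\alpha}\to 0$ is indeed clean once $\alpha>1$ is fixed. However, the step you identify as the main obstacle --- the uniform $L^q$ bound for $\psi_n$ on the collar without the spectral gap --- is not repaired by the argument you propose, and this is a genuine gap.

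The claim that the collar can be covered by ``balls isometric to balls in the hyperbolic plane $\mathbb H^2$'' of a fixed hyperbolic radius, with universal Calder\'on--Zygmund constants, fails precisely where it is needed. On the $\delta_n$-thin collar $P_n^{\delta_n}$ the injectivity radius of $(M_n,h_n)$ is at most $\delta_n\to 0$ and at least $l_n/2\to 0$ (see \eqref{inj} and \eqref{delta thin}), so for $n$ large there are no embedded hyperbolic balls of any fixed radius $r_0>0$ there; balls of radius $\gtrsim\mathrm{inj}$ wrap around the short geodesic. If one instead uses balls of radius comparable to the injectivity radius, the elliptic constants blow up like inverse powers of the radius, and no uniform local $L^q$ estimate is obtained. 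The natural fix --- pass to the conformally flat cylinder $(P_n^{\delta_n},dt^2+d\theta^2)$ where $\slashed\partial$ has uniform local estimates --- does not rescue the argument either: the spinor $L^4$ energy is conformally invariant, but the collar H\"older trick $E(\psi_n,P_n^{\delta_n})\leq\|\psi_n\|_{L^q}^4\,\mathrm{Area}(P_n^{\delta_n})^{1-4/q}$ requires the $L^q$ norm and the area to be taken with the \emph{same} metric. With the hyperbolic metric the area tends to $0$ but the local elliptic constants degenerate; with the flat metric the local estimates are uniform but $\mathrm{Area}(P_n^{\delta_n},g_0)\to\infty$, and the conversion between $\|\psi_n\|_{L^q(g_n)}$ and $\|\tilde\psi_n\|_{L^q(g_0)}$ for $q>4$ carries the unfavorable weight $\lambda_n^{2-q/2}\to\infty$. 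So the proposed local bootstrap neither delivers the $L^q(g_n)$ bound nor allows you to run H\"older in the flat metric.

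More fundamentally, an $L^q$ bound for $\psi_n$ cannot be expected from the Dirac equation alone on the degenerating collar without some spectral control: the spinor equation $\slashed\partial_{g_0}\tilde\psi_n=\mathcal S(du_n,e_\beta\cdot\tilde\psi_n)$ has right-hand side tending to $0$ in $L^p$ on the collar (since $\|\nabla_{g_0}u_n\|_{L^{2\alpha}(g_0)}\to 0$ when $\alpha>1$), so $\tilde\psi_n$ is approximately $\slashed\partial_{g_0}$-harmonic on a longer and longer cylinder. Whether such approximately harmonic spinors can carry $L^4$ energy on the neck is governed by the spin structure on the cylinder and the low-lying Dirac spectrum; for a Ramond-type collar a bounded-energy harmonic spinor can concentrate there, which is exactly the phenomenon that the hypothesis $\lambda_1(h_n)\geq c_0$ in Theorem~\ref{id dh} is designed to exclude. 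Your proposal implicitly needs that same control and does not supply a substitute. I would either retain a spectral-gap (or Neveu--Schwarz) hypothesis, or replace the global $L^q$/H\"older step on the collar by a genuine neck analysis of the spinor in cylinder coordinates as in the references cited by the paper.
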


\subsection{Existence}
Theorems \ref{id dh} and \ref{id h} yield an existence result for (Dirac-)harmonic maps from degenerating surfaces if the corresponding assumptions are satisfied. In this subsection, we will realize those assumptions based on the existence results in \cite{ammann2013dirac} and \cite{sacks1981existence}.

For $\alpha$-Dirac-harmonic maps from a closed surface, we have the following existence result:
\begin{thm}\label{uncoupled alpha DH map}
Let $M$ be a closed spin surface and $N$ a compact manifold. Suppose there exists a map $u_0\in C^{2+\mu}(M,N)$ for some $\mu\in(0,1)$ such that 
\begin{equation}
{\rm ind}_{u_0^*TN}(M)=[{\rm dim}_{\mathbb{H}}{\rm ker}\slashed{D}^{u_0}]_{\mathbb{Z}_2}\neq0.
\end{equation} 
Then for any $\alpha\geq1$ and any $\alpha$-harmonic map $u_\alpha$ in the homotopy class $[u_0]$, there exists a nontrivial smooth $\alpha$-Dirac-harmonic map $(u_\alpha,\psi_\alpha)$ such that $\|\psi_\alpha\|_{L^2}=1$. 
\end{thm}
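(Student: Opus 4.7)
The plan is to follow the \emph{uncoupled} existence scheme of Ammann--Ginoux, adapted to the $\alpha$-setting. The given data is a smooth $\alpha$-harmonic map $u_\alpha$ in the homotopy class $[u_0]$; the task is to produce a nonzero smooth spinor $\psi_\alpha$ along $u_\alpha$ of unit $L^2$-norm with $\slashed{D}^{u_\alpha}\psi_\alpha=0$, so that $(u_\alpha,\psi_\alpha)$ solves the uncoupled $\alpha$-Dirac-harmonic map system.

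First, I would invoke the homotopy invariance of the $\alpha$-genus. The bundle $u^*TN\to M$ depends, up to smooth isomorphism, only on the class $[u]$, and the twisted Dirac operators $\slashed{D}^u$ obtained along a smooth homotopy $t\mapsto u_t$ form a continuous family of elliptic operators on a fixed Hilbert bundle. Their $KO$-theoretic index class is therefore constant along the homotopy, and in dimension $2$ this class is precisely $\mathrm{ind}_{u^*TN}(M)=[\dim_{\mathbb{H}}\ker\slashed{D}^u]_{\mathbb{Z}_2}$. Consequently,
\begin{equation*}
\mathrm{ind}_{u_\alpha^*TN}(M)=\mathrm{ind}_{u_0^*TN}(M)\neq 0,
\end{equation*}
so that $\dim_{\mathbb{H}}\ker\slashed{D}^{u_\alpha}\geq 1$.

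Second, I would pick any $\psi_\alpha\in\ker\slashed{D}^{u_\alpha}$ and rescale it so that $\|\psi_\alpha\|_{L^2}=1$. By construction $\psi_\alpha\neq 0$ and $\slashed{D}^{u_\alpha}\psi_\alpha=0$; combined with the $\alpha$-harmonicity of $u_\alpha$, the pair $(u_\alpha,\psi_\alpha)$ is the sought nontrivial uncoupled $\alpha$-Dirac-harmonic map. For smoothness, elliptic regularity applied to the quasilinear $\alpha$-harmonic equation upgrades $u_\alpha$ from $C^{2+\mu}$ to $C^\infty(M,N)$; then $\slashed{D}^{u_\alpha}$ has smooth coefficients on a smooth bundle, and bootstrapping with $L^p$ and Schauder estimates for Dirac-type operators (cf.\ Lemma \ref{spinor Lq norm}) gives $\psi_\alpha\in C^\infty(\Sigma M\otimes u_\alpha^*TN)$.

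The main obstacle is the homotopy-invariance step: one must identify $[\dim_{\mathbb{H}}\ker\slashed{D}^u]_{\mathbb{Z}_2}$ with a $KO$-theoretic index that is stable under deformation of $u$, and extract nontriviality of the kernel from nontriviality of the mod-$2$ index. This is exactly the content of the index-theoretic framework of Ammann--Ginoux, and it is the only nontrivial input beyond the $\alpha$-harmonic theory of Sacks--Uhlenbeck; the analytic portion of the proof (choosing $\psi_\alpha$, normalizing, and bootstrapping regularity) is then essentially immediate. A secondary point to verify is simply that the notion of $\alpha$-Dirac-harmonic map intended here is the uncoupled one (the map part $\alpha$-harmonic, the spinor part a harmonic spinor along it), which is consistent with the usage in \cite{ammann2013dirac}.
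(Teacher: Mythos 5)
Your proposal correctly handles the homotopy invariance of the $\alpha$-genus and the regularity bootstrap, but the central analytic step is missing: you assert that picking \emph{any} $\psi_\alpha\in\ker\slashed{D}^{u_\alpha}$ and combining it with the $\alpha$-harmonicity of $u_\alpha$ yields an $\alpha$-Dirac-harmonic map. This is false in general. An $\alpha$-Dirac-harmonic map is a critical point of the full functional $L^\alpha(u,\psi)=\tfrac12\int_M(1+|du|^2)^\alpha+\tfrac12\int_M\langle\psi,\slashed{D}^u\psi\rangle$, and its Euler--Lagrange equation in the $u$-direction is not ``$u$ is $\alpha$-harmonic''; it is the $\alpha$-tension equation coupled to the curvature term $\mathcal{R}(u,\psi)=\tfrac12 R^m_{lij}\langle\psi^i,\nabla u^l\cdot\psi^j\rangle\,\partial_{y^m}$ (compare \eqref{geldh1}). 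For a pair $(u_\alpha,\psi_\alpha)$ with $u_\alpha$ $\alpha$-harmonic and $\psi_\alpha\in\ker\slashed{D}^{u_\alpha}$ to satisfy the full system, one must additionally verify that $\mathcal{R}(u_\alpha,\psi_\alpha)=0$; the label ``uncoupled'' means precisely that this extra condition holds, not that the condition is dropped.

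The paper's proof is structured exactly around supplying this missing piece and splits into two cases. If $\dim_{\mathbb{H}}\ker\slashed{D}^{u_\alpha}=1$ (minimal kernel), then Proposition 8.2 of \cite{ammann2013dirac} shows that for \emph{any} $\psi$ in the kernel the curvature term vanishes and $(u_\alpha,\psi)$ is a critical point of $L^\alpha$. If the kernel is non-minimal, one cannot use an arbitrary $\psi$; instead one invokes the $\mathbb{Z}_2$-grading $G\otimes\mathrm{id}$ of $\Sigma M\otimes u^*TN$ to decompose $\psi=\psi^++\psi^-$ into chiral parts, observes that $(\slashed{D}^{u_s}\psi_s^\pm,\psi_s^\pm)_{L^2}$ has vanishing first $s$-derivative (because $\Sigma^\pm M$ are parallel and the pairing is orthogonal), and then applies the first variation formula of $L^\alpha$ (as in Corollary 5.2 of \cite{ammann2013dirac}) to conclude that $(u_\alpha,\psi_\alpha^\pm)$ are $\alpha$-Dirac-harmonic; one of the two chiral parts is nonzero and can be $L^2$-normalized. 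Your write-up collapses both cases into a single unjustified claim, and in the non-minimal case no choice of a generic kernel element will make $\mathcal{R}(u_\alpha,\psi_\alpha)$ vanish. You would need to either restrict to the minimal-kernel case and cite the analogue of Ammann--Ginoux Proposition 8.2, or reproduce the chirality argument.
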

 The proof is similar to the one in \cite{ammann2013dirac} for Dirac-harmonic maps, see also \cite{jost2019short}.
 
 \begin{proof}
 If $\slashed{D}^{u_\alpha}$ has minimal kernel, that is, ${\rm dim}_{\mathbb{H}}{\rm ker}\slashed{D}^{u_0}=1$, then for any $\psi\in{\rm ker}{\slashed{D}^{u_\alpha}}$, $(u_\alpha,\psi)$ is an $\alpha$-Dirac-harmonic map by Proposition 8.2 in \cite{ammann2013dirac} (see also \cite{jost2019short} for a proof by the heat flow). If $\slashed{D}^{u_\alpha}$ has non-minimal kernel, we use the decomposition of the twisted spinor bundle through the $\mathbb{Z}_2$-grading $G\otimes{\rm id}$ (see \cite{ammann2013dirac}). More precisely, for any smooth variation $(u_s)_{s\in(-\epsilon,\epsilon)}$ of $u_0$, we split the bundle $\Sigma M\otimes u_s^*TN$ into $\Sigma M\otimes u_s^*TN=\Sigma^+M\otimes u_s^*TN\oplus\Sigma^-M\otimes u_s^*TN$, which is orthogonal in the complex sense and parallel. Consequently, for any $\psi_0\in{\rm ker}{\slashed{D}^{u_0}}$, we have
\begin{equation} 
(\slashed{D}^{u_0}\psi_0^+,\psi_0^+)_{L^2}=(\slashed{D}^{u_0}\psi_0^-,\psi_0^-)_{L^2}=0
\end{equation}
for $\psi_0=\psi_0^++\psi_0^-$, where $\psi_0^{\pm}=\psi_{\pm}\otimes u_0^*TN$ and $\psi_{\pm}\in\Sigma^{\pm}$. Therefore, $\psi_s^{\pm}:=\psi_{\pm}\otimes u_s^*TN$ are smooth variations of $\psi_0^\pm$, respectively, such that 
\begin{equation}
\frac{d}{dt}\bigg|_{t=0}(\slashed{D}^{u_s}\psi_s^\pm,\psi_s^\pm)_{L^2}=0.
\end{equation} 
By taking $u_0=u_\alpha$ and $\psi_0=\psi_\alpha\in{\rm ker}{\slashed{D}^{u_\alpha}}$, the first variation formula of $L^\alpha$ implies that $(u_\alpha,\psi_\alpha^\pm)$ are $\alpha$-Dirac-harmonic maps (see Corollary 5.2 in \cite{ammann2013dirac}). In particular, we can choose $\psi_\alpha$ such that $\|\psi_\alpha^+\|_{L^2}=1$ or $\|\psi_\alpha^-\|=1$.
 \end{proof}

Note that ${\rm ind}_{u_0^*TN}(M)$ is independent of the choice of the Riemannian metrics on $M$ and $N$. It is also invariant in the homotopy class $[u_0]$. Combining these facts and the results in the previous subsection, we directly get the following existence results.

 \begin{thm}
Let $(M_n,h_n,c_n,\mathfrak{S}_n)$ be a sequence of closed hyperbolic surfaces of genus $g>1$ degenerating to a hyperbolic Riemann surface $(M,h,c,\mathfrak{S})$ by collapsing finitely many pairwise disjoint simple closed geodesics $\{\gamma_n^j, j\in J\}$. Denote by $l_n^j$ the length of $\gamma_n^j$ and $\l_n:=\max\limits_{j\in J}\{l_n^j\}$. For each $n$, suppose that $u_n$ is a map from $M_n$ into a fixed compact manifold $N$ with nonpositive curvature and satisfies
\begin{equation}
E_{\alpha_n}(u_n)\leq\Lambda,
\end{equation}
\begin{equation}
{\rm ind}_{u_n^*TN}(M_n)\neq0,
\end{equation}
where $\Lambda$ is a positive constant, and $\{\alpha_n\in(1,2)\}$ satisfies
\begin{equation}
\lim_{n\to\infty}\bigg(\frac{2l_n}{\sqrt{\sinh{\frac{l_n}{2}}}}\bigg)^{\alpha_n-1}=0.
\end{equation}
Moreover, we assume that the first positive eigenvalue $ \lambda_1(h_n)$ of the usual Dirac operator $\slashed{\partial}_{h_n}$ stays away from zero, i.e.
 \begin{equation}
 \lambda_1(h_n)\geq c_0
 \end{equation}
 for some positive constant $c_0>0$. Then there exists a Dirac-harmonic map $(u,\psi):(M,h,c,\mathfrak{S})\to N$ such that
  \begin{equation}
\psi\neq0.
 \end{equation}
 In addition, let   the minimizing harmonic map $u_n^{\min}$ in $[u_n]$  satisfy
 \begin{equation}
 \lim_{n\to\infty}E(u_n^{\min})>0.
 \end{equation}
 Then $u$ is not a constant.
 \end{thm}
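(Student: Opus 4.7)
The plan is to combine Theorem \ref{uncoupled alpha DH map} (existence of nontrivial $\alpha$-Dirac-harmonic maps) with Theorem \ref{id dh} (compactness with clean energy identity). First, for each $n$, I would replace $u_n$ by an $E_{\alpha_n}$-minimizer $v_n$ in the homotopy class $[u_n]$; existence and smoothness of $v_n$ follow from the Sacks--Uhlenbeck variational method, since $E_{\alpha_n}$ satisfies the Palais--Smale condition for $\alpha_n > 1$. Then $E_{\alpha_n}(v_n) \leq E_{\alpha_n}(u_n) \leq \Lambda$, and since the $\alpha$-index is homotopy invariant, ${\rm ind}_{v_n^*TN}(M_n) = {\rm ind}_{u_n^*TN}(M_n) \neq 0$. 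Theorem \ref{uncoupled alpha DH map} then produces, for each $n$, a smooth $\alpha_n$-Dirac-harmonic map $(v_n, \phi_n)$ with $\|\phi_n\|_{L^2(M_n)} = 1$.

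Second, I need a uniform bound $E(\phi_n) \leq C$ so that Theorem \ref{id dh} applies to the sequence $(v_n, \phi_n)$. Since $\slashed D^{v_n} \phi_n = 0$, we have $|\slashed\partial_{h_n} \phi_n| \leq C |\nabla v_n| |\phi_n|$, and Lemma \ref{spinor Lq norm} together with the $L^2$ normalization provides $\|\phi_n\|_{L^q(M_n)} \leq C$ for some $q > 4$. The uniformity of the constant in $n$ is precisely where the hypothesis $\lambda_1(h_n) \geq c_0$ enters: it keeps the $W^{1,p}$-elliptic estimates for $\slashed\partial_{h_n}$ uniform as the spin surfaces degenerate. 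In particular $E(\phi_n) = \|\phi_n\|_{L^4}^4$ is uniformly bounded, so Theorem \ref{id dh} yields a smooth Dirac-harmonic map $(u,\psi):(M,h,c,\mathfrak{S}) \to N$ with $(v_n, \phi_n) \to (u, \psi)$ in $C^\infty_{loc}(M)$ and no energy loss at the punctures.

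For $\psi \not\equiv 0$, the point is to transfer the normalization $\|\phi_n\|_{L^2(M_n)} = 1$ to the limit. On the thick part, $C^\infty_{loc}$-convergence and the exhaustion $M^{\delta_n}\nearrow M$ give $\|\phi_n\|_{L^2(\tau_n^{-1}(M^{\delta_n}))}^2 \to \|\psi\|_{L^2(M)}^2$. On the degenerating collars, H\"older's inequality yields
\[
\|\phi_n\|_{L^2(P_n^{\delta_n})}^2 \leq \|\phi_n\|_{L^q(M_n)}^2 \cdot {\rm Area}(P_n^{\delta_n})^{1-2/q},
\]
and the area bound ${\rm Area}(P_n^{\delta_n}) \leq 2l_n/\sqrt{\sinh(l_n/2)} \to 0$ established inside the proof of Theorem \ref{id dh} makes this contribution vanish; the transition region $\Omega_n^{\delta_n}$ is negligible by exactly the argument used there. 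Hence $\|\psi\|_{L^2(M)} = 1$ and $\psi \not\equiv 0$.

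Finally, for the non-constancy of $u$ when $\lim_{n\to\infty} E(u_n^{\min}) > 0$: since $v_n \in [u_n]$ by construction and $u_n^{\min}$ is the energy minimizer in that homotopy class, $E(v_n, h_n, M_n) \geq E(u_n^{\min})$. The map-part energy identity of Theorem \ref{id dh} then gives
\[
E(u, h, M) = \lim_{n\to\infty} E(v_n, h_n, M_n) \geq \liminf_{n\to\infty} E(u_n^{\min}) > 0,
\]
so $u$ is nonconstant. The main obstacle in this plan is the uniform $L^q$-control of the spinor sequence across degenerating spin structures: this is exactly where the spectral gap hypothesis \eqref{1st eigenvalue} is essential, as it rules out the pathology of spinor mass escaping into the shrinking collars and breaks the compactness argument into a clean thick/thin decomposition.
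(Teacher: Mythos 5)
Your proposal is correct and takes essentially the same route as the paper: replace $u_n$ by the $E_{\alpha_n}$-minimizer $v_n$ in $[u_n]$ (Sacks--Uhlenbeck), invoke homotopy-invariance of the index and Theorem~\ref{uncoupled alpha DH map} to produce normalized $\alpha_n$-Dirac-harmonic maps $(v_n,\phi_n)$, check the hypotheses of Theorem~\ref{id dh} (using the spectral gap to make the $L^q$-bound on $\phi_n$ uniform in $n$), and read off non-triviality from the clean energy identities. The one spot where you deviate slightly is the argument for $\psi\not\equiv 0$: you transfer the $L^2$-normalization directly to the limit via H\"older on the thin collars, whereas the paper's terse proof implicitly goes through the $L^4$-energy identity \eqref{cid spinor} combined with the H\"older bound $1=\|\phi_n\|_{L^2}^2 \le \|\phi_n\|_{L^4}^2\,{\rm Area}(M_n)^{1/2}$ (note ${\rm Area}(M_n)=2\pi(2g-2)$ is fixed by Gauss--Bonnet), which gives $E(\phi_n)\ge c(g)>0$ and hence $E(\psi)>0$; both are valid and use the same ingredient, namely that no spinor mass escapes into the collars.
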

 
\begin{proof}
By our assumptions, we get a sequence $\alpha$-Dirac-harmonic maps satisfying the assumption in Theorem \ref{id dh}. Therefore, there exists a Dirac-harmonic map $(u,\psi)$ from $M$ to $N$. Combining the energy identity \eqref{cid map}-\eqref{cid spinor} and Theorem \ref{uncoupled alpha DH map}, we have the non-triviality of $u$ and $\psi$.
\end{proof}

Similarly, based on the existence of $\alpha$-harmonic maps in \cite{sacks1981existence}, we obtain an analogous theorem for harmonic maps.
 \begin{thm}
Let $(\Sigma_n,h_n,c_n)$ be a sequence of closed hyperbolic surfaces of genus $g>1$ degenerating to a hyperbolic Riemann surface $(\Sigma,h,c)$ by collapsing finitely many pairwise disjoint simple closed geodesics $\{\gamma_n^j, j\in J\}$. Denote by $l_n^j$ the length of $\gamma_n^j$ and $\l_n:=\max\limits_{j\in J}\{l_n^j\}$. For each $n$, suppose that $u_n$ is a map from $\Sigma_n$ into a fixed compact manifold $N$ with nonpositive curvature which satisfies
\begin{equation}
E_{\alpha_n}(u_n)\leq\Lambda,
\end{equation}
where $\Lambda$ is a positive constant, and $\{\alpha_n\in(1,2)\}$ satisfies
\begin{equation}
\lim_{n\to\infty}\bigg(\frac{2l_n}{\sqrt{\sinh{\frac{l_n}{2}}}}\bigg)^{\alpha_n-1}=0.
\end{equation}
 Then there exists a harmonic map $u_0:(\Sigma,h,c)\to N$, which can be extended to a smooth harmonic map on $(\bar{\Sigma},\bar{c})$.
 
 In addition, let $u_n^{\min}$ be the minimizing harmonic map in $[u_n]$ and let it satisfy
 \begin{equation}
 \lim_{n\to\infty}E(u_n^{\min})>0.
 \end{equation}
 Then $u_0$ is not a constant.
 \end{thm}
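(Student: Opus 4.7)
The plan is to reduce the problem to the compactness result Theorem \ref{id h}. Since the given maps $u_n$ are not assumed to be $\alpha_n$-harmonic, I would first replace them by $\alpha_n$-harmonic representatives in the same homotopy class, then extract a limit via Theorem \ref{id h}, and finally extend the limit across the punctures.

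For the reduction step, I would invoke the classical Sacks-Uhlenbeck theorem on each closed surface $\Sigma_n$: since $\alpha_n>1$ and $N$ is compact, the functional $E_{\alpha_n}$ satisfies the Palais-Smale condition on $W^{1,2\alpha_n}(\Sigma_n,N)$, and the hypothesis $E_{\alpha_n}(u_n)\leq\Lambda$ forces $u_n\in W^{1,2\alpha_n}$, which is H\"older continuous so that the homotopy class $[u_n]$ is well defined. The infimum of $E_{\alpha_n}$ in $[u_n]$ is attained by a smooth $\alpha_n$-harmonic map $\tilde u_n\in[u_n]$ with $E_{\alpha_n}(\tilde u_n)\leq E_{\alpha_n}(u_n)\leq\Lambda$. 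The sequence $\{\tilde u_n\}$ then satisfies all the hypotheses of Theorem \ref{id h} (recall that $\{\alpha_n\}$ is the same sequence and the bound on $E_{\alpha_n}$ is preserved), which produces a harmonic map $u_0:(\Sigma,h,c)\to N$ with $\tilde u_n\to u_0$ in $C^\infty_{loc}(\Sigma)$ and
\begin{equation*}
\lim_{n\to\infty}E(\tilde u_n,h_n,\Sigma_n)=E(u_0,h,\Sigma)<\infty.
\end{equation*}
Since $u_0$ is a finite-energy harmonic map from a Riemann surface with isolated punctures into a compact target, the classical two-dimensional removable singularity theorem of Sacks-Uhlenbeck gives a smooth harmonic extension to the normalization $(\bar\Sigma,\bar c)$.

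For the non-triviality statement, the nonpositivity of the curvature of $N$ guarantees via Eells-Sampson that a minimizing harmonic map $u_n^{\min}$ exists in $[u_n]$, and by definition $E(\tilde u_n,h_n,\Sigma_n)\geq E(u_n^{\min},h_n,\Sigma_n)$ since $\tilde u_n\in[u_n]$. Passing to the limit and combining with the energy identity above and the hypothesis $\lim_{n\to\infty}E(u_n^{\min})>0$ yields $E(u_0,h,\Sigma)>0$, so $u_0$ is not a constant map. The main analytic work has already been carried out in Theorem \ref{id h}, so the present proof is essentially a routine packaging of Sacks-Uhlenbeck existence, the compactness theorem, and classical removable singularities; there is no new obstacle.
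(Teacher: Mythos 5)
Your proof is correct and follows essentially the same route the paper takes (the paper only sketches this case by analogy with the Dirac-harmonic version): replace $u_n$ by Sacks--Uhlenbeck $E_{\alpha_n}$-minimizers $\tilde u_n\in[u_n]$, feed these into the compactness/energy-identity Theorem~\ref{id h}, invoke removable singularities for the extension to $(\bar\Sigma,\bar c)$, and compare $E(\tilde u_n)\ge E(u_n^{\min})$ with the energy identity to get $E(u_0)>0$. Nothing is missing; the packaging is the intended one.
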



\nocite{*}


\bibliographystyle{amsplain}
\bibliography{reference}

\providecommand{\bysame}{\leavevmode\hbox to3em{\hrulefill}\thinspace}
\providecommand{\MR}{\relax\ifhmode\unskip\space\fi MR }
\providecommand{\MRhref}[2]{%
  \href{http://www.ams.org/mathscinet-getitem?mr=#1}{#2}
}
\providecommand{\href}[2]{#2}
\begin{thebibliography}{10}

\bibitem{ammann2013dirac}
Bernd Ammann and Nicolas Ginoux, \emph{Dirac-harmonic maps from index theory},
  Calculus of Variations and Partial Differential Equations \textbf{47} (2013),
  no.~3-4, 739--762.

\bibitem{bar2000dirac}
Christian B{\"a}r, \emph{The {D}irac operator on hyperbolic manifolds of finite
  volume}, Journal of Differential Geometry \textbf{54} (2000), no.~3,
  439--488.

\bibitem{chen2005regularity}
Qun Chen, J{\"u}rgen Jost, Jiayu Li, and Guofang Wang, \emph{Regularity
  theorems and energy identities for {D}irac-harmonic maps}, Mathematische
  Zeitschrift \textbf{251} (2005), no.~1, 61--84.

\bibitem{chen2006dirac}
\bysame, \emph{Dirac-harmonic maps}, Mathematische Zeitschrift \textbf{254}
  (2006), no.~2, 409--432.

\bibitem{chen2015dirac}
Qun Chen, J{\"u}rgen Jost, Linlin Sun, and Miaomiao Zhu, \emph{Dirac-harmonic
  maps between {R}iemann surfaces}, MPI MIS Preprint: 76/2015 (2015).

\bibitem{eells1964harmonic}
James Eells and Joseph~H Sampson, \emph{Harmonic mappings of {R}iemannian
  manifolds}, American journal of mathematics \textbf{86} (1964), no.~1,
  109--160.

\bibitem{hummel1997gromov}
Christoph Hummel, \emph{Gromov’s compactness theorem for pseudo-holomorphic
  curves, vol. 151}, Progress in Mathematics, Birkh{\"a}user-Verlag, Basel
  (1997).

\bibitem{jost2009geometry}
J{\"u}rgen Jost, \emph{Geometry and physics}, Springer Science \& Business
  Media, 2009.

\bibitem{jost2017riemannian}
\bysame, \emph{Riemannian geometry and geometric analysis}, Springer, 2017.

\bibitem{jost2018geometric}
J{\"u}rgen Jost, Lei Liu, and Miaomiao Zhu, \emph{Geometric analysis of a mixed
  elliptic-parabolic conformally invariant boundary value problem}, MPI MIS
  Preprint: 41/2018 (2018).

\bibitem{jost2019alpha}
J{\"u}rgen Jost and Jingyong Zhu, \emph{$\alpha$-{D}irac-harmonic maps from
  closed surfaces}, arXiv preprint arXiv:1903.07927 (2019).

\bibitem{jost2019short}
\bysame, \emph{Short-time existence of the $\alpha$-{D}irac-harmonic map flow
  and applications}, arXiv preprint arXiv:1912.11268 (2019).

\bibitem{lawson1989spin}
H~Blaine Lawson and Marie~Louise Michelsohn, \emph{Spin geometry},  (1989).

\bibitem{nash1956imbedding}
John Nash, \emph{The imbedding problem for {R}iemannian manifolds}, Annals of
  mathematics (1956), 20--63.

\bibitem{pfaffle2005eigenvalues}
Frank Pf{\"a}ffle, \emph{Eigenvalues of {D}irac operators for hyperbolic
  degenerations}, manuscripta mathematica \textbf{116} (2005), no.~1, 1--29.

\bibitem{sacks1981existence}
Jonathan Sacks and Karen Uhlenbeck, \emph{The existence of minimal immersions
  of 2-spheres}, Ann. of Math \textbf{113} (1981), no.~1, 1--24.

\bibitem{zhao2007energy}
Liang Zhao, \emph{Energy identities for {D}irac-harmonic maps}, Calculus of
  Variations and Partial Differential Equations \textbf{28} (2007), no.~1,
  121--138.

\bibitem{zhu2009dirac}
Miaomiao Zhu, \emph{Dirac-harmonic maps from degenerating spin surfaces {I}:
  the {N}eveu--{S}chwarz case}, Calculus of Variations and Partial Differential
  Equations \textbf{35} (2009), no.~2, 169--189.

\bibitem{zhu2010harmonic}
\bysame, \emph{Harmonic maps from degenerating {R}iemann surfaces},
  Mathematische Zeitschrift \textbf{264} (2010), no.~1, 63.

\end{thebibliography}

\end{document}